\newtheorem{lem}{Lemma}[section]
\newtheorem{prop}[lem]{Proposition}
\newtheorem{cor}[lem]{Corollary}
\newtheorem{teorema}[lem]{Theorem}
\newtheorem{remark}{Remark}
\newtheorem{result}{Result}
\newcommand{\F}{\mathbb{F}_q}
\newcommand{\FF}{\mathbb{F}_{q^2}}
\newcommand{\cC}{\mathcal{C}}
\newcommand{\cD}{\mathcal{D}}
\newcommand{\cU}{\mathcal{U}}
\newcommand{\cX}{\mathcal{X}}
\newcommand{\cY}{\mathcal{Y}}
\newcommand{\cZ}{\mathcal{Z}}
\newcommand{\K}{\mathbb{K}}
\def\ord{\text{ord}}
\def\aut{\mbox{\rm Aut}}
\def\bx{\bar{x}}
\def\by{\bar{y}}
\def\bz{\bar{z}}
\def\Fc{\mathbb{F}_{q}\setminus\lbrace 0 \rbrace}
\def\gg{\mathfrak{g}}
\title{On the Dickson-Guralnick-Zieve curve}
\author{Massimo Giulietti\footnote{Massimo Giulietti: massimo.giulietti@unipg.it 
Dipartimento di Matematica ed Informatica,- Universit\`{a} di Perugia- Via Vanvitelli - 60123 Perugia (Italy).}
\newline \and
G\'abor Korchm\'aros, \footnote{G\'abor Korchm\'aros: gabor.korchmaros@unibas.it Dipartimento di Matematica, Informatica ed Economia - Universit\`{a} degli Studi della Basilicata - Viale dell'Ateneo Lucano 10 - 85100 Potenza (Italy)}
\and
Marco Timpanella \footnote{Marco Timpanella: marco.timpanella@unibas.it
Dipartimento di Matematica, Informatica ed Economia - Universit\`{a} degli Studi della Basilicata - Viale dell'Ateneo Lucano 10 - 85100 Potenza (Italy)}}
\date{}
\begin{document}
\maketitle

\vspace{\baselineskip}
\noindent\textbf{Keywords:} Algebraic curves, Finite fields, Automorphism groups.

\noindent\textbf{Mathematics Subject Classifications:} 11G20, 14H37, 14H05.

  \begin{abstract} The Dickson-Guralnick-Zieve curve, briefly DGZ curve, defined over the finite field $\mathbb{F}_q$ arises naturally from the classical Dickson invariant of the projective linear group $PGL(3,\mathbb{F}_q)$. The DGZ curve is an (absolutely irreducible, singular) plane curve of degree $q^3-q^2$ and genus $\frac{1}{2}q(q-1)(q^3-2q-2)+1.$ In this paper we show that the DGZ curve has several remarkable features, those appearing most interesting are: the DGZ curve has a large automorphism group compared to its genus albeit its Hasse-Witt invariant is positive; the Fermat curve of degree $q-1$ is a quotient curve of the DGZ curve; among the plane curves with the same degree and genus of the DGZ curve and defined over $\mathbb{F}_{q^3}$, the DGZ curve is optimal with respect the number of its $\mathbb{F}_{q^3}$-rational points.
  \end{abstract}

\section{Introduction}
The classical Dickson invariant of the projective linear group $PGL(3,\mathbb{F}_q)$ with $q=p^h$, $p$ prime, is the (absolutely irreducible) homogeneous polynomial $F(x,y,z)\in \mathbb{F}_q(x,y,z)$ given by
$F(x,y,z)=D_1(x,y,z)/D_2(x,y,z)$ where
$$\begin{array}{ll} D_1(x,y,z)=\begin{vmatrix} x & x^q & x^{q^3} \\ y & y^q & y^{q^3} \\ z & z^q & z^{q^3} \end{vmatrix} , \quad D_2(x,y,z)=\begin{vmatrix} x & x^q & x^{q^2} \\ y & y^q & y^{q^2} \\ z & z^q & z^{q^2} \end{vmatrix};\end{array}$$ see \cite{Dickson}.
In geometric terms, the plane curve $\cC$ of projective equation $F(x,y,z)=0$ has an automorphism group $G\cong PGL(3,\mathbb{F}_q)$. In the early 2000s, Guralnick and Zieve pointed out
that $G$ is quite large compared to the genus $\gg$; more precisely $|G|\approx c\gg^{8/5}$, that is, $8/5$ is the amplitude of $|G|$ (with respect to $\gg$); see \cite{gz,KR}. Among the curves with positive Hasse-Witt invariant, $\cC$ is still the unique known example with an automorphism group whose amplitude is as high as (or higher than) $8/5$.
For $q=p$, $\cC$ is ordinary and in this case the amplitude appears exceptionally high, as $8/5$ is far away from the maximum amplitude $3/2$ that a solvable automorphism group of an ordinary curve may have; see \cite{KM}.

In this paper we call $\cC$ the Dickson-Guralnick-Zieve curve, briefly DGZ curve, and show several properties concerning its automorphisms, quotient curves, and the number of its points. In the smallest case $q=2$, the DGZ curve is isomorphic over $\mathbb{F}_8$ to the well known Klein quartic; see Remark \ref{remA16mar2018}. From now we assume $q\ge 3$.

In Section \ref{secDGZ} we show that $\cC$ is an absolutely irreducible plane curve of degree $d=q^3-q^2$. We prove that the singular points of $\cC$ are exactly the points of $PG(2,\mathbb{F}_{q^2})\setminus PG(2,\mathbb{F}_{q})$. Each of them is a ($q-1$)-fold point and it is the center of a unique branch of $\cC$. This shows that there is a one-to-one correspondence between the points of $\cC$ and those of a nonsingular model $\cX$ of $\cC$. In particular, for any $i\geq 1$, the number of points of $\cC$ in $P(2,\mathbb{F}_{q^i})$  equals $|\cX(\mathbb{F}_{q^i})|$ where $\cX(\mathbb{F}_{q^i})$ is the set of all $\mathbb{F}_{q^i}$-rational points of $\cX$. We also find the exact value of $|\cX(\mathbb{F}_{q^i})|$ for $i=1,2,3$,
which are $0,q^4-q, q^6-q^5-q^4+q^3$, but it remains open the problem to compute $|\cX(\mathbb{F}_{q^i})|$ for $i\geq 4$, and more general, the $L$-polynomial of the DGZ curve.

In Section \ref{secaut}, we look inside the action and the ramification groups of a Sylow $p$-subgroup of $G$. The results collected are used in Section \ref{secquotient} to
find the genus of $\cC$ which is $\gg=\frac{1}{2}q(q-1)(q^3-2q-2)+1$, and
show that the quotient curve of $\cC$ arising from a Sylow $p$-subgroup of $G$ is isomorphic to the Fermat curve $\mathcal{F}_{q-1}$ of equation $x^{q-1}+y^{q-1}+z^{q-1}=0$.

In Section \ref{secfullaut} we prove that $G$ is the full automorphism group of the DGZ-curve. Our proof does not depend on deeper results from Group theory, as it only uses the Hurwitz genus formula and the classification of maximal subgroups of $PSL(3,q)$ due to Mitchell \cite{mitchell1911} for odd $q$, and to Hartley \cite{hartley1925} for even $q$.

In Section \ref{sectionnc} we work out the geometry of the DGZ curve. We show that $\cC$ is non-classical in the sense that each nonsingular point of $\cC$ is a flex. We also show that $\cC$ is  $\mathbb{F}_q$-Frobenius non-classical, that is, the tangent at any nonsingular point of $\cC$ contains the image of the point by the $\mathbb{F}_q$-Frobenius map $\Phi_q$ which takes the point
$P=(a:b:c)$ to $P^q=(a^q:b^q:c^q)$. Frobenius non-classicality of $\cC$ remains valid when $q$ is replaced by $q^3$.
Furthermore, we compute the orders $v_P(R)$ and $v_P(S)$ of every $P\in \cC$, for both the ramification divisor $R$ and the St\"ohr-Voloch divisor $S$. This allows us to answer positively the natural question whether $\cC$ has many $\mathbb{F}_{q^3}$-rational points compared to other plane curves defined over $\mathbb{F}_{q^3}$ with the same degree and genus of $\cC$. For this purpose,
the term of $(d,\gg,i)$-optimal curve is useful for curves whose number of $\mathbb{F}_{q^i}$-rational points is big as possible, in the family of all (absolutely irreducible) plane curves of degree $d$ and genus $\gg$ defined over $\mathbb{F}_{q^i}$. With this terminology, the affirmative answer is given by Corollary \ref{cor24mar2018} which states that DGZ-curve is indeed $(q^3-q^2,\frac{1}{2}q(q-1)(q^3-2q-2)+1),3)$-optimal. The above question also has a combinatorial analog where one takes all $\mathbb{F}_{q^i}$-points of the curve, that is, the (possibly singular) points of the curve lying in $PG(2,\mathbb{F}_{q^i})$. Here again, the DGZ-curve is $(q^3-q^2,\frac{1}{2}q(q-1)(q^3-2q-2)+1),3)$-optimal.

\section{Background on automorphisms of curves}
In this section, $\cX$ stands for a (projective, nonsingular, geometrically irreducible, algebraic) curve defined  over an algebraically closed field $\K$ of positive characteristic $p$. Since we work with plane curves, we consider $\cX$ as a nonsingular model of an absolutely irreducible plane curve $\cC$ defined over $\K$. Doing so, we set $\gg(\cC)=\gg(\cX)$ for the genus of $\cX$, $\K(\cX)=\K(\cC)$ for the function field of $\cX$, and $\aut(\cX)=\aut(\cC)$ for the automorphism group of  $\cX$ which fixes $\K$ element-wise.

By a result due to Schmid, $\aut(\cX)$ is finite; see \cite[Theorem 11.50]{HKT}. For a subgroup $G$ of $\aut(\cX)$, let $\bar \cX$ denote a nonsingular model of $\K(\cX)^G$, that is,
a projective nonsingular geometrically irreducible algebraic
curve with function field $\K(\cX)^G$, where $\K(\cX)^G$ consists of all elements of $\K(\cX)$
fixed by every element in $G$. Usually, $\bar \cX$ is called the
quotient curve of $\cX$ by $G$ and denoted by $\cX/G$. The field extension $\K(\cX)|\K(\cX)^G$ is  Galois of degree $|G|$.

Since our approach is mostly group theoretical, we give interpretation of concepts from Function field theory in terms of Group theory.

Let $\Phi$ be the cover of $\cX\mapsto \bar{\cX}$ where $\bar{\cX}=\cX/G$ is a quotient curve of $\cX$ with respect to $G$.
 A point $P\in\cX$ is a ramification point of $G$ if the stabilizer $G_P$ of $P$ in $G$ is nontrivial; the ramification index $e_P$ is $|G_P|$. A point $\bar{Q}\in\bar{\cX}$ is a branch point of $G$ if there is a ramification point $P\in \cX$ such that $\Phi(P)=\bar{Q}$; the ramification (branch) locus of $G$ is the set of all ramification (branch) points. The $G$-orbit of $P\in \cX$ is the subset of $\cX$
$o=\{R\mid R=g(P),\, g\in G\}$, and it is {\em long} if $|o|=|G|$, otherwise $o(P)$ is {\em short}. For a point $\bar{Q}$, the $G$-orbit $o$ lying over $\bar{Q}$ consists of all points $P\in\cX$ such that $\Phi(P)=\bar{Q}$. If $P\in o$ then $|o|=|G|/|G_P|$ and hence $\bar{Q}$ is a branch point if and only if $o$ is a short $G$-orbit. It may be that $G$ has no short orbits. This is the case if and only if every nontrivial element in $G$ is fixed--point-free on $\cX$, that is, the cover $\Phi$ is unramified. On the other hand, $G$ has a finite number of short orbits. For a non-negative integer $i$, the $i$-th ramification group of $\cX$
at $P$ is denoted by $G_P^{(i)}$ (or $G_i(P)$ as in \cite[Chapter
IV]{serre1979})  and defined to be
$$G_P^{(i)}=\{g\mid \ord_P(g(t)-t)\geq i+1, g\in
G_P\}, $$ where $t$ is a uniformizing element (local parameter) at
$P$. Here $G_P^{(0)}=G_P$.
The structure of $G_P$ is well known; see for instance \cite[Chapter IV, Corollary 4]{serre1979} or \cite[Theorem 11.49]{HKT}.
\begin{result}
\label{res74} The stabilizer $G_P$ of a point $P\in \cX$ in $G$ has the following properties.
\begin{itemize}
\item[\rm(i)] $G_P^{(1)}$ is the unique normal $p$-subgroup of $G_P$;
\item[\rm(ii)] For $i\ge 1$, $G_P^{(i)}$ is a normal subgroup of $G_P$ and the quotient group $G_P^{(i)}/G_P^{(i+1)}$ is an elementary abelian $p$-group.
\item[\rm(iii)] $G_P=G_P^{(1)}\rtimes U$ where the complement $U$ is a cyclic whose order is prime to $p$.
\end{itemize}
\begin{result}
\label{resnaka}
Let $G$ be a subgroup of $\aut(\cX)$. For $P\in \cX$ put $e=|G_P/G_P^{(1)}|$ and $d=|G_P^{(1)}/G_P^{(2)}|$. Then $e$ divides $d-1$.
\end{result}

\end{result}
Let $\bar{\gg}$ be the genus of the quotient curve $\bar{\cX}=\cX/G$. The Hurwitz
genus formula  gives the following equation
    \begin{equation}
    \label{eq1}
2\gg-2=|G|(2\bar{\gg}-2)+\sum_{P\in \cX} d_P.
    \end{equation}
    where
\begin{equation}
\label{eq1bis}
d_P= \sum_{i\geq 0}(|G_P^{(i)}|-1).
\end{equation}
Here $D(\cX|\bar{\cX})=\sum_{P\in\cX}d_P$ is the {\emph{different}}. For a tame subgroup $G$ of $\aut(\cX)$, that is for $p\nmid |G_P|$,
$$\sum_{P\in \cX} d_P=\sum_{i=1}^m (|G|-\ell_i)$$
where $\ell_1,\ldots,\ell_m$ are the sizes of the short orbits of $G$.

A subgroup of $\aut(\cX)$ is a $p'$-group (or a prime to $p$) group if its order is prime to $p$. A subgroup $G$ of $\aut(\cX)$ is {\em{tame}} if the $1$-point stabilizer of any point in $G$ is $p'$-group. Otherwise, $G$ is {\em{non-tame}} (or {\em{wild}}). Obviously, every $p'$-subgroup of $\aut(\cX)$ is tame, but the converse is not always true. From the classical Hurwitz's bound,
if $|G|>84(\gg(\cX)-1)$ then $G$ is non-tame; see  \cite{stichtenoth1973II} or \cite[Theorems 11.56]{HKT}:
An orbit $o$ of $G$ is {\em{tame}} if $G_P$ is a $p'$-group for $P\in o$, otherwise $o$ is a {\em{non-tame orbit}} of $G$.

Stichtenoth's result  \cite{stichtenoth1973II} on the number of short orbits of large automorphism groups; see \cite[Theorems 11.56, 11.116]{HKT}:
\begin{result}
\label{res56.116} Let $G$ be a subgroup of $\aut(\cX)$ whose order exceeds $84(\gg(\cX)-1)$. Then $G$ has at most three short orbits, as follows:
\begin{itemize}
\item[\rm(a)] exactly three short orbits$,$ two tame and
one non-tame$,$ and $|G|< 24\gg(\cX)^2$;

\item[\rm(b)] exactly two short orbits$,$ both non-tame$,$ and  $|G|<16 \gg(\cX)^2$;

\item[\rm(c)] only one short orbit which is non-tame$;$

\item[\rm(d)] exactly two short orbits$,$ one tame and one non-tame.
\end{itemize}
\end{result}
Nakajima's bound \cite{Nakajima}; see also  \cite[Theorem 11.54]{HKT}:
\begin{result}
\label{resnaga} If $\cX$ has positive $p$-rank and $S$ is a $p$-subgroup of $\aut(\cX)$ then
\begin{equation}
\label{naka16feb2013}
|S|\leq \left\{
\begin{array}{lll}
\textstyle\frac{p}{p-2}\,(\gg(\cX)-1)\quad {\mbox{for}}\quad \gamma(\cX)>1,\\
\quad\quad\,\, \gg(\cX)-1\quad\,\, {\mbox{for}}\quad \gamma(\cX)=1.
\end{array}
\right.
\end{equation}
\end{result}
The following corollary to the Deuring Shafarevic formula; see \cite[Theorem 11.129]{HKT}:
\begin{result}
\label{lem29dic2015} If $\cX$ has zero $p$-rank then any element of order $p$ has exactly one fixed point $P$.
\end{result}
 The results from Group theory which play a role in the paper are quoted below. Here $G$ stands for any finite group. We use standard notation and terminology; see \cite{Machi}.

The orbit theorem \cite[Theorem 3.2]{Machi}:
\begin{result}
\label{resorbit}
Let $G\leq \aut(\cX)$ and $P\in \cX$. Then $$|G|=|G_P||P^G|.$$
\end{result}
\begin{result}
\label{resmachi}
Let $G$ be a $p$-group, and $H$ a proper subgroup of $G$. Then $H$ is properly contained in its normalizer.
\end{result}
The maximal subgroups of $PGL(3,q)$ were classified by Mitchell \cite{mitchell1911} and \cite{hartley1925}; see also \cite[Theorem A.10]{HKT}. In this paper
we only need the following corollaries of that classification; see \cite[Theorem 29]{mitchell1911}, \cite[pg. 157]{hartley1925}.
\begin{result}
\label{dicksonclassification}
For $q=p^m,$  the following is
a complete list of subgroups of the group $PGL(2,q)$ up to conjugacy$:$
\begin{enumerate}
\item[\rm(i)] the cyclic group of order $n$ with $n\mid(p^m\pm 1);$
\item[\rm(ii)] the elementary abelian $p$-group of order $p^f$ with $f\leq m;$
\item[\rm(iii)] the dihedral group of order $2n$ with $n\mid
(q\pm 1);$
\item[\rm(iv)] the alternating group of degree $4$ for $p>2$, or $p=2$ and $m$
even$;$
\item[\rm(v)] the symmetric group of degree $4$ for $p>2;$
\item[\rm(vi)] the alternating group of degree $5$ for $5\mid(q^2-1);$
\item[\rm(vii)] the semidirect product of an elementary abelian
$p$-group of order $p^h$ by a cyclic group of order $n$ with
$h\leq m$ and $n\mid(q-1);$
\item[\rm(viii)] $PSL(2,p^f)$ for $f\mid m;$
\item[\rm(ix)] $PGL(2,p^f)$ for $f\mid m$.
\end{enumerate}
\end{result}
\begin{result}
\label{mitchellclassification}
\index{Mitchell Theorem} \index{Theorem!Mitchell}
For $q=p^k,$  the following is
a complete list of subgroups of the group $PSL(3,q)$ up to conjugacy$:$
\begin{enumerate}
\item[\rm(i)] groups of order $q^3(q-1)^2(q+1)/\mu;$
\item[\rm(ii)] groups of order $6(q-1)^2/\mu$;
\item[\rm(iii)] groups of order $3(q^2+q+1)/\mu;$
\item[\rm(iv)] groups of order $q(q+1)(q-1)$;
\item[\rm(v)] $PSL(3,p^m)$, where $m$ is a factor of $k$;
\item[\rm(vi)] groups containing $PSL(3,p^m)$ as self-conjugate subgroups of index $3$ if $p^m-1$ is divisible by $3$ and $k/m$ is divisible by $3$;
\item[\rm(vii)] the group $PSU(3,p^{2m})$, where $2m$ is a factor of $k$;
\item[\rm(viii)] groups containing $PSU(3,p^{2m})$ as self-conjugate subgroups of index $3$ if $p^m+1$ is divisible by $3$ and $k/2m$ is divisible by $3$;
\item[\rm(ix)] The Hessian groups of order $216$ if $q-1$ is divisible by $9$, $72$ and $36$ if $q-1$ is divisible by $3$.
\item[\rm(x)] Groups of order $168$, which exist if $\sqrt{-7}$ exists in $\F$.
\item[\rm(xi)] Groups of order $360$, which exist if both $\sqrt{5}$ and a cube root of unity exist in $\F$;
\item[\rm(xii)] Groups of order $720$ containing the groups of order $360$ as self-conjugate
subgroups. These exist only for $p = 5$ and $k $even;
\item[\rm(xiii)] Groups of order $2520$, each isomorphic with the alternating group of degree $7$. These exist only for $p = 5$ and $k$ even.
\end{enumerate}
\end{result}
\begin{result}
\label{resmitchell}
Let $\Omega$ be a set of smallest size on which $PSL(3,q)$ has a nontrivial action. Then $|\Omega|\geq q^2+q+1$.
\end{result}
\section{Background on non-classical plane curves}
An irreducible (not necessarily nonsingular) plane curve $\cC$ defined over $\K$ is called {\em{non-classical}} if its Hessian curve vanishes; see \cite{SV} and
\cite[Section 7.8]{HKT}. If $\cC$ is given by a homogeneous equation $F(x,y,z)=0$,  a necessary and sufficient condition for $\cC$ to be non-classical is the existence of homogeneous polynomials $G_0(x,y,z),G_1(x,y,z), G_2(x,y,z)$ of the same degree together with a homogeneous polynomial $H(x,y,z)$ such that
\begin{equation}
\label{eq19mar2018}
HF=G_1^{p^m}x+G_2^{p^m}y+G_0^{p^m}z
\end{equation}
for some $m\ge 1$.
Let $\mathcal{L}$ be the (not necessary complete) linear series cut out by lines. For a point $P\in \cC,$ the $({\mathcal{L}},P)$-order sequence is $(j_0,j_1,j_2)$ with $j_0=0<j_1<j_2$ where
$j_i=I(P,\cC\cap r)$ is the intersection number of $\cC$ with a line $r$ through $P$ and $i=1$ or $2$ according as $r$ is a non-tangent line or the tangent to $\cC$ at $P$.
The $\mathcal{L}$-order sequence of $\cC$ is $(\varepsilon_0,\varepsilon_1,\varepsilon_2)$ if $(\varepsilon_0,\varepsilon_1,\varepsilon_2)=(j_0,j_1,j_2)$ for all but a finite number of points of $\cC$. Let $x$ be a separable variable of $\cC$, and let $D^{(i)}(t)$ be the $i$-th Hasse derivative of $t\in \K(\cC)$ relative to $x$.
If $\cC$ is non-classical and (\ref{eq19mar2018}) holds then its order sequence is $(0,1,p^m)$ and the Wronskian of $\cC$ with respect to $\mathcal{L}$ is the determinant
$$W_R=\left|
        \begin{array}{ccc}
          x & y & 1 \\
          D(x) & D(y) & 0 \\
          D^{(p^m)}(x) & D^{(p^m)}(y) & 0 \\
        \end{array}
      \right|=D^{(p^m)}(y).
$$
If $V$ is the intersection divisor of $\cC$ with a line of $PG(2,\K)$, the ramification divisor of $\mathcal{L}$ is
$$R={\rm{div}}(W_R)+(p^m+1){\rm{div}}(dx)+3V,$$
and $\deg(R)=(1+p^m)(2\gg(\cX)-2)+3 \deg(\cC)$.
Let $v_P(R)={\rm{ord}}_P(W)$. Then $R=\sum v_P(R)P$.

If the irreducible plane curve $\cC$ is defined over a finite field $\mathbb{F}_q$ (and $\K=\overline{\mathbb{F}}_q$), another concept of non-classicality is defined, namely that arising from the action of the $q$-Frobenius map $\Phi$ on the nonsingular points of $\cC$ defined by $\Phi(P)=P^q$ where $P=(x:y:z)$ and $P^q=(x^q:y^q:z^q)$; see \cite{SV} and \cite[Section 8.6]{HKT}. More precisely, $\cC$ is called {\emph{$\mathbb{F}_q$-Frobenius non-classical}}
if the tangent at any nonsingular point $P\in \cC$ contains $P^q$. A necessary and sufficient condition for a non-classical curve $\cC$ with $1<p^m\leq q$ to be $\mathbb{F}_q$-Frobenius non-classical is the existence of a homogeneous polynomial $T(x,y,z)$ such that
\begin{equation}
\label{eqA19mar2018}
TF=G_1x^{q/p^m}+G_2y^{q/p^m}+G_0z^{q/p^m}
\end{equation}
with $G_0,G_1,G_2$ as given in (\ref{eq19mar2018}); see \cite[Theorem 8.72]{HKT}.
Frobenius non-classical curves are somewhat rare; see \cite{borgescon,SV}. In some cases, they have many points over $\mathbb{F}_q$; see \cite{AB,borgeshomma,hk1999,SV}. Also, they are closely related to univariate polynomials with minimal values sets, see \cite{borges1}.
Since $\mathcal{L}$ is defined over $\mathbb{F}_q$, $\cC$ also has its $\mathbb{F}_q$-Frobenius order sequence $(\nu_0,\nu_1)$; see \cite[Definition 8.46]{HKT}. If (\ref{eqA19mar2018}) holds then $\nu_0=0$ and $\nu_1=p^m$ by \cite[Proposition 8.42]{HKT}. Let
$$W_S=\left|
        \begin{array}{ccc}
          x^q & y^q & 1 \\
          x & y & 1 \\
          D^{(p^m)}(x) & D^{(p^m)}(y) & 0 \\
        \end{array}
      \right|=(x-x^q)D^{(p^m)}(y).
$$
If $V$ is the intersection divisor of $\cC$ with a line of $PG(2,\mathbb{F}_q)$,
the St\"ohr-Voloch divisor of $\mathcal{L}$ over $\mathbb{F}_q$ is
 $$S={\rm{div}}(W_S)+p^m{\rm{div}}(dx)+(q+2)V,$$
and $\deg(S)=p^m(2\gg(\cC)-2)+(q+2)\deg(\cC)$; see \cite[Definition 8.45]{HKT}.

\section{The DGZ-curve and its singular points}
\label{secDGZ}
A straightforward computation shows that both $D_1(x,y,z)$ and $D_2(x,y,z)$ are $GL(3,\F)$ invariant up to a constant. In other words, the following result dating back to Dickson holds.
\begin{lem}[Dickson]
\label{lemA30gen2018} Let $A\in GL(3,\F)$, and $[x,y,z]^t=A[\bx,\by,\bz]^t$. Then $D_1(x,y,z)=\det(A)D_1(\bx,\by,\bz)$, and $D_2(x,y,z)=\det(A)D_2(\bx,\by,\bz)$.
\end{lem}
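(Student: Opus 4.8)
The plan is to read each of $D_1$ and $D_2$ as the determinant of a $3\times 3$ matrix whose columns are successive Frobenius twists of a single coordinate vector, and then to exploit that such twists commute with left multiplication by a matrix defined over $\F$. So the whole lemma reduces to the bookkeeping identity $\det(AM)=\det(A)\det(M)$ once the right matrices are identified; this matches the ``straightforward computation'' flagged in the text.

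First I would set $v=(x,y,z)^{t}$ and $\bar v=(\bx,\by,\bz)^{t}$, so that the hypothesis reads $v=A\bar v$, and for an integer $k\ge 0$ write $v^{[k]}=(x^{q^{k}},y^{q^{k}},z^{q^{k}})^{t}$ for the coordinatewise $q^{k}$-power of $v$. With this notation $D_1(x,y,z)=\det\bigl(v\mid v^{[1]}\mid v^{[3]}\bigr)$ and $D_2(x,y,z)=\det\bigl(v\mid v^{[1]}\mid v^{[2]}\bigr)$, and similarly with bars.

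The key step is the identity $v^{[k]}=A\,\bar v^{[k]}$ for every $k\ge 0$. Indeed, the $i$-th coordinate of $(A\bar v)^{[k]}$ is $\bigl(\sum_j A_{ij}\bar v_j\bigr)^{q^{k}}=\sum_j A_{ij}^{q^{k}}\bar v_j^{q^{k}}$, since the $q^{k}$-power map is a field endomorphism of $\K$, and $A_{ij}^{q^{k}}=A_{ij}$ because $A_{ij}\in\F$. Hence $(A\bar v)^{[k]}=A\,\bar v^{[k]}$, and combining this with $v=A\bar v$ gives $v^{[k]}=A\,\bar v^{[k]}$ as claimed. Assembling the columns, the matrix with columns $v,v^{[1]},v^{[3]}$ equals $A$ times the matrix with columns $\bar v,\bar v^{[1]},\bar v^{[3]}$; taking determinants yields $D_1(x,y,z)=\det(A)\,D_1(\bx,\by,\bz)$, and the identical argument with the columns $v,v^{[1]},v^{[2]}$ gives $D_2(x,y,z)=\det(A)\,D_2(\bx,\by,\bz)$.

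There is essentially no obstacle here: the only point requiring care is the Frobenius-linearity of left multiplication by $A$, i.e.\ that $(A\bar v)^{[k]}=A\bar v^{[k]}$, which is precisely where the hypothesis $A\in GL(3,\F)$ (and not merely $A\in GL(3,\K)$) is used. Everything else is the multiplicativity of the determinant.
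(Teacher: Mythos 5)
Your proof is correct and is exactly the ``straightforward computation'' the paper alludes to: writing each $D_i$ as $\det$ of a matrix with Frobenius-twisted columns, noting $(A\bar v)^{[k]}=A\bar v^{[k]}$ since the entries of $A$ lie in $\F$, and invoking multiplicativity of the determinant. Nothing further is needed.
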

As a corollary of Lemma \ref{lemA30gen2018}, the rational function
\begin{equation}
\label{eqA30gen2018}
F(x,y,z)=\frac{D_1(x,y,z)}{D_2(x,y,z)}
\end{equation}
is $GL(3,\F)$ invariant. 
\begin{lem}
\label{lemB0gen2018} $F(x,y,z)$ is a homogeneous polynomial of degree $q^3-q^2$ defined over $\F$.
\end{lem}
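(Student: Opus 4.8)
The plan is to show that $D_2$ divides $D_1$ in $\overline{\mathbb{F}}_q[x,y,z]$ and then to descend this divisibility to $\F[x,y,z]$. Two preliminary observations take care of the degree statement: $D_1$ is homogeneous of degree $1+q+q^3$ and $D_2$ is homogeneous of degree $1+q+q^2$, both with coefficients in $\F$; hence, once $F=D_1/D_2$ is known to be a polynomial, it is automatically homogeneous over $\F$ of degree $(1+q+q^3)-(1+q+q^2)=q^3-q^2$. (Also $D_2$ is a nonzero polynomial, its terms being pairwise distinct monomials, so $F$ is a well-defined element of the function field.)

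For a point $P=(a:b:c)\in PG(2,\F)$ set $\ell_P=ax+by+cz$. Since $a,b,c\in\F$, the relation $ax+by+cz=0$ forces $ax^{q^j}+by^{q^j}+cz^{q^j}=(ax+by+cz)^{q^j}=0$ for every $j\geq1$. Taking $j\in\{1,3\}$, along the hyperplane $\ell_P=0$ the three rows of the matrix defining $D_1$ are bound by a nontrivial $\F$-linear relation, so $D_1$ vanishes identically on $\{\ell_P=0\}$; being linear and hence irreducible, $\ell_P$ therefore divides $D_1$. The same argument with $j\in\{1,2\}$ gives $\ell_P\mid D_2$. The $q^2+q+1$ forms $\ell_P$ are pairwise non-proportional and $q^2+q+1=\deg D_2$, so comparing degrees yields $D_2=\lambda\prod_{P\in PG(2,\F)}\ell_P$ for some $\lambda\in\Fc$ (this is the classical Moore-determinant factorization, which one may also simply cite). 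Since the $\ell_P$ are pairwise coprime and each divides $D_1$, their product — and hence $D_2$ — divides $D_1$ in $\overline{\mathbb{F}}_q[x,y,z]$.

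It remains to check that $F=D_1/D_2$ lies in $\F[x,y,z]$ and not merely in $\overline{\mathbb{F}}_q[x,y,z]$. This is a standard descent: writing $D_1=QD_2$ and performing the division algorithm with respect to a fixed monomial order, each leading term subtracted off is a quotient of leading terms of polynomials with coefficients in $\F$, so it has coefficients in $\F$; hence $Q\in\F[x,y,z]$. (Equivalently, $\F[x,y,z]$ is integrally closed in $\F(x,y,z)$.) Together with the degree and homogeneity remarks, this proves the lemma. I do not expect a real obstacle here; the two slightly delicate points are the pairwise coprimality — that is, distinctness — of the factors $\ell_P$, which is what promotes the individual relations $\ell_P\mid D_1$ to $D_2\mid D_1$, and the descent of the divisibility from $\overline{\mathbb{F}}_q$ to $\F$.

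An alternative, more structural route identifies $F$ outright with a Dickson invariant. Let $\varphi(T)=\prod_{v\in\langle x,y,z\rangle_{\F}}(T-v)=T^{q^3}+c_2T^{q^2}+c_1T^{q}+c_0T$, an $\F$-linearized polynomial whose coefficients lie in $\F[x,y,z]$ by the recursion $\varphi(T)=\varphi_{\langle x,y\rangle}(T)^{q}-\varphi_{\langle x,y\rangle}(z)^{q-1}\varphi_{\langle x,y\rangle}(T)$ together with its two-variable analogue. From $x,y,z\in\langle x,y,z\rangle_{\F}$ we get $\varphi(x)=\varphi(y)=\varphi(z)=0$, so the last column $(x^{q^3},y^{q^3},z^{q^3})^{t}$ of the matrix defining $D_1$ equals $-c_2(x^{q^2},y^{q^2},z^{q^2})^{t}$ modulo its first two columns; expanding the determinant gives $D_1=-c_2D_2$, whence $F=-c_2\in\F[x,y,z]$. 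Finally $\varphi$ is homogeneous of degree $q^3$ once $x,y,z,T$ are all given weight $1$, so $c_2$ is homogeneous of degree $q^3-q^2$, again giving $\deg F=q^3-q^2$.
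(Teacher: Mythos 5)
Your proposal is correct, and its main route is in essence the paper's argument: both factor $D_2$ as (a constant times) the product of the $q^2+q+1$ pairwise non-proportional $\F$-rational linear forms and show that each of these forms also divides $D_1$, then conclude $D_2\mid D_1$ and read off the degree $(1+q+q^3)-(1+q+q^2)=q^3-q^2$. The difference is in how the divisibility by every $\F$-line is obtained: the paper deduces it from the Dickson invariance of Lemma \ref{lemA30gen2018} together with the transitivity of $PGL(3,\F)$ on the lines of $PG(2,\F)$, starting from the obvious component $x=0$ of $\mathcal{D}_2$, whereas you verify it uniformly for every line via the Frobenius relation $ax^{q^j}+by^{q^j}+cz^{q^j}=(ax+by+cz)^{q^j}$ for $a,b,c\in\F$, which makes the argument independent of the group action. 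You also make explicit two points the paper leaves implicit: that $D_2\neq 0$ (distinct monomials in the determinant expansion) and that the quotient descends to $\F[x,y,z]$ (your division/integral-closure remark), both of which are correct. Your alternative argument is genuinely different and worth noting: identifying $F$ with $-c_2$, the coefficient of $T^{q^2}$ in the subspace polynomial $\varphi(T)=\prod_{v\in\langle x,y,z\rangle_{\F}}(T-v)$, yields divisibility, $\F$-rationality and homogeneity of degree $q^3-q^2$ in one stroke, without factoring $D_2$ at all; this is essentially Dickson's original construction of the invariant, and it buys an explicit closed form for $F$, while the paper's route buys the geometric description of $\mathcal{D}_1$ and $\mathcal{D}_2$ as unions of $\F$-rational lines, which is reused later in the paper.
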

\begin{proof}
From Lemma \ref{lemA30gen2018}, the algebraic plane curve $\mathcal{D}_1$ with homogeneous equation
$D_1(x,y,z)=0$ is left invariant by $PGL(3,\F)$, and the same holds for the algebraic plane curve $\mathcal{D}_2$ with homogeneous equation $D_2(x,y,z)=0$. Obviously, the line $\ell$ of equation $x=0$ is a component of $\mathcal{D}_2$.    Since $PGL(3,\F)$ acts on the set of all lines of the projective plane $PG(2,\F)$ as transitive permutation group, this yields that each such line is a component of $\mathcal{D}_2$. On the other hand, $\deg\,D_2(x,y,z)=q^2+q+1$ and $PG(2,\F)$ has as many as $q^2+q+1$ lines. Therefore, $\mathcal{D}_2$ splits into $q^2+q+1$ lines each counted with multiplicity $1$. The same argument applies to $\mathcal{D}_1$ showing that each line of $PG(2,\F)$ is a component of $\mathcal{D}_1$. Therefore $D_2(x,y,z)$ divides $D_1(x,y,z)$.
\end{proof}
From now on $\cC$ stands for the algebraic plane curve with homogenous equation $F(x,y,z)=0$. According to Introduction, $\cC$ is the DGZ curve.
\begin{remark}
\label{remA16mar2018}
{\em{Let $q=2$. A straightforward computation shows that
\begin{equation}
\label{eqC19mar2018}
{\mbox{$F(x,y,z)=x^4 + x^2y^2 + x^2yz + x^2z^2 + xy^2z + xyz^2 + y^4 + y^2z^2 + z^4$}}
\end{equation}
This curve already mentioned in Serre's lecture notes \cite{serre1985} was investigated by Top \cite{jt}. Actually $\cC$ is isomorphic over $\mathbb{F}_8$ to the well known Klein curve of equation $x^3y+y^3z+z^3x=0$. This implies that $\aut(\cC)\cong PGL(3,2)$.}}
\end{remark}
\begin{prop}
\label{propA30gen}
The DGZ curve  has no $\mathbb{F}_q$-rational points.
\end{prop}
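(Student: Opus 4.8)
The plan is to reduce the statement to the computation of the single value $F(1,0,0)$ and then to extract that value from the polynomial identity $D_1=F\,D_2$ coming from Lemma~\ref{lemB0gen2018}.

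First I would exploit the $GL(3,\F)$-invariance of $F$ recorded just after Lemma~\ref{lemA30gen2018}: if $[x,y,z]^t=A[\bx,\by,\bz]^t$ with $A\in GL(3,\F)$, then $F(x,y,z)=F(\bx,\by,\bz)$ as an identity of polynomials. Since $GL(3,\F)$ acts transitively on $\F^3\setminus\{(0,0,0)\}$ — any nonzero vector is the first column of some $A\in GL(3,\F)$ — the value $F(a,b,c)$ is independent of the choice of $(a,b,c)\in\F^3\setminus\{(0,0,0)\}$ and equals $F(1,0,0)$. Hence no point of $PG(2,\mathbb{F}_q)$ lies on $\cC$ as soon as $F(1,0,0)\ne 0$. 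Moreover, since $F$ is homogeneous of degree $q^3-q^2$, the only monomial of $F$ involving $x$ alone is $x^{q^3-q^2}$, so $F(1,0,0)$ is exactly the coefficient of $x^{q^3-q^2}$ in $F$; it remains to show this coefficient equals $1$.

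For this I would read $D_1=F\,D_2$ inside the polynomial ring $\F[y,z][x]$. Expanding the two determinants along the first column gives
$$D_2=(yz^q-y^qz)\,x^{q^2}+(y^{q^2}z-yz^{q^2})\,x^q+(y^qz^{q^2}-y^{q^2}z^q)\,x,$$
$$D_1=(yz^q-y^qz)\,x^{q^3}+(y^{q^3}z-yz^{q^3})\,x^q+(y^qz^{q^3}-y^{q^3}z^q)\,x,$$
so both have the same leading coefficient in $x$, namely $\Delta:=yz^q-y^qz$, which is a nonzero element of the integral domain $\F[y,z]$. Therefore $\deg_x D_2=q^2$ and $\deg_x D_1=q^3$, and comparing top-degree terms in $D_1=F\,D_2$ forces $\deg_x F=q^3-q^2$ with leading $x$-coefficient $1$. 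Since, by homogeneity of $F$, that leading $x$-coefficient is a scalar in $\F$, it is $1$; hence $F(1,0,0)=1\ne 0$, which completes the argument.

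The proof is short and I do not anticipate a genuine obstacle. The two points that must be handled with care are that $F$ is an honest polynomial (so that the degree bookkeeping in $\F[y,z][x]$ is legitimate), which is precisely the content of Lemma~\ref{lemB0gen2018}, and the use of the homogeneity of $F$ to identify its leading coefficient in $x$ with a scalar rather than with a polynomial in $y,z$.
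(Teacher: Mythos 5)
Your proof is correct and follows essentially the same strategy as the paper's: reduce to a single $\F$-rational point via the transitive action coming from Lemma \ref{lemA30gen2018}, and then compare matching terms of $D_1$ and $D_2$ in the factorization $D_1=F\,D_2$. The paper does this at $(0:0:1)$, observing that both $\mathcal{D}_1$ and $\mathcal{D}_2$ have a $(q+1)$-fold point there with the same initial form so that $F$ cannot vanish at that point, while you work at $(1:0:0)$ by matching leading $x$-coefficients to get $F(1,0,0)=1$; both are sound, and your only slip is cosmetic (the displayed cofactor grouping is the expansion along the first row, not the first column, of the determinants as written).
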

\begin{proof} Since $PGL(3,\F)$ acts on the set of all points of $PG(2,\F)$ as a transitive permutation group, it is sufficient to show the result for the origin $O=(0:0:1)$. Since $D_1(x,y,1)=xy^q-yx^q+g_1(x,y)$ with $\deg g_1(x,y)>q+1$ and $D_2(x,y,1)=xy^q-x^qy+g_2(x,y)$ with $\deg g_2(x,y)>q+1$, the origin is a ($q+1$)-fold point for both curves $\mathcal{D}_1$ and $\mathcal{D}_2$. From this the result follows.
\end{proof}
\begin{prop}
\label{propB30gen}
Each point lying in $PG(2,\mathbb{F}_{q^2})\setminus PG(2,\F)$ is a $(q-1)$-fold point of $\cC$.
\end{prop}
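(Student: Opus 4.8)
The plan is to exploit the $PGL(3,\F)$-invariance of $F$ together with the additivity of multiplicities under products. For a point $P$ and a homogeneous polynomial $g$, write $m_P(g)$ for the multiplicity at $P$ of the plane curve $g=0$, i.e. the least total degree occurring after dehomogenising at a coordinate not vanishing at $P$ and moving $P$ to the origin. Since $D_1=F\cdot D_2$ as polynomials (by Lemma~\ref{lemB0gen2018} and (\ref{eqA30gen2018})), we get $m_P(\cC)=m_P(D_1)-m_P(D_2)$ for every $P$. So the proposition reduces to the two statements $m_P(D_1)=q$ and $m_P(D_2)=1$ for $P\in PG(2,\FF)\setminus PG(2,\F)$.

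First I would reduce to a single point. If $P\in PG(2,\FF)\setminus PG(2,\F)$ then $P\neq P^q$ while $P^{q^2}=P$, so the $q$-Frobenius map interchanges $P$ and $P^q$ and fixes the line $\ell$ joining them; hence $\ell$ is $\F$-rational. As $PGL(3,\F)$ is transitive on the lines of $PG(2,\F)$, some element of $PGL(3,\F)$ carries $\ell$ to the line $z=0$ and thus $P$ to a point of the form $P_0=(1:\omega:0)$ with $\omega\in\FF\setminus\F$ (the image cannot be $(0:1:0)$, which lies in $PG(2,\F)$). Since $F$ is $PGL(3,\F)$-invariant, it is enough to compute $m_{P_0}(D_1)$ and $m_{P_0}(D_2)$ for $P_0=(1:\omega:0)$.

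Then comes the local computation. Dehomogenising at $x=1$ and setting $y=\omega+Y$, $z=Z$, I would subtract the first column from the other two in each determinant; using $\omega^{q^2}=\omega$, $\omega^{q^3}=\omega^q$ and putting $\delta:=\omega^q-\omega\neq0$, one obtains
\begin{align*}
D_2&=(\delta+Y^q-Y)(Z^{q^2}-Z)-(Y^{q^2}-Y)(Z^q-Z),\\
D_1&=\delta(Z^{q^3}-Z^q)+(Y^q-Y)(Z^{q^3}-Z)-(Y^{q^3}-Y)(Z^q-Z).
\end{align*}
In $D_2$ the unique monomial of smallest degree is $-\delta Z$, so $m_{P_0}(D_2)=1$. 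In $D_1$ the two degree-$2$ monomials $YZ$ coming from the two products cancel; the remaining monomials of those products have degree $q+1$, $q^3+1$ or $q^3+q$, all exceeding $q$, while $\delta(Z^{q^3}-Z^q)$ supplies $-\delta Z^q$. Hence $-\delta Z^q$ is the unique monomial of smallest degree in $D_1$ and $m_{P_0}(D_1)=q$. Combining, $m_{P_0}(\cC)=m_{P_0}(D_1)-m_{P_0}(D_2)=q-1$, as claimed.

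The delicate step is the cancellation of the quadratic part of $D_1$: without noticing it one would wrongly read $m_{P_0}(D_1)=2$ and conclude that $P_0$ is a smooth point of $\cC$. The point to get right is that, once those terms cancel, the genuine lowest-degree part is the ``pure $Z$'' contribution $-\delta Z^q$ from $\delta(Z^{q^3}-Z^q)$, and that no other monomial reaches degree $q$. Everything else is routine bookkeeping.
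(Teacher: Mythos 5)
Your proof is correct and follows essentially the same route as the paper: reduce by the $PGL(3,\F)$-action to a convenient representative point of $PG(2,\FF)\setminus PG(2,\F)$ and read off the lowest-order terms of $D_1$ and $D_2$ there (order $q$ and $1$ respectively), concluding that $F$ has multiplicity $q-1$. The only differences are cosmetic: you normalise the point to $(1:\omega:0)$ on $z=0$ and use additivity of multiplicities in $D_1=F\cdot D_2$, while the paper normalises to $(\alpha:0:1)$ and expands the quotient $H_1/H_2=y^{q-1}+\cdots$ directly.
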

\begin{proof} Since $PGL(3,\F)$ acts on the set of all points of $PG(2,\FF)\setminus PG(2,\F)$ as a transitive permutation group, it suffices to perform the proof for a point $P=(\alpha:0:1)$ with $\alpha\in \FF\setminus \F$.
In the affine plane $AG(2,\FF)$ with line at infinity $z=0$, the translation $\tau :(x,y)\mapsto (x-\alpha,y)$ taking $P$ to the origin $O=(0,0)$, maps $\mathcal{D}_1$ and $\mathcal{D}_2$ to the curves $\mathcal{Y}_1$ and $\mathcal{Y}_2$ with affine equations
$$H_1(x,y)=\begin{vmatrix} (x+\alpha) & (x+\alpha)^q & (x+\alpha)^{q^3} \\ y & y^q & y^{q^3} \\ 1 & 1 & 1 \end{vmatrix} $$ and $$H_2(x,y)=\begin{vmatrix} (x+\alpha) & (x+\alpha)^q & (x+\alpha)^{q^2} \\ y & y^q & y^{q^2} \\ 1 & 1 & 1 \end{vmatrix}$$
respectively. Expanding along the first row yields $H_1(x,y)=(\alpha-\alpha^q)y^q+g_1(x,y)$ and $H_2(x,y)=(\alpha-\alpha^q)y+g_2(x,y)$, where $\deg g_1(x,y)>q$ and $\deg g_2(x,y)>1$.
The translation $\tau$ maps $\cC$ to a curve $\cZ$ with affine equation $G(x,y)$. Then
\begin{equation}
\label{zeta}
G(x,y)=\frac{H_1(x,y)}{H_2(x,y)}=y^{q-1}+G_1(x,y),
\end{equation} with $\deg G_1(x,y)>q-1$. This shows that $P$ is a $(q-1)$-fold point of $\cC$.
\end{proof}
\begin{lem}
\label{lem2Feb}
The singular points of the DGZ curve are those which lie in $PG(2,\mathbb{F}_{q^2})\setminus PG(2,\F)$ .
\end{lem}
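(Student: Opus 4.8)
One inclusion is immediate: by Proposition~\ref{propB30gen} every point of $PG(2,\FF)\setminus PG(2,\F)$ is a $(q-1)$-fold point of $\cC$, hence a singular point, because the standing hypothesis $q\ge 3$ forces $q-1\ge 2$. For the reverse inclusion the plan is to transfer a singularity of $\cC$ up to the larger curve $\mathcal{D}_1$ of equation $D_1(x,y,z)=0$, whose singular locus is pinned down by a short differentiation.

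So let $P$ be a singular point of $\cC$, so that $F(P)=0$ and $F_x(P)=F_y(P)=F_z(P)=0$. From $F(P)=0$ and Proposition~\ref{propA30gen} we already get $P\notin PG(2,\F)$. Recalling from the proof of Lemma~\ref{lemB0gen2018} that $D_2\mid D_1$, write $D_1=F\cdot D_2$; differentiating this product and evaluating at $P$ shows at once that $D_1$ and all three of its first partial derivatives vanish at $P$, i.e. $P$ is also a singular point of $\mathcal{D}_1$.

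The key computation is then to determine where $\nabla D_1$ vanishes. Since $q=p^h$, the monomials $x^q$ and $x^{q^3}$, and likewise their $y$- and $z$-analogues, have zero partial derivative, so expanding the determinant $D_1$ along its first column and differentiating gives, at any point $P=(x:y:z)$,
$$\nabla D_1(P)=(x^q,y^q,z^q)\times(x^{q^3},y^{q^3},z^{q^3}),$$
the vector product of the coordinate vectors of $\Phi_q(P)$ and $\Phi_{q^3}(P)$. Thus $\nabla D_1(P)=0$ exactly when these two vectors are proportional; applying the inverse of the $q$-Frobenius to each coordinate, this is equivalent to $(x,y,z)$ and $(x^{q^2},y^{q^2},z^{q^2})$ being proportional, i.e. to $P=\Phi_{q^2}(P)$, i.e. to $P\in PG(2,\FF)$. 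Combined with the previous paragraph this forces $P\in PG(2,\FF)\setminus PG(2,\F)$, which is what had to be shown.

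I expect the only step that is not pure bookkeeping to be the gradient identity $\nabla D_1(P)=\Phi_q(P)\times\Phi_{q^3}(P)$; once it is available, the argument above closes immediately. As a by-product the same computation gives $\mathrm{Sing}(\mathcal{D}_1)=PG(2,\FF)$, since conversely $\Phi_{q^2}(P)=P$ makes the last two columns of $D_1$ equal and hence $D_1(P)=0$ — though this sharper statement is not needed here.
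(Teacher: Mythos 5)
Your proof is correct and follows essentially the same route as the paper: pass from $\cC$ to $\mathcal{D}_1$ via $D_1=FD_2$, observe that the first partials of $D_1$ are Frobenius-twisted expressions whose common vanishing forces the point into $PG(2,\FF)$, and finish with Propositions~\ref{propA30gen} and~\ref{propB30gen}. Your cross-product identity $\nabla D_1(P)=\Phi_q(P)\times\Phi_{q^3}(P)$ is just the homogeneous form of the paper's affine partial-derivative computation, with the minor benefit of treating the points at infinity uniformly.
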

\begin{proof}
Let $P$ be a singular point of $\cC$. Since $D_1(x,y,z)=D_2(x,y,z)F(x,y,z)$,
$P$ is a singular point of $\mathcal{D}_1$, as well. On the other hand, a straightforward computation yields $$ \dfrac{\partial D_1}{\partial x}=y^q-y^{q^3}, \quad
\dfrac{\partial D_1}{\partial y}=x^{q^3}-x^q.$$ Therefore $\mathcal{D}_1$, and hence $\cC$,  can only have singularities at points lying in $PG(2,\FF)$. Now, the assertion follows from Proposition \ref{propB30gen}.
\end{proof}
\begin{lem}
\label{lem31jan2018}
Each point of the DGZ curve lying in $PG(2,\FF)\setminus PG(2,\F)$ is the center of a unique branch. Furthermore, such a branch has order $q-1$, and its intersection number with its tangent is $q$.
\end{lem}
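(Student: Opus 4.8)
The plan is to push the local computation from the proof of Proposition \ref{propB30gen} one step further. By transitivity of $PGL(3,\F)$ on $PG(2,\FF)\setminus PG(2,\F)$ we may again assume $P=(\alpha:0:1)$ with $\alpha\in\FF\setminus\F$ and apply the translation $\tau$ used there, which turns $\cC$ into the curve $\cZ$ of affine equation $G(x,y)=H_1(x,y)/H_2(x,y)=y^{q-1}+G_1(x,y)$ with $\deg G_1>q-1$. Hence the tangent cone of $\cZ$ at the origin $O$ is $y^{q-1}=0$; since a power of a linear form factors uniquely, it follows that $O$ is a $(q-1)$-fold point and that \emph{every} branch of $\cZ$ centred at $O$ is tangent to the line $\ell: y=0$.

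The key step is to evaluate the intersection multiplicity $I(O,\cZ\cap\ell)$. Pulling $y$ out of the second row of the determinants $H_1$ and $H_2$ by multilinearity, one gets $H_i=y\,\tilde H_i$, and setting $y=0$ in the resulting determinants (whose second row becomes $(1,0,0)$) yields, using $\alpha^{q^2}=\alpha$,
$$\tilde H_1(x,0)=(x+\alpha)^{q^3}-(x+\alpha)^{q}=x^{q^3}-x^{q},\qquad \tilde H_2(0,0)=\alpha-\alpha^{q}\neq 0 .$$
Thus $\tilde H_2$ is a unit in the local ring of $AG(2,\K)$ at $O$, so $\cZ$ coincides near $O$ with the affine curve $\tilde H_1(x,y)=0$; equivalently, $\ell$ is the only line of $PG(2,\F)$ through $P$, and the factor $y$ shared by $H_1,H_2$ comes entirely from $\mathcal{Y}_1$ and $\mathcal{Y}_2$. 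Consequently
$$I(O,\cZ\cap\ell)=\ord_x\bigl(\tilde H_1(x,0)\bigr)=\ord_x\bigl(x^{q^3}-x^{q}\bigr)=q .$$

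Finally I would count branches. Let $\gamma_1,\dots,\gamma_b$ be the branches of $\cZ$ with centre $O$, of orders $m_1,\dots,m_b$; since $O$ is a $(q-1)$-fold point, $m_1+\cdots+m_b=q-1$. Each $\gamma_i$ is tangent to $\ell$, hence meets $\ell$ with multiplicity at least $m_i+1$, and the intersection multiplicity of $\cZ$ with $\ell$ at $O$ is the sum of the intersection numbers of these branches with $\ell$, so
$$q=I(O,\cZ\cap\ell)=\sum_{i=1}^{b}I(\gamma_i,\ell)\;\geq\;\sum_{i=1}^{b}(m_i+1)=(q-1)+b ,$$
forcing $b\leq 1$; and $b\geq 1$ because $O\in\cZ$. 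Therefore $O$ is the centre of a unique branch $\gamma$, whose order is $m_1=q-1$ and whose tangent line is $\ell$ (the support of the tangent cone); moreover $I(\gamma,\ell)=I(O,\cZ\cap\ell)=q$. Carrying this back through $\tau^{-1}$ and then through $PGL(3,\F)$ gives the claim for all points of $\cC$ in $PG(2,\FF)\setminus PG(2,\F)$.

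I expect the only delicate point to be the one isolated in the second paragraph: $G$ is presented as a quotient whose numerator and denominator both vanish along $\ell$, so before reading off $I(O,\cZ\cap\ell)$ one must cancel the common factor $y$ — which turns out to be precisely the unique $\F$-rational line through $P$ — and check that the remaining denominator is a local unit. Everything else is routine: an elementary determinant evaluation, the branch decomposition of a curve–line intersection, and the standard fact that a branch meets its own tangent with multiplicity strictly exceeding its order.
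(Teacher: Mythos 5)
Your proof is correct, and it takes a genuinely different route from the paper's. You compute the intersection number of $\cZ$ with the tangent line $\ell:y=0$ at $O$ directly from the equation, after cancelling the common factor $y$ of $H_1$ and $H_2$ (your determinant evaluation $\tilde H_1(x,0)=x^{q^3}-x^q$, $\tilde H_2(0,0)=\alpha-\alpha^q\neq 0$ is right, and it is indeed the delicate point), and then you force uniqueness by the counting inequality $q=I(O,\cZ\cap\ell)=\sum_i I(\gamma_i,\ell)\geq\sum_i(m_i+1)=(q-1)+b$, using that every branch at a point whose tangent cone is $y^{q-1}$ is tangent to $\ell$ and that $\sum_i m_i=q-1$ equals the multiplicity. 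The paper instead substitutes a primitive branch representation $(x(t),y(t))=(c_1t^u+\ldots,\,d_1t^v+\ldots)$ into $H_1$ and observes that the two lowest-order terms, at $t^{vq}$ and $t^{v+qu}$, must cancel, giving $vq=v+qu$ and hence $u=q-1$, $v=q$; uniqueness then follows from the multiplicity being $q-1$ (HKT, Theorem 4.36), and the tangent intersection $q$ is read off from the expansion. Both arguments rest on the same local data from Proposition \ref{propB30gen} and on multiplicity being the sum of branch orders, so neither is deeper than the other; your version is arguably more elementary (no power-series cancellation) and gets the tangency order $q$ first with uniqueness as a byproduct, whereas the paper's version produces the explicit primitive representation \eqref{branchP}, $x(t)=\alpha+c_1t^{q-1}+\ldots$, $y(t)=d_1t^q+\ldots$, which is reused later (in the genus computation and in Proposition \ref{prop5feb2017}), so if you adopted your proof you would still need to extract that expansion, which at that point follows immediately from $u=q-1$, $v=q$.
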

\begin{proof}
Let $P\in PG(2,\FF)\setminus PG(2,\F)$ be a point of $\cC$. W.l.o.g. $P=(\alpha:0:1)$. With the notation used in the proof of Proposition \ref{propB30gen}, take a branch $\gamma$ of $\cZ$ centred in $(0,0)$. From the proof of that proposition, $\gamma$ has a primitive representation $(x(t),y(t))$ with $x(t),y(t)\in \overline{\mathbb{F}}_q[[t]]$ where $$\begin{cases} x(t)=c_1t^u+... \\ y(t)=d_1t^v+... \end{cases} $$ with $c_1\neq 0$, $d_1 \neq 0$, and,  by \eqref{zeta}, $v>u$ and $u\leq (q-1)$. By direct computation $H_1(x(t),y(t))=(\alpha-\alpha^q)d_1^qt^{vq}+d_1c_1^qt^{v+qu}+g(t)$, where all the terms in $g(t)$ have higher degree than $(v+qu)$. On the other hand $G(x(t),y(t))=0$. Therefore $vq=v+qu$ whence $u=q-1$ and $v=q$. From \cite[Theorem 4.36]{HKT}, $\gamma$ is the unique branch of $\cZ$ centred at $O$, and its order equals $q-1$. Going back to $\cC$, there is unique branch of $\cC$ centred at $P$ and
 \begin{equation} \label{branchP}
 \gamma = \begin{cases} x(t)=\alpha +c_1t^{q-1}+\ldots \\ y(t)=d_1t^q+\ldots \end{cases}.
 \end{equation}
is a primitive representation for it. Since the tangent at $\gamma$ is the line $y=0$, $I(P,\cC\cap \{y=0\})=I(P,\gamma \cap \{y=0\})=q$.
\end{proof}
\begin{prop}
\label{propA31gen2018}
The DGZ curve is absolutely irreducible.
\end{prop}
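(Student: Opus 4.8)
The plan is to argue by contradiction, using that the singular locus of $\cC$ is completely understood — by Lemma \ref{lem2Feb} it is exactly $PG(2,\FF)\setminus PG(2,\F)$, and by Lemma \ref{lem31jan2018} every such point carries a \emph{single} branch — together with B\'ezout's theorem, which forces any two projective plane curves of positive degree to meet.

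First I would record that $\cC$ is reduced, i.e.\ that $F$ is squarefree. This comes from a small extension of the computation in the proof of Lemma \ref{lem2Feb}: in the chart $z=1$ one has $\partial D_1/\partial x=y^q-y^{q^3}=-(y^{q^2}-y)^q$ and $\partial D_1/\partial y=x^{q^3}-x^q=(x^{q^2}-x)^q$, and since $y^{q^2}-y=\prod_{\lambda\in\FF}(y-\lambda)$ and $x^{q^2}-x=\prod_{\mu\in\FF}(x-\mu)$ have no common irreducible factor, the two $q$-th powers above are coprime in $\overline{\mathbb{F}}_q[x,y]$; hence $D_1(x,y,1)$, and therefore its divisor $F(x,y,1)$, is squarefree. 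A quick check that $z$ divides $D_1$ (and $D_2$) with multiplicity one then gives $z\nmid F$, so $F$ is squarefree.

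Next, write $\cC=\cC_1\cup\cdots\cup\cC_r$ with $\cC_1,\dots,\cC_r$ the pairwise distinct absolutely irreducible components, and suppose $r\ge 2$. Choose $i\ne j$; by B\'ezout's theorem the curves $\cC_i$ and $\cC_j$, both of positive degree, meet at some point $P$. Then $\cC$ has at least one branch centred at $P$ lying on $\cC_i$ and at least one lying on $\cC_j$, and these are distinct branches of $\cC$; in particular $P$ is a singular point of $\cC$ carrying at least two branches. By Lemma \ref{lem2Feb}, $P\in PG(2,\FF)\setminus PG(2,\F)$, so Lemma \ref{lem31jan2018} asserts that $P$ is the centre of a unique branch — a contradiction. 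Hence $r=1$ and $\cC$ is absolutely irreducible.

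I do not expect a serious obstacle: the only input not already in hand is the squarefreeness of $F$, which is a routine derivative computation, and the rest is the bookkeeping above. If one prefers an argument that does not single out a point $P$, one may instead invoke the $PGL(3,\F)$-invariance of $\cC$: the group permutes the $\cC_i$ and stabilises the unique component through each singular point, hence acts transitively on the $\cC_i$; but the $\cC_i$ must be pairwise disjoint, since a common point of two of them would be a singular point lying on two components, and this contradicts B\'ezout once $r\ge 2$. This adds nothing essential.
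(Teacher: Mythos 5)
Your proof is correct and follows essentially the same route as the paper: if there were two distinct absolutely irreducible components, they would meet (B\'ezout) at a point carrying two distinct branches of $\cC$, which contradicts Lemma \ref{lem2Feb} combined with the unique-branch statement of Lemma \ref{lem31jan2018}. The only divergence is how the repeated-component case is excluded: you prove $F$ is squarefree via the coprimality of $\partial D_1/\partial x=y^q-y^{q^3}$ and $\partial D_1/\partial y=x^{q^3}-x^q$ (plus the check on the factor $z$), whereas the paper simply notes that a component of multiplicity $\mu\ge 2$ would force all of its points to be singular, contradicting the finiteness of the singular locus -- both arguments are valid.
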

\begin{proof}
 First we show that $\cC$ does not have two different absolutely irreducible components. Assume on the contrary that $\mathcal{G}$ and $\mathcal{H}$ are two such components. Let $P\in \mathcal{G}\cap \mathcal{H}$, and take a branch $\gamma$ of $\mathcal{G}$ and a branch $\delta$ of $\mathcal{H}$ both centred at $P$. Then $P$ is a singular point of $\cC$ with at least two branches centred at $P$. From Lemma \ref{lem2Feb}, $P\in PG(2,\FF)\setminus PG(2,\F)$, but this contradicts Lemma \ref{lem31jan2018}. Therefore, $\cC$ has a unique absolutely irreducible component $\mathcal{G}$ with multiplicity $\mu\ge 1$. On the other hand, Lemma \ref{lem2Feb} yields that $\cC$ has only a finite number of singular points.
 Therefore, $\mu=1$.
\end{proof}
As a corollary we have the following result.
\begin{cor}
\label{cor31gen2018}
At every point of the DGZ curve, there is only one branch of $\cC$ centred at that point.
\end{cor}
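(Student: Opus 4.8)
The plan is to split into two cases according to whether the point $P\in\cC$ is simple or singular, and in each case to invoke a statement that is already available.

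First I would treat the case where $P$ is a nonsingular point of $\cC$. Here one appeals to the classical fact that a simple point of an absolutely irreducible plane curve is the centre of precisely one branch. This is exactly the place where Proposition \ref{propA31gen2018} enters: absolute irreducibility excludes the possibility that two distinct components of the curve meet at $P$, and on a single component a simple point carries a single branch (see e.g. \cite[Theorem 4.36]{HKT}). Hence the claim holds at every nonsingular point.

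Second, for the case where $P$ is singular, Lemma \ref{lem2Feb} places $P$ in $PG(2,\mathbb{F}_{q^2})\setminus PG(2,\F)$, and then Lemma \ref{lem31jan2018} applies directly and yields that $P$ is the centre of a unique branch of $\cC$ (indeed of order $q-1$). Since every point of $\cC$ falls into one of these two cases, the corollary follows.

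I do not expect a genuine obstacle here: the statement is a straightforward synthesis of Lemmas \ref{lem2Feb} and \ref{lem31jan2018} together with Proposition \ref{propA31gen2018}. The only point requiring a little care is to quote the correct general fact about branches at simple points of an irreducible plane curve — but this is completely standard in the plane-curve setting, so the proof is essentially a two-line case distinction.
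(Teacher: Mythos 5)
Your proof is correct and follows essentially the same route the paper intends: the corollary is stated there without proof precisely because it is the combination of Lemma \ref{lem2Feb} and Lemma \ref{lem31jan2018} for the singular points with the standard fact that a simple point of the (absolutely irreducible, by Proposition \ref{propA31gen2018}) curve carries a single branch. Your two-case synthesis is exactly the argument the paper leaves implicit.
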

By Corollary \ref{cor31gen2018}, there is a bijection between the points of $\cX$ and those of $\cC$. This shows that $\cC(\mathbb{F}_{q^n})=\cX(\mathbb{F}_{q^n})$ for every $n\ge 1$.
The point-set of $PG(2,\mathbb{F}_{q^3})$ is
 partitioned in three subsets, $\Lambda_1, \Lambda_2$ and $\Lambda_3$
 where $\Lambda_1$ is the set of all points in $PG(2,\F)$, $\Lambda_2$ consists of all points of  $PG(2,\mathbb F_{q^3})\setminus PG(2,\F) $ covered by the lines of $PG(2,\F)$, and $\Lambda_3$ is the set of remaining points in $PG(2,\mathbb{F}_{q^3})$. Obviously, $|\Lambda_1|=q^2+q+1$. A direct computation shows that $|\Lambda_2|=(q^2+q+1)(q^3-q).$ Hence
\begin{equation}
\label{leA14mar2018}
|\Lambda_3|=q^6-q^5-q^4+q^3.
\end{equation}
\begin{lem}
\label{le14mar2018} $\cC(\mathbb{F}_{q^3})=\Lambda_3$.
\end{lem}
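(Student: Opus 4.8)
The plan is to prove $\cC(\mathbb{F}_{q^3})=\Lambda_3$ by showing the two inclusions, using the partition of $PG(2,\mathbb{F}_{q^3})$ into $\Lambda_1,\Lambda_2,\Lambda_3$ together with the description of $D_1$ and $D_2$ already obtained. First note that $\cC(\mathbb{F}_{q^3})$ consists of exactly those points of $PG(2,\mathbb{F}_{q^3})$ at which $D_1$ vanishes but $D_2$ does not (equivalently, $F$ is defined and zero). Indeed, $\cC$ is the curve $F=0$ with $D_1=D_2\cdot F$, and a point $P\in PG(2,\mathbb{F}_{q^3})$ lies on $\cC$ precisely when it is a zero of the numerator but not of the denominator of $F$ in lowest terms, i.e. $D_1(P)=0$ and $D_2(P)\neq 0$; the subtlety that $P$ could be a common zero of $D_1,D_2$ yet still lie on $\cC$ (if the branch has its centre there) is exactly what Lemma~\ref{lem2Feb} and Lemma~\ref{lem31jan2018} control, since such common zeros all lie in $PG(2,\mathbb{F}_{q^2})\subseteq\Lambda_1\cup\Lambda_2$.

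Next I would compute the vanishing loci of $D_1$ and $D_2$ inside $PG(2,\mathbb{F}_{q^3})$ explicitly. For $P=(a:b:c)$ with $a,b,c\in\mathbb{F}_{q^3}$ one has $a^{q^3}=a$, $b^{q^3}=b$, $c^{q^3}=c$, so $D_1(P)$ becomes the determinant of the matrix with rows $(a,a^q,a)$, $(b,b^q,b)$, $(c,c^q,c)$, which has two equal columns and hence $D_1(P)=0$ for every $P\in PG(2,\mathbb{F}_{q^3})$. Thus membership in $\cC(\mathbb{F}_{q^3})$ reduces to the single condition $D_2(P)\neq 0$. Now $D_2(P)=0$ means the vectors $(a,b,c),(a^q,b^q,c^q),(a^{q^2},b^{q^2},c^{q^2})$ are linearly dependent over $\overline{\mathbb{F}}_q$; a standard argument (the kernel of the Moore-type matrix being Frobenius-stable) shows this happens exactly when $P$ lies on a line defined over $\mathbb{F}_q$, equivalently when $P\in\Lambda_1\cup\Lambda_2$. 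Hence $D_2(P)\neq 0$ is equivalent to $P\in\Lambda_3$, and since the singular points of $\cC$ (the common zeros of $D_1$ and $D_2$ that nonetheless lie on $\cC$) are by Lemma~\ref{lem2Feb} all in $PG(2,\mathbb{F}_{q^2})$, none of them lies in $\Lambda_3$ nor adds a point to $\cC(\mathbb{F}_{q^3})$ outside $\Lambda_3$. This yields $\cC(\mathbb{F}_{q^3})=\Lambda_3$.

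Finally, as a consistency check I would note that this forces $|\cC(\mathbb{F}_{q^3})|=|\Lambda_3|=q^6-q^5-q^4+q^3$ by \eqref{leA14mar2018}, matching the value announced in the Introduction for $|\cX(\mathbb{F}_{q^3})|$ via the bijection of Corollary~\ref{cor31gen2018}; this is not needed for the proof but confirms the bookkeeping.

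The main obstacle is the linear-algebra claim that $D_2(P)=0$ for $P\in PG(2,\mathbb{F}_{q^3})$ holds precisely when $P\in\Lambda_1\cup\Lambda_2$, i.e. when $P$ lies on an $\mathbb{F}_q$-rational line. One direction is immediate: if $P$ lies on a line $ux+vy+wz=0$ with $(u:v:w)\in PG(2,\mathbb{F}_q)$, then applying Frobenius $\Phi_q$ shows $(a^q:b^q:c^q)$ and $(a^{q^2}:b^{q^2}:c^{q^2})$ also lie on that same line, so the three rows of $D_2$ are coplanar and $D_2(P)=0$. For the converse, if $D_2(P)=0$ then the three Frobenius-conjugate points are collinear on some line $r$; since the set of these three points is stable under $\Phi_q$ (as $P\in PG(2,\mathbb{F}_{q^3})$ and $\Phi_q$ has order dividing $3$ on such points), the line $r$ is $\Phi_q$-invariant and therefore defined over $\mathbb{F}_q$, so $P\in r(\overline{\mathbb{F}}_q)\cap PG(2,\mathbb{F}_{q^3})$ lies on an $\mathbb{F}_q$-line, placing it in $\Lambda_1\cup\Lambda_2$. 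Care is needed in the degenerate subcases (two of the three conjugate points coincide, or $P$ is already $\mathbb{F}_q$-rational), but these only reinforce the conclusion since such $P$ lie in $\Lambda_1$; I would handle them by a direct inspection of when the Moore-type matrix drops rank.
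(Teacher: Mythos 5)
Your forward inclusion is fine and matches the paper: for $P\in PG(2,\mathbb{F}_{q^3})$ the third column of $D_1$ equals the first, so $D_1(P)=0$, and for $P\in\Lambda_3$ one has $D_2(P)\neq 0$ (your Frobenius-stable-line argument for ``$D_2(P)=0\iff P$ lies on an $\mathbb{F}_q$-rational line'' is correct, including the degenerate cases), whence $F(P)=0$ and $\Lambda_3\subseteq\cC(\mathbb{F}_{q^3})$. The gap is in the reverse inclusion, i.e.\ in ruling out points of $\cC$ in $\Lambda_1\cup\Lambda_2$. You dispose of these by asserting that the common zeros of $D_1$ and $D_2$ lying on $\cC$ \emph{are} the singular points of $\cC$ and then invoking Lemma~\ref{lem2Feb}. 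But Lemma~\ref{lem2Feb} only says that the singular points lie in $PG(2,\mathbb{F}_{q^2})\setminus PG(2,\mathbb{F}_q)$; it says nothing about whether a \emph{nonsingular} point of $\cC$ can lie on a line of $PG(2,\mathbb{F}_q)$ (equivalently, satisfy $D_2(P)=0$), which is exactly the possibility that would put a point of $\cC$ into $\Lambda_2$. That implication is not free: the paper closes it with B\'ezout plus Proposition~\ref{propB30gen} --- each line $L$ of $PG(2,\mathbb{F}_q)$ contains $q^2-q$ points of $PG(2,\mathbb{F}_{q^2})\setminus PG(2,\mathbb{F}_q)$, each a $(q-1)$-fold point of $\cC$, and $(q^2-q)(q-1)=q^3-q^2=\deg\cC$, so $L\cap\cC\subseteq \cC(\mathbb{F}_{q^2})$ and no further intersection (in particular none in $\Lambda_2$) can occur. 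Some argument of this kind (or an equivalent one, e.g.\ via $\nabla D_1=F\nabla D_2+D_2\nabla F$ and the explicit partials of $D_1$) must be supplied; as written your identification is circular.

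Two smaller points. First, you never invoke Proposition~\ref{propA30gen}; it is needed to exclude $\Lambda_1$, since a point of $PG(2,\mathbb{F}_q)$ does lie in $PG(2,\mathbb{F}_{q^2})$ and in $PG(2,\mathbb{F}_{q^3})$, so ``the bad points lie in $PG(2,\mathbb{F}_{q^2})$'' does not by itself keep them off $\cC(\mathbb{F}_{q^3})$. Second, the inclusion ``$PG(2,\mathbb{F}_{q^2})\subseteq\Lambda_1\cup\Lambda_2$'' is false as stated: since $\mathbb{F}_{q^2}\cap\mathbb{F}_{q^3}=\mathbb{F}_q$, the points of $PG(2,\mathbb{F}_{q^2})\setminus PG(2,\mathbb{F}_q)$ are not in $PG(2,\mathbb{F}_{q^3})$ at all; the correct statement, $PG(2,\mathbb{F}_{q^2})\cap PG(2,\mathbb{F}_{q^3})=PG(2,\mathbb{F}_q)=\Lambda_1$, is what you actually need, and conflating the two is what hides the missing B\'ezout step.
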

\begin{proof}
From Proposition \ref{propA30gen}, $\cC(\mathbb{F}_{q^3})\cap \Lambda_1=\emptyset$. Furthermore, if $L$ is a line of $PG(2,\F)$ then $L \cap \cC \subseteq \cC(\mathbb{F}_{q^2})$ as a consequence of Proposition \ref{propB30gen} and of the B\'ezout's theorem. Therefore, $\cC(\mathbb{F}_{q^3})\cap \Lambda_2=\emptyset$ also.
Let $P \in \Lambda_3$, with $P=(\alpha: \beta: \gamma)$. Then $$D_1(P)=\begin{vmatrix} \alpha & \alpha^q & \alpha^{q^3} \\ \beta & \beta^q & \beta^{q^3} \\ \gamma & \gamma^q & \gamma^{q^3} \end{vmatrix}=\begin{vmatrix} \alpha & \alpha^q & \alpha \\ \beta & \beta^q & \beta \\ \gamma & \gamma^q & \gamma \end{vmatrix}=0.$$ On the other hand $D_2(P)\neq 0$, hence $F(P)=0$ and the assertion follows.
\end{proof}

\begin{teorema}
The DGZ curve has genus $\frak{g}=\frac{1}{2}q(q-1)(q^3-2q-2)+1$.
\end{teorema}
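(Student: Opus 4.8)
The plan is to compute $\mathfrak{g}$ from the formula expressing the geometric genus of an absolutely irreducible plane curve of degree $d$ as $\binom{d-1}{2}-\sum_P\delta_P$, the sum running over the singular points $P$ with $\delta_P$ the $\delta$-invariant at $P$, and to feed into it the complete description of the singularities of $\cC$ obtained above. For the DGZ curve $d=q^3-q^2$, and by Lemma \ref{lem2Feb} the singular points are precisely the points of $PG(2,\mathbb{F}_{q^2})\setminus PG(2,\F)$, of which there are $(q^4+q^2+1)-(q^2+q+1)=q^4-q$. So the whole task reduces to evaluating $\delta_P$ at a single such point $P$.

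Fix a singular point $P$. By Lemma \ref{lem31jan2018}, $P$ is the centre of a unique branch $\gamma$, and by the proofs of that lemma and of Proposition \ref{propB30gen}, after the translation carrying $P$ to the origin $\gamma$ has a primitive representation $x(t)=c_1t^{q-1}+\cdots$, $y(t)=d_1t^{q}+\cdots$ with $c_1d_1\neq 0$ and tangent line $y=0$. Since $\gamma$ is the only branch at $P$, the invariant $\delta_P$ equals the number of non-negative integers missing from the value semigroup $\Gamma=\{\ord_t h(x(t),y(t))\mid h\in\overline{\mathbb{F}}_q[[x,y]],\ h(x(t),y(t))\neq 0\}$ of $\gamma$. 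I would then prove that $\Gamma=\langle q-1,q\rangle$. The inclusion $\langle q-1,q\rangle\subseteq\Gamma$ is immediate from $\ord_t x(t)=q-1$ and $\ord_t y(t)=q$. For the reverse inclusion, the monomial $x^iy^j$ contributes to $h(x(t),y(t))$ a term of $t$-order $i(q-1)+jq$ with nonzero coefficient $c_1^id_1^j$, and since $\gcd(q-1,q)=1$ two distinct monomials share the same $t$-order only if that order is at least $q(q-1)$. Hence, for $h$ not vanishing on $\gamma$, $\ord_t h(x(t),y(t))$ equals the least monomial order occurring in $h$ whenever that least order is below $q(q-1)$ — in which case it lies in $\langle q-1,q\rangle$ — and is otherwise at least $q(q-1)$, which exceeds the conductor $(q-1)(q-2)$ of $\langle q-1,q\rangle$ and hence again lies in $\langle q-1,q\rangle$. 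Therefore $\Gamma=\langle q-1,q\rangle$, its number of gaps is $\tfrac12(q-1)(q-2)$, and $\delta_P=\tfrac12(q-1)(q-2)$. Substituting into the genus formula, $\mathfrak{g}=\binom{q^3-q^2-1}{2}-(q^4-q)\cdot\tfrac12(q-1)(q-2)$, and a routine polynomial simplification collapses the right-hand side to $\tfrac12 q(q-1)(q^3-2q-2)+1$; as a sanity check, when $q=2$ all $\delta_P$ vanish and this returns the genus $3$ of the Klein quartic, consistent with Remark \ref{remA16mar2018}.

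The one step carrying real content is the evaluation of $\delta_P$, i.e.\ the identification of the value semigroup of the branch at a singular point with $\langle q-1,q\rangle$; it works precisely because the branch has multiplicity $q-1$ while its tangent meets it with multiplicity $q=(q-1)+1$, so the two governing orders are coprime and leave no room for extra semigroup elements below the conductor. A less elementary alternative would bypass the singularity analysis altogether: take a Sylow $p$-subgroup $S$ of $G\cong PGL(3,q)$, of order $q^3$, determine its short orbits and higher ramification groups on $\cX$, identify the quotient curve $\cX/S$ with the Fermat curve $\mathcal{F}_{q-1}$ of genus $\tfrac12(q-2)(q-3)$, and solve for $\mathfrak{g}$ in the Hurwitz genus formula \eqref{eq1}; this route, however, would need the full ramification picture of the Sylow subgroup rather than the plane-geometric data already in hand.
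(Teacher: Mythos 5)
Your proof is correct, and it follows a genuinely different route from the paper's. You compute the geometric genus from the plane-curve formula $\gg=\binom{d-1}{2}-\sum_P\delta_P$, reducing everything to the local invariant $\delta_P$ at one singular point: since by Lemma \ref{lem31jan2018} the unique branch there has order $q-1$ and tangent intersection $q$, its value semigroup is $\langle q-1,q\rangle$ (your coprimality/conductor argument for the reverse inclusion is sound, because a cancellation of leading terms would force two monomial orders to agree below $q(q-1)$, which is impossible, and anything of order at least $q(q-1)$ already lies beyond the conductor $(q-1)(q-2)$), so $\delta_P=\tfrac12(q-1)(q-2)$ at each of the $q^4-q$ singular points, and the arithmetic indeed collapses to $\tfrac12 q(q-1)(q^3-2q-2)+1$. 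The paper instead verifies that $x$ is a separating variable and computes $\deg\,{\rm{div}}(dx)=2\gg-2$ branch by branch, finding $\ord_\gamma(dx)$ equal to $q-2$ at branches centred at points with $x$-coordinate in $\mathbb{F}_{q^2}\setminus\mathbb{F}_q$, $2q-2$ at those with $x$-coordinate in $\mathbb{F}_q$, $0$ elsewhere in the affine plane, and $-2$ at infinity. Your approach is purely local at the singularities and needs no control of $dx$ at nonsingular points (nor the verification of separability), at the price of invoking the standard facts that $\delta_P$ equals the number of gaps of the value semigroup at a unibranch point and the genus formula for irreducible plane curves; the paper's computation stays entirely within the differential/branch machinery it already uses elsewhere (and its local expansions in cases (a)--(d) are reused later, e.g.\ for the ramification and St\"ohr--Voloch divisors). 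Your suggested alternative via the quotient by a Sylow $p$-subgroup and the Fermat curve would also work once the ramification data of Section \ref{secaut}--\ref{secquotient} are in hand, but, as you note, it is not the shortest path given the plane-geometric information already established.
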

\begin{proof}
From Equation (\ref{eqA30gen2018}),
\begin{equation*}
\dfrac{\partial F}{\partial y}=\left(\dfrac{\partial D_1}{\partial y}D_2-D_1\dfrac{\partial D_1}{\partial y}\right)\dfrac{1}{D_2^2}.
\end{equation*}
In affine coordinates, $f_y(x,y)=$
\begin{equation*}
\frac{(x^{q^3+q}-x^{q^3+1})(y^{q^2}-y^q)+(x^{q^2+q}-x^{q^2+1})(y^q-y^{q^3})+(x^{2q}-x^{q+1})(y^{q^3}-y^{q^2})}{D_2(x,y,1)^2}.
\end{equation*}
This shows that $f_y$ does not vanish
on the generic points of $\mathcal C$. Hence, $x$ is a separating variable of $\K(\cC)$. Thus $\text{div}(dx)\neq 0$ and $\deg(\text{div}(dx))=2\frak{g(\mathcal{X})}-2$; see \cite[Theorem 5.50]{HKT}.
Let $P$ be a point of $\cC$. Four cases are distinguished according as
\begin{itemize}
\item[(a)] $P=(a:b:1)$ and $a\in \mathbb{F}_{q^2}\setminus \mathbb{F}_q$;
\item[(b)] $P=(a:b:1)$ and $a\in \mathbb{F}_q$;
\item[(c)] $P=(a:b:1)$ and $a\notin \mathbb{F}_{q^2}$;
\item[(d)] $P=(a:b:0)$.
\end{itemize}
In Case (a), Proposition \ref{lem31jan2018} shows that the (unique) branch $\gamma$ centred at $P$ has a primitive representation $$\begin{cases} x(t)=a+c_1t^{q-1} \\ y(t)=b+d_1t^q+\ldots \end{cases} .$$ In particular, $\ord_\gamma(dx)=(q-2)$.

 In Case (b) some more efforts are needed. In this case, the (unique) branch centered at $P$ has a primitive representation $$ \begin{cases} x(t)=a+c_1t^{q} \\ y(t)=b+d_1t^{q-1}+\ldots \end{cases} .$$ Since $D_1(x,y,z)=D_2(x,y,z)F(x,y,z)$, we have that $D_1(x(t),y(t),1)=0$ in $\K[[t]]$. Thus $dD_1(x(t),y(t),1)/dt=0$ in $\K[[t]]$. From this, by a direct computation,  $$x'(t)=\frac{x(t)^qy'(t)-x(t)^{q^3}y'(t)}{y(t)^q-y(t)^{q^3}}=\frac{t^{(q+2)(q-1)}g_1(t)}{t^{q(q-1)}g_2(t)}=t^{2q-2}g(t).$$ Therefore $\ord_\gamma(dx)=2q-2$.

In Case (c), $P$ is a simple point of $\mathcal C$. Again, let $\gamma$ be the unique branch of $\mathcal C$ centered at $P$.
 Therefore, $\ord_\gamma(dx)=0$ unless the tangent line of $\mathcal C$ at $P$ is vertical. That line has equation $x=a$. This yields that the univariate polynomial
$$D(y)=D_1(a,y,1)=\begin{vmatrix} a & a^q & a^{q^3} \\ y & y^q & y^{q^3} \\ 1 & 1 & 1 \end{vmatrix}
$$
has a multiple root. Since $D'(y)=(a-a^{q^2})^q$, this gives $a\in \mathbb F_{q^2}$, a contradiction.

 In Case (d),
 $$d\left(\frac{1}{x(t)}\right)=\frac{1}{x(t)^2}\frac{dx(t)}{dt}=t^{-2}.$$
 Hence $\ord_\gamma(dx)=-2$.

 Summing up this gives  $\deg(dx)=(q-2)q^2(q^2-q)+(2q-2)(q^2-q)q-2(q^2-q)=2\frak{g}(\cC)-2$ whence the formula for $\frak{g}(\cC)$ follows.
\end{proof}

\section{Action of a Sylow $p$-subgroup of $G$ and its normalizer on the DGZ-curve}
\label{secaut}
As we have pointed out after Lemma \ref{lemA30gen2018}, $F(x,y,z)$ is a $GL(3,q)$-invariant homogeneous polynomial.
Therefore, $PGL(3,q)$ is a subgroup of $\aut(\cC)$. In terms of the function field $\K(x,y)$ of $\cC$, this shows that the map $\varphi_{\alpha,\beta}$ defined by $$\varphi_{\alpha,\beta}=\begin{cases} x'=x+\alpha\\y'=y+\beta\end{cases} \quad \alpha,\beta \in \F,$$ is an automorphism of $\K(x,y)$. These automorphisms form the translation group $T$ of $\aut(\cC)$, and the fixed points of $T$ are precisely the $q^2-q$ points of $\cC(\mathbb{F}_{q^2})\cap \ell_\infty$, where $\ell_\infty$ is the line $z=0$.

Let $Q$ be the Sylow $p$-subgroup of $PGL(3,q)$ whose elements have matrix representation
$$\left(\begin{matrix}
1 & 0 & \alpha \\
\gamma & 1 & \beta \\
0 & 0 & 1 \\
\end{matrix}\right)
$$ with $\alpha, \beta, \gamma$ in $\mathbb{F}_q$.
A straightforward computation shows that the elements of $Q$ fix the point $Y_\infty=(0:1:0)$, and those with $\gamma\neq 0$ have no further fixed point in $PG(2,\K)$.

Now look at $Q$ as a subgroup of $\aut(\cC)$. Then $Q$ contains $T$ and the subgroup $\Phi$ consisting of all maps $$\phi_\gamma=\begin{cases} x'=x \\ y'=\gamma x+y \end{cases},$$ with $\gamma \in \F$.
More precisely, $Q=T\rtimes \Phi$ where $|Q|=q^3, |T|=q^2$ and $|\Phi|=q$. Also, the quotient group $\bar{Q}=Q/T$ is an elementary abelian group of order $q$.

Since $Y_\infty\not\in \cC$, it turns out that no element in $Q\setminus T$ fixes a point in $\cC$.

Furthermore, the maps $$\psi_{\lambda,\mu}=\begin{cases} x'=\lambda x\\y'=\mu y\end{cases},$$ where $\lambda, \mu \in \Fc$ are automorphisms of $\cC$ and they form an abelian subgroup $\Psi$ of $\aut(\cC)$ of order $(q-1)^2$. When $\lambda=\mu$, the $q-1$ maps $\psi_{\lambda,\lambda}$ form the dilatation group $D$ of $\aut(\cC)$, and the fixed points of $D$ are the $q^2-q$ points of $\cC(\mathbb{F}_{q^2})\cap \ell_\infty$. Also, the quotient group $\Psi/D$ is a cyclic group of order $q-1$.

A straightforward computation shows that $\Psi$ is contained in the normalizer of $Q$. Therefore, the group generated by them is the semidirect product $Q\rtimes \Psi$. Furthermore, $\Psi$ is also contained in the normalizer of $T$.
Hence the quotient group $(Q\rtimes \Psi)/T$ is isomorphic to the semidirect product $\bar{Q}\rtimes \bar{\Psi}$ where $\bar{\Psi}=(\Psi T)/T$. Observe that $\bar{Q}\rtimes \bar{\Psi}$ can be regarded as an automorphism group of the projective line $\ell_\infty$. Doing so, if $\ell_\infty=\{(1:m:0)| m\in \K\}\cup \{(0:1:0)\}$ then
$\bar{Q}\rtimes \bar{\Psi}$ consists of all maps such that $(1:m:0)\mapsto (1:am+b:0)$ with $a,b$ ranging over $\K$ and $a\neq 0$. Under the action of $\bar{Q}\rtimes \bar{\Psi}$, $\cC(\mathbb{F}_{q^2})\cap \ell_\infty$ splits into $q-1$ orbits $\Delta_1,\ldots, \Delta_{q-1}$ each of size $q$.

\begin{remark}
\label{rem15mar2018}{\em{
Let $S_p$ be a Sylow $p$-subgroup of $\aut(\cC)$ containing $Q$. Assume that $S_p$ is larger than $Q$. From Result \ref{resmachi}, there exists a subgroup $U\leq S_p$ such that $Q\trianglelefteq  U$ and that $[U:Q]=p^r$ for some $r\geq 1$. We show that $T\trianglelefteq  U$. For any $u\in U$ and $t\in T$, the conjugate $t'=utu^{-1}$ of $t$ by $u$ has $q^2-q$ fixed points. On the other hand, no element in $Q\setminus T$ has a fixed point in $\cC$. Therefore, $t'\in T$, and hence
$T\trianglelefteq  U$. It turns out that $U$ leaves $\cC(\mathbb{F}_{q^2})\cap \ell_\infty$ invariant. Let $\hat{U}=U/T$, $\hat{Q}=Q/T$ and $\bar{U}=U/Q$. Then
$\hat{U}$ is the permutation group induced by $U$ on $\cC(\mathbb{F}_{q^2})\cap \ell_\infty$. From $\hat{Q}\trianglelefteq  \hat{U}$, the $Q$-orbit partition $\{\Delta_1,\ldots, \Delta_{q-1}\}$ is left invariant by $\hat{U}$. Since this partition has as many as $q-1$ members while $p$ divides $|\hat{U}|$, this yields that $\hat{U}$ must fixes at least two members. In other words, $\bar{U}$ fixes at least two points of the quotient curve $\mathcal{Z}=\cC/Q$ that will be investigated in Section \ref{secquotient}.
}}
\end{remark}
\section{Some quotient curves of the DGZ-curve}
\label{secquotient}
Let $\cY=\cC/T$ be the quotient curve of the DGZ curve with respect to the group of translations $T$.
\begin{prop}
\label{prop8Feb}
Let $\xi=x^q-x$ and $\eta=y^q-y$. Then the function field of $\mathcal{Y}$ is $\K(\xi,\eta)$ with $H(\xi,\eta)=0$ where $H(X,Y)\in \F[X,Y]$ is the absolutely irreducible polynomial such that
\begin{equation}
\label{eq8feb} H(X,Y)=\frac{X^{q^2-1}-Y^{q^2-1}}{X^{q-1}-Y^{q-1}}+1.
\end{equation}
\end{prop}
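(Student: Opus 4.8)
The plan is to compute the function field of $\cY = \cC/T$ directly by exhibiting $T$-invariant functions and checking they generate a subfield of the right index. First I would observe that since $T$ is the additive group $\{(x,y)\mapsto(x+\alpha,y+\beta):\alpha,\beta\in\F\}$ of order $q^2$, the functions $\xi=x^q-x$ and $\eta=y^q-y$ are clearly $T$-invariant: $\xi$ and $\eta$ lie in $\K(\cC)^T$. Since $[\K(x,y):\K(\xi,\eta)]=q^2=|T|$ (because $x$ satisfies $x^q-x-\xi=0$ over $\K(\xi)$, an Artin--Schreier-type relation of degree $q$, and similarly for $y$ over $\K(\xi,\eta)$), and since $[\K(\cC):\K(\cC)^T]=|T|=q^2$ by Galois theory, it follows that $\K(\cC)^T=\K(\xi,\eta)$ provided $\K(\xi,\eta)\subsetneq\K(x,y)$ genuinely has index $q^2$, i.e. provided $x$ has degree exactly $q$ over $\K(\xi,\eta)$. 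This last point needs a small argument: one must rule out that $x\in\K(\xi,\eta)$ or that the minimal polynomial of $x$ drops degree, which should follow from the fact that $T$ acts faithfully on $\cC$ and hence $\K(\cC)^T$ is a proper subfield of index exactly $|T|$.

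Next I would derive the defining equation. Starting from $F(x,y,z)=D_1/D_2$ and the affine form, recall from the computation of $D_1$ that
\[
D_1(x,y,1)=\begin{vmatrix} x & x^q & x^{q^3} \\ y & y^q & y^{q^3} \\ 1 & 1 & 1 \end{vmatrix},
\qquad
D_2(x,y,1)=\begin{vmatrix} x & x^q & x^{q^2} \\ y & y^q & y^{q^2} \\ 1 & 1 & 1 \end{vmatrix}.
\]
The key algebraic identity is that subtracting the third column from the first (and using additive-polynomial behaviour of $t\mapsto t^q$) rewrites these $3\times 3$ determinants in terms of $\xi=x^q-x$, $\eta=y^q-y$ and their Frobenius twists: $x^{q^2}-x=\xi^q+\xi$ (wait, more precisely $x^{q^2}-x=(x^q-x)^q+(x^q-x)=\xi^q+\xi$) and $x^{q^3}-x=\xi^{q^2}+\xi^q+\xi$, and similarly for $y$. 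Performing column operations $C_1\leftarrow C_1-C_3$, $C_2\leftarrow C_2-C_3$ on both determinants reduces each to a $2\times2$ determinant in these differences; one finds $D_1(x,y,1)$ and $D_2(x,y,1)$ become, up to a common factor, $2\times 2$ determinants in $\xi,\eta$ and their $q$-power and $q^2$-power Frobenius images. Writing these out and simplifying the quotient $D_1/D_2$, the equation $F=0$ becomes a polynomial relation purely in $\xi$ and $\eta$, which after normalization is exactly $H(\xi,\eta)=0$ with $H$ as in \eqref{eq8feb}. The identity $\dfrac{X^{q^2-1}-Y^{q^2-1}}{X^{q-1}-Y^{q-1}}+1$ should emerge after factoring $X^{q^2-1}-Y^{q^2-1}=(X^{q-1}-Y^{q-1})\sum_{i}(X^{q-1})^{i}(Y^{q-1})^{q-2-i}$ and matching with the determinant expansion; the "$+1$" term will correspond to one of the monomials split off in the determinant simplification.

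Finally I would verify absolute irreducibility of $H(X,Y)$, which is needed for $\cY$ to be a genuine irreducible curve. Since $\cC$ is absolutely irreducible (Proposition~\ref{propA31gen2018}) and $\cY$ is a quotient of $\cC$ by a group, $\cY$ is automatically irreducible, so $\K(\xi,\eta)$ is a field and the minimal polynomial of $\eta$ over $\K(\xi)$ — which is $H(\xi,Y)$ up to a unit — is irreducible over $\K(\xi)$; absolute irreducibility of $H$ then follows because the constant field of $\K(\cC)$ is $\K$ itself. Alternatively one can check directly that $H(X,Y)\cdot(X^{q-1}-Y^{q-1}) = X^{q^2-1}-Y^{q^2-1}+X^{q-1}-Y^{q-1}$ has no factorization compatible with a nontrivial splitting, using that $\gcd$ considerations among cyclotomic-type factors of $X^{q^2-1}-Y^{q^2-1}$ and $X^{q-1}-Y^{q-1}$ force irreducibility of the quotient plus one.

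\textbf{Main obstacle.} The delicate step is the explicit determinant manipulation that turns $D_1/D_2$ into a rational expression in $\xi$ and $\eta$ alone and then recognizes it as $H(\xi,\eta)$; one must be careful that the column operations are performed consistently on $D_1$ and $D_2$ so that the common extraneous factors cancel in the quotient, and that the bookkeeping of Frobenius twists $\xi,\xi^q,\xi^{q^2}$ (respectively $\eta,\eta^q,\eta^{q^2}$) correctly collapses to the stated closed form. The index computation $[\K(x,y):\K(\xi,\eta)]=q^2$ is routine once phrased via the two successive degree-$q$ additive extensions, but it does deserve an explicit sentence since it is what pins down $\K(\cC)^T$ exactly rather than merely a subfield containing $\K(\xi,\eta)$.
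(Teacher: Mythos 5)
Your treatment of the generation statement and of the equation follows the paper's own route: $\xi,\eta$ are visibly $T$-invariant, the two degree-$q$ relations $x^q-x=\xi$ and $y^q-y=\eta$ give $[\K(\cC):\K(\xi,\eta)]\le q^2$, and $[\K(\cC):\K(\cC)^T]=|T|=q^2$ then pins down $\K(\cC)^T=\K(\xi,\eta)$ (your worry about the degree dropping is already answered by this comparison, so no extra argument is needed). The determinant manipulation you flag as the main obstacle is exactly what the paper carries out, and it does close: after the column operations, $D_1(x,y,1)=\xi(\eta^{q^2}+\eta^q+\eta)-\eta(\xi^{q^2}+\xi^q+\xi)$ and $D_2(x,y,1)=\xi\eta^q-\eta\xi^q$, and dividing numerator and denominator by $\xi\eta$ turns $F(x,y,1)=0$ into $H(\xi,\eta)=0$.

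The genuine gap is in your irreducibility argument. You assert that the minimal polynomial of $\eta$ over $\K(\xi)$ ``is $H(\xi,Y)$ up to a unit''; but that is precisely what has to be proved: from $H(\xi,\eta)=0$ you only know the minimal polynomial divides $H(\xi,Y)$, and nothing you have established yields the degree equality $[\K(\xi,\eta):\K(\xi)]=\deg_Y H=q^2-q$. The appeal to the constant field being $\K$ does no work ($\K$ is algebraically closed, so absolute irreducibility is just irreducibility over $\K$), and the fallback ``gcd considerations among cyclotomic-type factors'' is not an argument. The step can be repaired: since $F$ is absolutely irreducible (Proposition \ref{propA31gen2018}) and $\deg_y F(x,y,1)=q^3-q^2$, one has $[\K(\cC):\K(x)]=q^3-q^2$, hence $[\K(\cC):\K(\xi)]=q^4-q^3$ and $[\K(\xi,\eta):\K(\xi)]=(q^4-q^3)/q^2=q^2-q$, so $H(\xi,Y)$ is irreducible over $\K(\xi)$; one must then still exclude a factor of $H$ lying in $\K[X]$, which is immediate because the coefficient of $Y^{q(q-1)}$ in $H$ equals $1$. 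The paper's own argument is shorter and worth knowing: if $L$ is an absolutely irreducible factor of $H$ with $L(\xi,\eta)=0$, then $M(X,Y)=L(X^q-X,Y^q-Y)$ vanishes on $\cC$, so the absolutely irreducible $F$ of degree $q^3-q^2$ divides it, giving $q\deg L=\deg M\ge q^3-q^2=q\deg H$ and hence $\deg L=\deg H$.
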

\begin{proof} Since $\varphi_{\alpha,\beta}(\xi)=\varphi_{\alpha,\beta}(x^q)-\varphi_{\alpha,\beta}(x)=x^q+\alpha^q-(x+\alpha)=x^q-x=\xi$, $\K(\cY)$ contains $\xi$. Similarly, $\eta\in \K(\cY)$. Therefore, $\K(\xi,\eta)$ is a subfield of $\K(\cY)$. On the other hand
$[\K(\cC):\K(\cY)]=q^2.$ As $[\K(\cC):\K(\xi,y)]=q$ and $[\K(\xi,y):\K(\xi,\eta)]=q$, this shows that $[\K(\cC):\K(\xi,\eta)]=q^2$, whence $\K(\xi,\eta)=\K(\mathcal{Y})$ follows. Since $q-1$ divides $q^2-1$, $H(X,Y)$ is a polynomial whose degree equals $q^2-q$. Furthermore,
$$F(x,y,1)=\frac{D_1(x,y,1)}{D_2(x,y,1)}=
\frac{(\xi^{q^2}+\xi^q+\xi)(\eta^{q^2}+\eta^q)-
(\eta^{q^2}+\eta^q+\eta)(\xi^{q^2}+\xi^q)}{(\xi^q+\xi)\eta^q-(\eta^q+\eta)\xi^{q}}\,.$$
Observe that right hand side can also be written as
$$\frac{\xi^{q^2-1}-\eta^{q^2-1}+\xi^{q-1}-\eta^{q-1}}{\xi^{q-1}-\eta^{q-1}}\,.$$
 As $F(x,y,1)=0$, this yields $H(\xi,\eta)=0$. Take an absolutely irreducible factor $L(X,Y)$ of $H(X,Y)$ so that $L(\xi,\eta)=0$. Then the polynomial $M(X,Y)=L(X^q-X,Y^q-Y)$ has degree at most $q^3-q^2$, and $M(x,y)=0$. As  $F(x,y)=0$ this yields that $\deg\,F(X,Y)\leq \deg M(X,Y)$ whence $\deg M(X,Y)=q^3-q^2$ follows. Hence, $\deg H(X,Y)=\deg L(X,Y)$ showing that  $H(X,Y)$ is absolutely irreducible.
\end{proof}
\begin{prop}
\label{prop5feb2017} Let $P$ be a point of $\cX$ where the cover $\cX|\cY$ ramifies. Then the second ramification
group at $P$ is trivial.
\end{prop}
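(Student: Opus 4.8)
The plan is: first identify the ramification points of the cover $\cX\to\cY=\cX/T$ and check that $T$ fixes each of them totally; then it will suffice to prove that $i_P(\tau):=\ord_P(\tau(t)-t)$ equals $2$ for every $\tau\in T\setminus\{1\}$ and a uniformizer $t$ at $P$, and this I would do by a local computation in a convenient affine chart.

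For the first part: by Corollary \ref{cor31gen2018} every point of $\cC$ carries a unique branch, so an element of $\aut(\cC)$ fixes a place of $\cX$ precisely when it fixes the corresponding point of $\cC$. The points of $\cC$ fixed by $T$ are, as recalled in Section \ref{secaut}, the $q^2-q$ points of $\cC(\FF)\cap\ell_\infty$, namely the points $(1:m:0)$ with $m\in\FF\setminus\F$ (these lie on $\cC$ by Lemma \ref{lem2Feb}), and no nontrivial translation $\tau_{\alpha,\beta}$ fixes any other point of $PG(2,\K)$ (it fixes exactly $\ell_\infty$). Hence the ramification locus of $\cX\to\cY$ consists exactly of the places over these $q^2-q$ points, each fixed by the whole of $T$; so $T_P=T$, and since $T$ is a $p$-group, $T_P=T_P^{(1)}=T$ by Result \ref{res74}. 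Thus $T_P^{(2)}=\{\tau\in T:\ i_P(\tau)\ge3\}$, and the goal becomes $i_P(\tau)=2$ for all $\tau\neq1$.

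Now fix $P=(1:m:0)$ with $m\in\FF\setminus\F$ and pass to the chart $X\neq0$, with affine coordinates $u=z/x$, $v=y/x$, so that $P=(u,v)=(0,m)$ and $x=1/u$, $y=v/u$. The tangent of $\cC$ at $P$ is $\ell_\infty$: indeed $T_P=T$ fixes the unique branch at $P$, hence fixes its tangent, and were that tangent an affine line through $P$ it would have direction $(1,m)$, while a nontrivial $\tau_{\alpha,\beta}$ preserves such a line only if $(\alpha,\beta)$ is proportional to $(1,m)$, impossible for $\alpha,\beta\in\F$ since $m\notin\F$. So Lemma \ref{lem31jan2018} gives $\ord_P(u)=I(P,\cC\cap\ell_\infty)=q$ and $\ord_P(v-m)=q-1$ (the line $v=m$ is not tangent at $P$); in particular the branch at $P$ reads $u=bt^q+\cdots$, $v=m+at^{q-1}+\cdots$ with $a,b\neq0$. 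Since $\tau_{\alpha,\beta}$ acts by $u\mapsto u/(1+\alpha u)$ and $v\mapsto(v+\beta u)/(1+\alpha u)$, one gets
\[
\tau_{\alpha,\beta}(v)-v=\frac{u\,(\beta-\alpha v)}{1+\alpha u}.
\]
For $\tau\neq1$ we have $\beta-\alpha m\neq0$ (otherwise $\alpha m=\beta\in\F$ forces $\alpha=\beta=0$), so $\beta-\alpha v$ and $1+\alpha u$ are units at $P$ and $\ord_P(\tau(v)-v)=\ord_P(u)=q$. On the other hand, expanding along the branch by Hasse derivatives, $\tau(v)-v=\sum_{l\ge1}D^{(l)}(v-m)\,(\tau(t)-t)^l$; the $l=1$ term has order $\ord_P(D^{(1)}(v-m))+i_P(\tau)=(q-2)+i_P(\tau)$, since $\binom{q-1}{1}=q-1$ is nonzero mod $p$, and, because $i_P(\tau)\ge2$, it strictly dominates all the terms with $l\ge2$. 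Comparing, $i_P(\tau)=2$ for every $\tau\neq1$, hence $T_P^{(2)}=\{1\}$.

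The one point requiring a little care is the last domination claim, i.e. that the higher terms of the branch expansion of $\tau(v)-v$ cannot lower the valuation below $(q-2)+i_P(\tau)$; this is routine, the order of the $l$-th term minus that of the $l=1$ term being $(l-1)(i_P(\tau)-1)\ge1$ for $2\le l\le q-1$ and at least $(q-1)i_P(\tau)-(q-2)>0$ for $l\ge q$. Everything else is bookkeeping with facts already established. Equivalently, the conclusion can be packaged as $d_P=\sum_{\tau\in T\setminus\{1\}}i_P(\tau)=2(q^2-1)$, which simultaneously exhibits the full ramification filtration $T_P^{(0)}=T_P^{(1)}=T$ and $T_P^{(i)}=\{1\}$ for $i\ge2$.
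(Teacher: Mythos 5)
Your proof is correct, and it reaches the conclusion by a slightly different route than the paper, so a comparison is worthwhile. Both arguments share the same skeleton: the ramification points are the $q^2-q$ points of $\cC(\FF)\cap\ell_\infty$, each totally ramified under $T$, and one passes to a chart in which such a point becomes affine so that the branch data of Lemma \ref{lem31jan2018} (transported by an element of $PGL(3,\F)$) give the local orders $q$ and $q-1$; the translation is then written out in the new coordinates, exactly as you do with $u=z/x$, $v=y/x$, which is the same normalization the paper achieves with the coordinate swap $\sigma$. The difference is in the final step: the paper exhibits an explicit uniformizer $\bar t=x/(y-b)$ (in your coordinates, $u/(v-m)$), substitutes the branch parametrization (\ref{branchP}) into $\tau(\bar t)-\bar t$, and reads off order $2$ directly from the resulting series $t^2+h(t)$; you never construct a uniformizer, but instead compute $\ord_P(\tau(v)-v)=\ord_P(u)=q$ and convert this into $i_P(\tau)$ via the Hasse--Taylor expansion, i.e.\ the standard fact that $\ord_P(\tau(f)-f)=\ord_P(f)-1+i_P(\tau)$ whenever $p\nmid\ord_P(f)$ and $i_P(\tau)\ge 2$; your domination estimate for the terms with $l\ge 2$ and the observation that $q-1\not\equiv 0\pmod p$ are exactly the points where this needs checking, and they are checked correctly. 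What each approach buys: the paper's computation is more explicit and self-contained, while yours avoids verifying that a candidate function is a local parameter and does not depend on leading coefficients; moreover, your identification of the tangent at $P$ as $\ell_\infty$ via $T$-invariance makes explicit something the paper leaves implicit when it transports the branch (\ref{branchP}), and your closing value $d_P=2(q^2-1)$ is precisely what the paper feeds into the Hurwitz formula to compute $\gg(\cY)$.
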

\begin{proof} In terms of $\cC$, the ramification points of the cover $\cX|\cY$ are the fixed points of $T$. They are exactly the points of $\cC$ lying on the line $z=0$ at infinity. Therefore, $P=(1:b:0)$ with $b\in \FF\setminus \F$. As in the proof of Proposition \ref{propB30gen}, $P$ is taken to a point $R=(0:b:1)$
by the linear map $\sigma$ with matrix representation $$\Sigma=\begin{pmatrix} 0 & 0 & 1 \\ 0 & 1 & 0 \\ 1 & 0 & 0 \end{pmatrix}. $$
The image $\sigma(\cC)$ is an algebraic plane curve $\mathcal{D}$ isomorphic to $\cC$.
Furthermore, $\varphi_{\alpha,\beta}$, regarded as a linear map preserving $\cC$, has matrix representation
$$\Lambda = \begin{pmatrix} 1 & 0 & \alpha \\ 0 & 1 & \beta \\ 0 & 0 & 1 \end{pmatrix} .$$
 Since $$\Sigma^{-1}\Lambda\Sigma=\begin{pmatrix} 1 & 0 & 0 \\ \beta & 1 & 0 \\ \alpha & 0 & 1 \end{pmatrix},$$  the map
 $$\tilde{\varphi}_{\alpha,\beta}:(x,y)\mapsto
\left(\frac{x}{\alpha x+1},\frac{y+\beta x}{\alpha x+1}\right)$$ is an automorphism of $\mathcal{D}$.
 Therefore, $R$ is a point of the algebraic plane curve $\mathcal{D}=\sigma(\cC)$, and the maps $\tilde{\varphi}_{\alpha,\beta}$ with $\alpha,\beta \in \F$ form a subgroup $\tilde{T}$ of $\aut(\mathcal{D})$. We have to determine the second ramification group of $\tilde{T}$ at $R$. For this purpose, take $\bar{t}=\frac{x}{y-b}$ for a local parameter at $R$, and  compute $v_{R}(\tilde{\varphi}_{\alpha,\beta}(\bar{t})-\bar{t})$. A direct computation yields $$\tilde{\varphi}_{\alpha,\beta}(\bar{t})-\bar{t}=\tilde{\varphi}_{\alpha,\beta}\left(\frac{x}{y-b}\right)-\frac{x}{y-b}=\frac{x}{\alpha x+1}\cdot\frac{\alpha x+1}{\beta x+y-b(\alpha x+1)}-\frac{x}{y-b}=$$$$=x^2 \cdot \frac{(b\alpha-\beta)}{(x(\beta-b\alpha)+y-b)(y-b)}.$$ From (\ref{branchP}) and Corollary \ref{cor31gen2018}, $\mathcal{D}$ has a unique branch $\bar{\gamma}$ centred at $R$ with primitive representation $$\begin{cases} x(t)=c_2t^{q} \\ y(t)=b+d_2t^{q-1}+\ldots \end{cases},$$ then
$$\tilde{\varphi}_{\alpha,\beta}(\bar{t})-\bar{t}=\frac{c_2t^{2q}(b\alpha-\beta)}{(c_2t^q(\beta-b\alpha)+d_2t^{q-1})t^{q-1}+\ldots}=t^2+ h(t)$$ with $\text{ord}(h)>2$. So $v_{R}(\tilde{\varphi}_{\alpha,\beta}(\bar{t})-\bar{t})=2$ and the assertion follows.
\end{proof}
\begin{prop}
The curve $\cY$ has genus $\gg(\cY)=\frac{1}{2}(q-1)(q^2-2q-2)+1$.
\end{prop}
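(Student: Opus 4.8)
The plan is to apply the Hurwitz genus formula \eqref{eq1}--\eqref{eq1bis} to the Galois cover $\cX\to\cY=\cC/T$, which has degree $|T|=q^2$, starting from the value $\gg(\cC)=\tfrac12 q(q-1)(q^3-2q-2)+1$ established above. The two ingredients needed are the ramification locus of $T$ on $\cX$ and the local contributions $d_P$.

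First I would pin down the ramification locus. Writing a translation $\varphi_{\alpha,\beta}$ projectively as $(x:y:z)\mapsto(x+\alpha z:y+\beta z:z)$, one sees at once that a nontrivial $\varphi_{\alpha,\beta}$ fixes a point of $PG(2,\K)$ only if that point lies on $\ell_\infty$, in which case it is fixed; hence the fixed locus of $T$ on $\cC$ is exactly $\cC\cap\ell_\infty$, which as recorded in Section~\ref{secaut} consists of the $q^2-q$ points of $\cC(\FF)\cap\ell_\infty$. By Corollary~\ref{cor31gen2018} each of these points carries a single branch, so they are precisely the $q^2-q$ ramification points $P$ of the cover $\cX\to\cY$, each with $G_P=T$ and $|G_P|=q^2$.

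Next I would compute $d_P$. Since $T$ is a $p$-group, Result~\ref{res74} forces the prime-to-$p$ complement of $G_P$ to be trivial, so $G_P^{(0)}=G_P^{(1)}=T$; and $G_P^{(2)}=1$ by Proposition~\ref{prop5feb2017}. (In fact the proof of Proposition~\ref{prop5feb2017} shows directly that $v_P(g(\bar t)-\bar t)=2$ for \emph{every} nontrivial $g\in T$ — this uses $b\notin\F$, which makes $\beta-b\alpha\neq 0$ — and this single computation gives both $G_P^{(1)}=T$ and $G_P^{(2)}=1$.) Hence $d_P=(|G_P^{(0)}|-1)+(|G_P^{(1)}|-1)=2(q^2-1)$ at each of the $q^2-q$ ramification points, so $D(\cX|\cY)$ has degree $(q^2-q)\cdot 2(q^2-1)$.

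Finally I would substitute into \eqref{eq1}: from $2\gg(\cC)-2=q(q-1)(q^3-2q-2)$ and $|T|=q^2$ one gets
\[
q(q-1)(q^3-2q-2)=q^2\bigl(2\gg(\cY)-2\bigr)+2(q^2-q)(q^2-1),
\]
and dividing by $q$ and collecting terms (the bracket becomes $(q^3-2q-2)-2(q^2-1)=q(q^2-2q-2)$) yields $2\gg(\cY)-2=(q-1)(q^2-2q-2)$, i.e. $\gg(\cY)=\tfrac12(q-1)(q^2-2q-2)+1$. The only step requiring real work is the evaluation of $d_P$, but that has essentially already been carried out in the proof of Proposition~\ref{prop5feb2017}; the rest is bookkeeping with the Hurwitz formula.
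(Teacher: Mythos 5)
Your proposal is correct and follows essentially the same route as the paper: identify the ramification locus of $T$ as the $q^2-q$ points of $\cC$ on $\ell_\infty$, use Proposition \ref{prop5feb2017} (together with $G_P=G_P^{(1)}=T$, since $T$ is a $p$-group) to get $d_P=2(q^2-1)$, and conclude by the Hurwitz genus formula. Your added remark that the computation in Proposition \ref{prop5feb2017} applies to every nontrivial translation because $b\notin\F$ forces $b\alpha-\beta\neq 0$ is a correct and welcome clarification of a point the paper leaves implicit.
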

\begin{proof}From the proof of Proposition \ref{prop5feb2017}, the cover $\cX|\mathcal{Y}$ ramifies at the points of $\cC$ on the line $\ell_\infty$. This, together with Proposition \ref{prop5feb2017}, gives
$$\sum \limits_{P\in \ell_{\infty}}d_P=\sum \limits_{P\in \ell_{\infty}} (\vert G_P^{(i)} \vert -1)=\sum \limits_{P\in \ell_{\infty}}((\vert G_P^{(0)} \vert -1)+(\vert G_P^{(1)} \vert -1))=2(q^2-1)(q^2-q).$$
Now, from the Hurwitz genus formula, the assertion follows.
\end{proof}
Since the normalizer of $T$ is larger than $T$, some more quotient curves of $\cY$ (and hence of $\cX$) arise.

As pointed out in Section \ref{secaut}, a subgroup of $\text{Aut}(\mathcal{Y})$ of order $q$ is $\Phi :=\lbrace \phi_\gamma \mid \gamma\in \mathbb{F}_q \rbrace$, where
$$
\phi_\gamma:(\xi,\eta)\mapsto (\xi,\gamma\xi+\eta).
$$
Let $\cZ=\mathcal{Y}/\Phi$ be the quotient curve of $\mathcal{Y}$ with respect to $\Phi$. To find an equation
for $\cZ$ it is useful to represent $\K(\mathcal{Y})$ in the form $\K(v,w)$ with $v=\eta \xi^{-1}$ and $w=\xi^{-1}$, equivalently, $\xi=w^{-1}$ and $\eta=vw^{-1}$. From $H(\xi,\eta)=0$, we have $M(v,w)=0$ where $M(X,Y)$ is the absolutely irreducible polynomial defined by
\begin{equation} \label{eqc2} M(X,Y)=\frac{X-X^{q^2}}{X-X^q}+Y^{q^2-q}=1+X^{q-1}-X^{q(q-1)}+Y^{q(q-1)}.
\end{equation}
This shows that $\K(\mathcal{Y})=\K(v,w)$ with $M(v,w)=0$. With this notation,
$$
\phi_\gamma:(v,w)\mapsto (v+\gamma,w).
$$
\begin{prop}
Let $\theta:=v-v^q$ and $\sigma:=w$. Then the function field of $\mathcal{Z}$ coincides with the function field $\mathbb{K}(\theta,\sigma)$ defined by
\begin{equation}
\label{eqc3} 1+\theta^{q-1}+\sigma^{q(q-1)}=0,
\end{equation}
that is $\mathcal{Z}$ has equation $R(X,Y)=1+X^{q-1}+Y^{q(q-1)}=0.$
\end{prop}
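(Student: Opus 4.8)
The plan is to compute the fixed field $\K(\cZ)=\K(v,w)^{\Phi}$ directly: first exhibit $\theta$ and $\sigma$ as $\Phi$-invariants, then use the relation $M(v,w)=0$ of $\cY$ to show that they satisfy $R(\theta,\sigma)=0$, and finally close a degree count to conclude that $\theta,\sigma$ generate all of $\K(\cZ)$.

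To begin with, $\theta$ and $\sigma$ lie in $\K(\cZ)$. Since $\gamma^{q}=\gamma$ for every $\gamma\in\F$, we get $\phi_\gamma(\theta)=(v+\gamma)-(v+\gamma)^{q}=v-v^{q}=\theta$, and $\phi_\gamma(\sigma)=\phi_\gamma(w)=w=\sigma$. Hence $\K(\theta,\sigma)\subseteq\K(v,w)^{\Phi}=\K(\cZ)$.

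Next I would rewrite the equation $M(v,w)=0$ of $\cY$ in terms of $\theta$ and $\sigma$. The part of $M$ depending on $v$ is $\dfrac{v-v^{q^{2}}}{v-v^{q}}=\dfrac{v^{q^{2}}-v}{v^{q}-v}$; writing $u=v^{q}-v$ and using $u^{q}=v^{q^{2}}-v^{q}$ one gets $v^{q^{2}}-v=u^{q}+u$, so that $\dfrac{v^{q^{2}}-v}{v^{q}-v}=u^{q-1}+1$. As $u=v^{q}-v=-\theta$ and $(-1)^{q-1}=1$ in characteristic $p$, this equals $\theta^{q-1}+1$. Since the remaining term $Y^{q^{2}-q}$ of $M$ contributes $\sigma^{q(q-1)}$, the relation $M(v,w)=0$ is precisely $1+\theta^{q-1}+\sigma^{q(q-1)}=0$, that is $R(\theta,\sigma)=0$.

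Finally, the degree count. The extension $\K(v,w)\,|\,\K(\cZ)$ is Galois of degree $|\Phi|=q$, while $\K(\theta,\sigma)=\K(\theta,w)$ and $v$ is a zero of $T^{q}-T+\theta\in\K(\theta,w)[T]$, so $[\K(v,w):\K(\theta,\sigma)]\le q$. Together with the inclusions $\K(\theta,\sigma)\subseteq\K(\cZ)\subseteq\K(v,w)$ this forces $[\K(\cZ):\K(\theta,\sigma)]=1$, i.e. $\K(\cZ)=\K(\theta,\sigma)$. To read off the plane equation it then remains to note that $R(X,Y)=1+X^{q-1}+Y^{q(q-1)}$ is absolutely irreducible: regarded in $Y$ over $\K(X)$ it is $Y^{q(q-1)}+(X^{q-1}+1)$, and since $p\nmid q-1$ the polynomial $X^{q-1}+1$ is squarefree, so $-(X^{q-1}+1)$ is not an $r$-th power in $\K(X)$ for any prime $r$ (and, for odd $p$, does not lie in $-4\,\K(X)^{4}$), whence the standard irreducibility criterion for binomials applies; alternatively, $\deg_X R=q-1=[\K(v,w):\K(w)]/[\K(v,w):\K(\theta,w)]=[\K(\cZ):\K(\sigma)]$, so $R(X,\sigma)$ is the minimal polynomial of $\theta$ over $\K(\sigma)$. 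Nothing here is genuinely hard; the only points needing some care are the telescoping identity $\tfrac{v^{q^{2}}-v}{v^{q}-v}=(v^{q}-v)^{q-1}+1$, the sign in $v^{q}-v=-\theta$, and the short absolute-irreducibility check for $R$.
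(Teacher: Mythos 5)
Your argument is correct and follows essentially the same route as the paper's proof: $\Phi$-invariance of $\theta$ and $\sigma$, a degree count giving $[\K(\cY):\K(\theta,\sigma)]=q$ and hence $\K(\theta,\sigma)=\K(\cZ)$, and substitution of $\theta,\sigma$ into $M(v,w)=0$. Two small bonuses on your side: your telescoping identity $\frac{v^{q^2}-v}{v^{q}-v}=(v^{q}-v)^{q-1}+1$ is the correct expanded form of $M$ (the paper's written-out expansion $1+X^{q-1}-X^{q(q-1)}+Y^{q(q-1)}$ is accurate only for $q=2$), and you make explicit both the bound $[\K(v,w):\K(\theta,w)]\le q$ via $T^{q}-T+\theta$ and the absolute irreducibility of $R$, which the paper leaves implicit.
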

\begin{proof}
As in the proof of Proposition \ref{prop8Feb}, $\phi_\gamma(\theta)=\theta$ and $\phi_\gamma(\sigma)=\sigma$ shows that $\K(\theta,\sigma)$ is a subfield of $\K(\cZ)$. On the other hand $[\K(\cY):\K(\cZ)]=q.$ Since $[\K(\cY):\K(\theta,w)]=q$ and $\K(\theta,w)=\K(\theta,\sigma)$, then $[\K(\cY):\K(\theta,\sigma)]=q$, whence $\K(\theta,\sigma)=\K(\mathcal{Z})$ follows. On the other hand,$$R(\theta,\sigma)= 1+\theta^{q-1}+\sigma^{q(q-1)}=1+v^{q-1}-v^{q(q-1)}+w^{q(q-1)}=M(v,w)=0.$$
\end{proof}
\begin{prop}
\label{prop8feb} The curve $\mathcal{Z}$ is isomorphic to the Fermat curve of degree $q-1$.
\end{prop}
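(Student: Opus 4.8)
The plan is to argue entirely at the level of function fields and to exploit that in characteristic $p$ the defining equation of $\cZ$ is, up to a $q$-th power, a Fermat equation: by the previous proposition $\K(\cZ)=\K(\theta,\sigma)$ with $1+\theta^{q-1}+\sigma^{q(q-1)}=0$, and the exponent $q(q-1)$ of $\sigma$ may be rewritten as $(\sigma^q)^{q-1}$, so that the subfield $\K(\theta,\sigma^q)$ already "sees" a degree $q-1$ Fermat relation. The crux will be to check that $\K(\theta,\sigma^q)$ is in fact all of $\K(\cZ)$ up to the $q$-power Frobenius, and then to note that over the perfect field $\K=\overline{\mathbb{F}}_q$ the $q$-power map is a field isomorphism onto its image.

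Concretely I would proceed in three steps. \emph{Step 1 (the collapse).} From $1+\theta^{q-1}+\sigma^{q(q-1)}=0$ we get $\theta^{q-1}=-1-(\sigma^q)^{q-1}\in\K(\sigma^q)$; since also $\theta^q\in\K(\theta^q,\sigma^q)$ and $\theta\neq 0$, it follows that $\theta=\theta^q\,(\theta^{q-1})^{-1}\in\K(\theta^q,\sigma^q)$, hence $\K(\theta^q,\sigma^q)=\K(\theta,\sigma^q)$. \emph{Step 2 (identifying the Fermat field).} In $\K(\theta,\sigma^q)$ the relation $1+\theta^{q-1}+(\sigma^q)^{q-1}=0$ holds; since the Fermat polynomial $x^{q-1}+y^{q-1}+z^{q-1}$ is absolutely irreducible, the $\K$-homomorphism from $\K(\mathcal{F}_{q-1})$ sending the two affine coordinate functions to $\theta$ and $\sigma^q$ is a nonzero homomorphism from a field, hence injective, and its image is exactly $\K(\theta,\sigma^q)$; thus $\K(\theta,\sigma^q)\cong\K(\mathcal{F}_{q-1})$. (One checks $\sigma^q\notin\K(\theta)$ — immediate since $1+\theta^{q-1}$ is squarefree of degree $q-1\ge 2$ — so that $\K(\theta,\sigma^q)$ is genuinely the full Fermat function field and not a proper subfield.) \emph{Step 3 (Frobenius).} The $q$-th power map $a\mapsto a^q$ is a ring homomorphism on $\K(\cZ)$ (characteristic $p$, $q=p^h$), injective, hence a field isomorphism of $\K(\cZ)$ onto the subfield $\K(\cZ)^q$; because $\K$ is perfect, $\K(\cZ)^q=\K(\theta^q,\sigma^q)$. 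Chaining Steps 1--3 gives $\K(\cZ)\cong\K(\theta^q,\sigma^q)=\K(\theta,\sigma^q)\cong\K(\mathcal{F}_{q-1})$, and an isomorphism of function fields yields an isomorphism of the corresponding curves, i.e.\ $\cZ\cong\mathcal{F}_{q-1}$. Equivalently phrased: $\mathcal{F}_{q-1}$ is the image of $\cZ$ under the $q$-power Frobenius, and over the perfect field $\overline{\mathbb{F}}_q$ this image is isomorphic to $\cZ$ itself.

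The step that needs care — and that I expect to be the only real obstacle — is Step 1. A priori $\K(\theta^q,\sigma^q)$ has index $q$ in $\K(\cZ)=\K(\theta,\sigma)$, so without the defining relation one would only get a degree-$q$ (purely inseparable) cover $\cZ\to\mathcal{F}_{q-1}$ rather than an isomorphism; what makes it collapse is precisely that $\theta$ enters the equation only through $\theta^{q-1}$, linearly against a pure $q$-th power, so that $\theta$ itself is recovered inside $\K(\theta^q,\sigma^q)$. The other routine points to verify are the absolute irreducibility of the Fermat equation (standard) and that $\sigma^q$ is not already in $\K(\theta)$, so that no degeneration of the Fermat model occurs; both are quick.
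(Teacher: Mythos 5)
Your Steps 1 and 2 are essentially correct (Step 2 needs the standard remark that the evaluation $S\mapsto\theta$, $T\mapsto\sigma^{q}$ has kernel exactly the ideal generated by the irreducible Fermat polynomial, which your parenthetical check together with the transcendence of $\theta$ does supply), and your Step 1 collapse is precisely the mechanism the paper exploits. The genuine gap is in Step 3. The $q$-power map $a\mapsto a^{q}$ is not a $\K$-isomorphism: it acts as the Frobenius on the constant field $\K$. Hence chaining Steps 1--3 only produces a field isomorphism $\K(\cZ)\to\K(\mathcal{F}_{q-1})$ that restricts to $c\mapsto c^{q}$ on $\K$, i.e.\ it identifies $\cZ$ with a Frobenius twist of $\mathcal{F}_{q-1}$, not a priori with $\mathcal{F}_{q-1}$ as a curve over $\K$. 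The principle you invoke to close this --- that over the perfect field $\overline{\mathbb{F}}_q$ the Frobenius image is isomorphic to the curve itself --- is false in the relevant sense: an elliptic curve over $\overline{\mathbb{F}}_q$ with $j$-invariant $j\notin\mathbb{F}_q$ is not $\K$-isomorphic to its twist, whose invariant is $j^{q}$. What actually saves your argument is not perfectness but rationality: $\cZ$ and $\mathcal{F}_{q-1}$ are given by equations with coefficients in the prime field, so the twist coincides with the curve, and the semilinear isomorphism can be corrected to a $\K$-linear one (compose with the automorphism of $\K(\mathcal{F}_{q-1})$ that is $c\mapsto c^{1/q}$ on $\K$ and the identity on $s,t$, using that $\K$ and $\mathbb{F}_q(s,t)$ are linearly disjoint over $\mathbb{F}_q$). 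You must add this point; as written, the last step does not establish the proposition.

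Once this is seen, the cleaner route is the paper's, which avoids the Frobenius on $\K(\cZ)$ altogether and is the mirror image of your construction: map $\K(\cZ)$ into $\K(\mathcal{F}_{q-1})$ by $\theta\mapsto s^{q}$, $\sigma\mapsto t$, which is well defined because $1+(s^{q})^{q-1}+t^{q(q-1)}=(1+s^{q-1}+t^{q-1})^{q}=0$, and is onto because $s^{q-1}=-1-t^{q-1}\in\K(t)$ gives $s=s^{q}\cdot s^{-(q-1)}\in\K(s^{q},t)$ --- exactly your Step 1 collapse, but performed inside the Fermat function field, where it yields a $\K$-isomorphism directly and no twist ever appears.
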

\begin{proof} Let $\K(\mathcal{F}_{q-1})=\K(s,t)$ with $s^{q-1}+t^{q-1}+1=0$ be the function field of the Fermat curve $\mathcal{F}_{q-1}$ of degree $q-1$. Then the map
\begin{equation*}
\begin{matrix} \K(\mathcal{F}_{q-1})\to \K(\mathcal{Z}) \\ (s,t)\to (s^q,t) \end{matrix}
\end{equation*}
is an isomorphism from $\mathcal{F}_{q-1}$ to $\cZ$, since $R(s^q,t)=1+(s^q)^{q-1}+t^{q(q-1)}=(1+s^{q-1}+t^{q-1})^q=0.$
\end{proof}
\begin{remark}
\label{remark1Mar} {\rm{Proposition \ref{prop8feb} shows that $\mathcal{Z}$ is a line for $q=2$ while it is an irreducible conic for $q=3$.}}
\end{remark}
\begin{remark}
\label{rem27mar2018} {\em{Let $\gamma({\mathcal{F}}_{q-1})$ denote the $p$-rank of ${\mathcal{F}}_{q-1}$. Since $Q$ leaves $\ell_\infty$ invariant and $T$ is the subgroup of $Q$ fixing $\ell_\infty$
pointwise, each of the $q^2-q$ common points of $\cC$ with $\ell_\infty$ is fixed by exactly $q^2$ elements of $Q$. Furthermore, each point on a line of $PG(2,\mathbb{F}_q)$ through $P_\infty$ is fixed by exactly $q$ elements of $Q$, and hence each of the $q(q^2-q)$ common points of $\cC$ with such lines is fixed by exactly $q$ elements of $Q$. No more point of $\cC$ is fixed by some nontrivial element of $Q$. From Proposition \ref{prop8feb} and Deuring Shafarevic formula applied to $Q$,
$$\gamma(\cC)=q^3(\gamma({\mathcal{F}}_{q-1})-1)+(q^2-q)(q^2-1)+q(q^2-q)(q-1)+1.$$
This shows that $\gamma({\mathcal{F}}_{q-1})$ determines $\gamma(\cC)$ and viceversa. Unfortunately, $\gamma({\mathcal{F}}_{q-1})$ is only known for $q=p$ in which case ${\mathcal{F}}_{p-1}$ is ordinary and hence
$\gamma({\mathcal{F}}_{p-1})=\frac{1}{2}(p-2)(p-3)$; see \cite{yn}. In this case $\gamma(\cC)=\frac{1}{2}p(p-1)(p^3-2p-2)+1$ which shows that $\cC$ is ordinary, as well. For $q>p$, ${\mathcal{F}}_{q-1}$ and hence $\cC$ are not ordinary although both have positive $p$-rank; see \cite{yn}.}}
  \end{remark}
\section{The full automorphism group of the DGZ curve}
\label{secfullaut}
This section is devoted to a deeper investigation of the automorphism group of $\cC$.
By Lemma \ref{lemA30gen2018}, $\aut(\cC)$ contains a subgroup $G\cong PGL(3,\F)$. Our goal is to prove that
$\aut(\cC)=G$. This result is true for $q=2$; see Remark \ref{remA16mar2018}.
\begin{prop}
\label{prop7Mar} $Q$ is a Sylow $p$-subgroup of $\aut(\cC)$.
\end{prop}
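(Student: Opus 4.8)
The plan is to argue by contradiction: assume $Q$ is contained in a strictly larger Sylow $p$-subgroup $S_p$ of $\aut(\cC)$, and derive a contradiction from the structure of the quotient curve $\mathcal{Z}=\cC/Q$, which by Proposition~\ref{prop8feb} is the Fermat curve $\mathcal{F}_{q-1}$. First I would carry out the reduction prepared in Remark~\ref{rem15mar2018}. Since $Q\subsetneq S_p$, Result~\ref{resmachi} gives $Q\subsetneq N_{S_p}(Q)$; taking the preimage in $N_{S_p}(Q)$ of a subgroup of order $p$ of $N_{S_p}(Q)/Q$ produces $U\le S_p$ with $Q\trianglelefteq U$ and $[U:Q]=p$. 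By Remark~\ref{rem15mar2018}, $T\trianglelefteq U$, the group $U$ leaves $\cC(\FF)\cap\ell_\infty$ invariant and permutes the $Q$-orbits $\Delta_1,\dots,\Delta_{q-1}$, and the induced group $\bar U=U/Q$ of order $p$ fixes at least two of the points $\bar\Delta_1,\dots,\bar\Delta_{q-1}$ of $\mathcal{Z}$ that are the images of the $\Delta_i$. Thus it suffices to prove that $\mathcal{F}_{q-1}$ has no automorphism of order $p$ fixing two distinct points; this contradicts the existence of $\bar U$ and forces $S_p=Q$.

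For $q\ge 5$ I would invoke the structure of the automorphism group of the Fermat curve. Here $\mathcal{F}_{q-1}$ is a smooth plane curve of degree $q-1\ge 4$ and genus at least $2$, and it is not projectively equivalent to a Hermitian curve, because $q-1=p^h+1$ would force $p^k-p^h=2$, i.e.\ $q\in\{3,4\}$. Consequently every automorphism of $\mathcal{F}_{q-1}$ is induced by a projectivity fixing $X^{q-1}+Y^{q-1}+Z^{q-1}=0$, hence is monomial, so $\aut(\mathcal{F}_{q-1})\cong(\mathbb{Z}/(q-1)\mathbb{Z})^2\rtimes S_3$ has order $6(q-1)^2$. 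Since $p\nmid q-1$, this group has no element of order $p$ when $p\ge 5$; and when $p\in\{2,3\}$ every element of order $p$ is conjugate to the transposition of two coordinates (for $p=2$) or to the cyclic permutation of the coordinates (for $p=3$). A direct computation shows that on $\mathcal{F}_{q-1}$ such a map has a single fixed point: $(1:1:0)$ when $p=2$ (forced because $1+1=0$ makes the last coordinate vanish) and $(1:1:1)$ when $p=3$ (which lies on the curve because $1+1+1=0$). In all cases $\bar U$ cannot fix two points of $\mathcal{Z}$, a contradiction.

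It remains to treat $q=3$ and $q=4$. If $q=3$ then $\mathcal{Z}$ is a conic (Remark~\ref{remark1Mar}), and an automorphism of order $p=3$ of a rational curve fixes exactly one point. If $q=4$ then $\mathcal{Z}$ is the Fermat cubic, which in characteristic $2$ is a supersingular elliptic curve, hence has zero $p$-rank; by Result~\ref{lem29dic2015} the order-$2$ automorphism $\bar U$ fixes exactly one point. Either way $\bar U$ cannot fix two points, which completes the proof that $Q$ is a Sylow $p$-subgroup of $\aut(\cC)$.

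The main obstacle is the range $q\ge 5$ with $p\in\{2,3\}$: there the Fermat curve really does possess automorphisms of order $p$, so they cannot be ruled out on numerical grounds alone; one must identify them up to conjugacy via the known description of $\aut(\mathcal{F}_{q-1})$ and then observe that in characteristic $p$ they degenerate so as to have a unique fixed point. Making the input on $\aut(\mathcal{F}_{q-1})$ self-contained --- namely deducing that every automorphism of a smooth plane curve of degree $\ge 4$ is linear unless the curve is Hermitian --- is the delicate point; everything else is a routine application of Remark~\ref{rem15mar2018} and the Hurwitz genus formula.
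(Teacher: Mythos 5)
Your proposal is correct and follows essentially the same route as the paper: the reduction of Remark \ref{rem15mar2018} to a $p$-element of $\aut(\mathcal{F}_{q-1})$ fixing at least two points of the quotient $\cC/Q\cong\mathcal{F}_{q-1}$, the bound $|\aut(\mathcal{F}_{q-1})|=6(q-1)^2$ forcing $p\le 3$, and the observation that the order-$p$ (monomial) automorphisms have a unique fixed point on the Fermat curve. If anything you are more careful than the paper: you treat $q=3,4$ separately (where the $6(q-1)^2$ count does not apply, the quotient being a conic resp.\ the Hermitian cubic), and you correctly note that $(1:1:1)$ does lie on $\mathcal{F}_{q-1}$ in characteristic $3$, so the contradiction there must indeed rest, as you argue, on the ``at least two fixed points'' conclusion of the remark rather than on the point being off the curve.
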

\begin{proof} We adopt notation and hypotheses from Remark \ref{rem15mar2018}.


From Proposition \ref{prop8feb}, $\bar{U}$ can be regarded as a $p$-subgroup of $\aut(\mathcal{F}_{q-1})$. As $|\aut(\mathcal{F}_{q-1})|=6(q-1)^2$, this is only possible for $p\leq 3$.

 Let $p=3$. Then a Sylow $p$-subgroup of $\aut(\mathcal{F}_{q-1})$ has order $3$ and its nontrivial elements are $\sigma$ and $\sigma^2$ where $\sigma(x,y,z)=(y,z,x)$. Since $p=3$, $\sigma$ (and also $\sigma^2$) viewed as an automorphism of $PG(2,\K)$ has a unique fixed point, namely $(1:1:1)$ which is not a point of $\mathcal{F}_{q-1}$. Actually, this holds true for $\bar{U}$ as $\bar{U}$ is also a Sylow $p$-subgroup of $\aut(\mathcal{F}_{q-1})$.
 On the other hand, from Remark \ref{rem15mar2018}, $\bar{U}$ has a fixed point in $\mathcal{F}_{q-1}$. This contradiction rules out the case $p=3$.

Let $p=2$. This time a Sylow $p$-subgroup of $\aut(\mathcal{F}_{q-1})$ has order $2$ and its nontrivial element is
$\sigma(x,y,z)=(y,x,z)$. In particular, $\sigma$ viewed as an automorphism of $PG(2,\K)$ fixes the line $x=y$ pointwise but no further point. Just one fixed point of $\sigma$, namely $(1,1,0)$ is in $\mathcal{F}_{q-1}$. This remains true for $\bar{U}$ as $\bar{U}$ is also a Sylow $p$-subgroup $\aut(\mathcal{F}_{q-1})$. On the other hand, from Remark \ref{rem15mar2018}, $\bar{U}$ has at least two fixed points in $\mathcal{F}_{q-1}$. This contradiction rules out the case $p=2$.
\end{proof}

\begin{prop}
\label{Orbite21Feb}
$\aut(\cC)$ has exactly two short orbits $\Omega$ and $\Delta$ where $\Omega$ is non-tame and $\Delta$ is tame.
\end{prop}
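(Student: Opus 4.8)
The plan is to identify the short orbits of $\aut(\cC)$ explicitly and to rule out any others. Since $\aut(\cC)$ contains $G\cong PGL(3,\F)$ and, by Proposition \ref{prop7Mar}, $Q$ is a Sylow $p$-subgroup of $\aut(\cC)$, the wild part of the ramification is controlled by the $G$-orbit structure on $PG(2,\K)$. First I would recall from Section \ref{secDGZ} that $\cC$ contains no $\F$-rational point and that its points lying in $PG(2,\FF)$ are exactly the $(q-1)$-fold points; the set $\cC(\FF)$ (equivalently the corresponding points of $\cX$) is one orbit under $G$ since $PGL(3,\F)$ is transitive on $PG(2,\FF)\setminus PG(2,\F)$ and every such point of the plane lies on $\cC$ by Lemma \ref{lem31jan2018} / Lemma \ref{lem2Feb}. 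Call this orbit $\Delta$; it has size $q^4-q$ (this is $|\cX(\FF)|$, computed earlier), and its point-stabilizer in $G$ is a subgroup of order $|PGL(3,\F)|/(q^4-q)=q^3(q-1)^2(q+1)\cdot\mu^{-1}$-type; more to the point, one checks the stabilizer is a $p'$-group — indeed a point of $PG(2,\FF)\setminus PG(2,\F)$ together with its conjugate determines a Baer subline/point pair, and the subgroup of $PGL(3,\F)$ fixing it is (isomorphic to a subgroup of) $PGL(2,q^2)\cap PGL(3,q)$, which has order prime to $p$. Hence $\Delta$ is a tame short orbit.

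Next I would exhibit the non-tame short orbit $\Omega$. The natural candidate is the set of points of $\cC$ lying on $\ell_\infty$, i.e.\ $\cC(\FF)\cap\ell_\infty$, which has $q^2-q$ points and is the fixed locus of the translation group $T$; but $G$ is not line-preserving, so the genuine $\aut(\cC)$-orbit $\Omega$ through such a point is larger. In fact $\Omega$ should be the full $G$-orbit of a point $P\in\cX$ lying over a $(q-1)$-fold point — wait, that is $\Delta$ again; so instead $\Omega$ must consist of points of $\cX$ lying over \emph{nonsingular} points of $\cC$ that are nonetheless special. The cleanest route: $Q$ fixes exactly one point of $\cX$ (the unique point over... ) — but $Y_\infty\notin\cC$ and no element of $Q\setminus T$ fixes a point of $\cC$, so $Q$ as a whole fixes no point of $\cX$. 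Instead, by Result \ref{lem29dic2015} is unavailable since $\gamma(\cC)>0$; so I argue via Result \ref{res56.116}: since $|G|=|PGL(3,\F)|>84(\gg-1)$ (a direct size comparison using $\gg=\tfrac12 q(q-1)(q^3-2q-2)+1$), $\aut(\cC)$ has at most three short orbits, and the non-tame ones are orbits on which the point-stabilizer has order divisible by $p$, i.e.\ orbits meeting a conjugate of $Q$'s fixed structure. The subgroup $T$ has fixed points ($\cC\cap\ell_\infty$); the $G$-orbit of such a point is $\Omega$, and its stabilizer contains a conjugate of $T$, hence is non-tame. Counting: the stabilizer in $G$ of a point of $\cC\cap\ell_\infty$ contains $T$ (order $q^2$) and some prime-to-$p$ part; computing $|\Omega|$ from the orbit theorem (Result \ref{resorbit}) gives a definite number, and one checks $\Omega\ne\Delta$.

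The decisive step is ruling out a third short orbit. I would invoke Result \ref{res56.116} together with the classification of subgroups of $PGL(3,q)$ (Result \ref{mitchellclassification}, Result \ref{dicksonclassification}): a third short orbit would force $G=PGL(3,\F)$ to have a point-stabilizer that is one of the listed maximal-type subgroups of sufficiently large order, and one goes through the list (the $q^3(q-1)^2(q+1)/\mu$ family — line/point stabilizers; the $PGL(2,q)$-type subgroups; the $q^2+q+1$-type Singer-normalizer subgroups) eliminating each: case (a) of Result \ref{res56.116} would give $|G|<24\gg^2$, which fails for large $q$ by the genus formula; cases (b),(c) are excluded because $\Delta$ is a \emph{tame} orbit and is genuinely short (its stabilizer is a proper subgroup), so we are not in a "all short orbits non-tame" case and not in the "one short orbit" case; this pins us to case (d), exactly two short orbits, one tame and one non-tame, which is precisely the claim. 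The main obstacle I anticipate is the bookkeeping in the size comparison $|G|$ versus $84(\gg-1)$ and versus $24\gg^2$ — one must verify the former holds (so Result \ref{res56.116} applies) while the latter fails (so case (a) is out) uniformly for all $q\ge 3$; this is elementary but needs the explicit genus value $\gg=\tfrac12 q(q-1)(q^3-2q-2)+1$ and a careful estimate, e.g.\ $|G|\sim q^8$ while $\gg\sim\tfrac12 q^5$, so $|G|\sim 4\gg^{8/5}$ is far below $24\gg^2\sim 6q^{10}$ only for small $q$ — so in fact case (a) may need a separate small-$q$ check, or a sharper argument identifying the orbit sizes directly rather than through the inequality.
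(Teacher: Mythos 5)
There is a genuine gap, and in fact you have the two orbits backwards. The stabilizer in $PGL(3,q)$ of a point of $PG(2,\FF)\setminus PG(2,\F)$ has order $|PGL(3,q)|/(q^4-q)=q^2(q^2-1)$, which is divisible by $p$: such a point lies on a unique line defined over $\F$, and the $q^2$ elations with that axis (a conjugate of the translation group $T$, whose fixed points on $\cC$ are exactly the $q^2-q$ points of $\cC(\FF)$ on that line) all fix it. So your $\Delta=\cC(\FF)$ is the \emph{non-tame} short orbit --- it is the paper's $\Omega$ --- and your claim that its stabilizer is a $p'$-group (the ``Baer pair''/$PGL(2,q^2)$ argument) is false. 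This also makes your construction of $\Omega$ collapse: the $\aut(\cC)$-orbit of a point of $\cC\cap\ell_\infty$ is again all of $\cC(\FF)$, so your $\Omega$ and $\Delta$ are the same set and you never exhibit a second short orbit. The tame short orbit is $\cC(\mathbb{F}_{q^3})=\Lambda_3$, whose point stabilizer contains a Singer subgroup of order $q^2+q+1$; showing that this stabilizer contains no $p$-element is the substantive step in the paper (it uses Proposition \ref{prop7Mar} that $Q$ is a Sylow $p$-subgroup, that no element of $Q\setminus T$ fixes a point of $\cC$, that $T$ has exactly $q^2-q$ fixed points, and that the Singer subgroup normalizes the putative $p$-part of the stabilizer, forcing a nontrivial Singer element to have more than three fixed points --- a contradiction). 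None of this appears in your proposal.

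Second, your mechanism for excluding a third short orbit does not close, as you yourself concede at the end: case (a) of Result \ref{res56.116} cannot be ruled out by the bound $|G|<24\gg(\cC)^2$, because $|\aut(\cC)|\approx \gg^{8/5}$ comfortably satisfies that inequality, and the subgroup classification by itself does not forbid an extra tame orbit. The paper instead argues inside the proof of Stichtenoth's theorem: in the three-orbit case (a) both tame orbits must have point stabilizers of order $2$ (with $p>2$), which is incompatible with the tame orbit $\cC(\mathbb{F}_{q^3})$ whose stabilizers have order divisible by $q^2+q+1>2$. So the step you flag as ``may need a separate check'' is precisely the missing argument, and it can only be run once the tame orbit has been correctly identified together with its large prime-to-$p$ stabilizer.
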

\begin{proof}
From the proof of Proposition \ref{propB30gen}, the set of $q^4-q$ points lying in $PG(2,\mathbb{F}_{q^2})\setminus PG(2,\mathbb{F}_q)$ is an orbit under the action of $PGL(3,q)$ and the stabiliser in $PGL(3,q)$ of any such point has order $q^2(q^2-1)$ by Result \ref{resorbit}. Therefore $\cC$ has a non-tame short orbit $\Omega$ under the action of $\aut(\cC)$.

We show that $\aut(\cC)$ has a short tame orbit $\Delta$, as well. From Lemma \ref{le14mar2018}, $\cC(\mathbb{F}_{q^3})$ consists of all points in $\Lambda_3$. Take $P\in \cC(\mathbb{F}_{q^3})$ and assume by contradiction that there exists a group $D\leq \aut(\cC)_P$ such that $|D|=p^l$ for some $l\geq 1$.
Let $S_p$ a Sylow $p$-subgroup of $\aut(\cC)$ containing $D$. Then $S_p$ and $Q$ are conjugate in $\aut(\cC)$, that is, $Q=\gamma S_p \gamma^{-1}$ for some $\gamma \in \aut(\cC)$. Let $U=\gamma D \gamma^{-1}$ and $P'=\gamma(P)$. Then $U(P')=P'$. As no element in $Q\setminus T$ fixes a point in $\cC$, $U$ is a subgroup of $T$ and $P'\in \cC(\mathbb{F}_{q^2})$. Also, the set of all fixed points of $T$ in $\cC(\mathbb{F}_{q^2})$ has size $q^2-q$. Let $M=\gamma^{-1}T\gamma$. Then $M$ also has exactly $q^2-q$ fixed points, and the stabiliser of $P$ in $S_p$ coincides with $M$. Let $C_{q^2+q+1}$ be the Singer subgroup of $\aut(\cC)$ fixing $P$. Then $C_{q^2+q+1}$ fixes exactly two
other points in $PG(2,\mathbb F_{q^3})$ and no non-trivial element of $C_{q^2+q+1}$ fixes a further point.
From Result \ref{res74}, $C_{q^2+q+1}$ normalizes $M$. Therefore, the set of fixed points of $M$ is left invariant by $C_{q^2+q+1}$. Since $q^2+q+1>q^2-q>3$, this yields that some nontrivial element in $C_{q^2+q+1}$ has more than three fixed points, a contradiction.

Therefore, $\Omega$ and $\Delta$ are short orbits of $\aut(\cC)$. From Result \ref{res56.116}, either they are the only short orbits of $\aut(\cC)$, or there exists just one further tame short orbit of $\aut(\cC)$. The latter possibility may be investigated as in the proof of Result \ref{res56.116}. From (III) in that proof, this possibility may only occur when $p>2$  and $\vert \aut(\cC)_P \vert=2$ for the stabiliser of each point $P$ in the tame orbits. But in our case, if $P\in \cC(\mathbb{F}_{q^3})$, the Singer subgroup of $\aut(\cC)$ fixing $P$ has order $q^2+q+1>2$; a contradiction.
\end{proof}

\begin{teorema}
$\aut(\cC)\cong PGL(3,q)$.
\end{teorema}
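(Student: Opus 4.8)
The plan is to prove $\aut(\cC)=G$ where $G\cong PGL(3,q)$ is the Dickson subgroup, by showing that $G$ is in fact a \emph{maximal} subgroup of $\aut(\cC)$ in the sense that no larger automorphism group is possible, and then ruling out $\aut(\cC)$ being a group strictly containing a copy of $PGL(3,q)$. First I would set $A=\aut(\cC)$ and assume $G\lneq A$. By Proposition \ref{prop7Mar}, $Q$ is already a Sylow $p$-subgroup of $A$, so $p\nmid [A:G]$; moreover the orbit $\Omega$ of the $q^4-q$ singular points (points of $PG(2,\FF)\setminus PG(2,\F)$) and the tame orbit $\Delta\subseteq\cC(\mathbb{F}_{q^3})$ are, by Proposition \ref{Orbite21Feb}, the only two short orbits of $A$. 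Since $G$ is transitive on $\Omega$ and on $\cC(\mathbb{F}_{q^3})=\Lambda_3$, and since short orbits of $A$ are unions of $G$-orbits that are themselves $A$-invariant, the $A$-orbit structure forces $A$ to still act on $\Omega$ with the point-stabiliser being a group containing the $G$-stabiliser $G_P$ of order $q^2(q^2-1)$ as a subgroup of index $[A:G]$. The key idea is then to exploit the permutation representation of $A$ on $\Omega$ (size $q^4-q$): $G$ acts on $\Omega$ essentially as $PGL(3,q)$ on the exterior points, and any overgroup $A$ of $G$ inside $\mathrm{Sym}(\Omega)$ which normalises this action would, via the classification of groups between $PGL(3,q)$ and larger linear/symmetric groups (or directly via Result \ref{resmitchell} and the Mitchell--Hartley classification, Results \ref{mitchellclassification} and \ref{dicksonclassification}), have to be one of a very short list.

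The main mechanism I would use is the Hurwitz genus formula applied to the cover $\cX\to\cX/A$ together with the known data for $G$. Concretely: I would first compute $2\gg-2=|G|(2\gg_0-2)+\sum d_P$ for $G=PGL(3,q)$ using the two short orbits $\Omega$ (non-tame, stabiliser $Q\rtimes$(cyclic of order $q-1$) ... more precisely the stabiliser in $PGL(3,q)$ of an exterior point has order $q^2(q^2-1)$) and $\Delta$ (tame, stabiliser the Singer cyclic group of order $q^2+q+1$), and verify this is consistent with $\gg=\frac12 q(q-1)(q^3-2q-2)+1$; this pins down $\gg_0=\gg(\cX/G)$. Then for $A$: since $\cX/A$ is dominated by $\cX/G$, its genus $\gg_1\le \gg_0$, and the quotient $\cX/A$ has function field inside $\K(\cX)^G$. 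The divisibility $|A|=[A:G]\cdot|G|$ together with the Hurwitz formula for $A$ — using that the contributions to the different come only from points in $\Omega\cup\Delta$ (all other orbits of $A$ are long, by Proposition \ref{Orbite21Feb}) — yields a Diophantine constraint on $[A:G]$. The stabiliser $A_P$ for $P\in\Omega$ has a normal $p$-subgroup $Q$ (its Sylow-$p$ is $Q$) with tame cyclic complement, and $A_P/Q$ is a $p'$-group of order $(q-1)\cdot[A:G]/(\text{something})$ that must embed as an automorphism group of the quotient $\cX/Q\cong$ (a curve related to) $\mathcal{F}_{q-1}$, forcing $[A:G]$ to divide a small multiple of $(q-1)^2$; similarly $A_P$ for $P\in\Delta$ contains the Singer group of order $q^2+q+1$, so $[A:G]$ is constrained by $|\aut(\cC)_P|$ which acts on $PG(2,\mathbb{F}_{q^3})$ fixing $P$ and at most finitely many points.

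After these numerical constraints cut $[A:G]$ down to a handful of possibilities, I would finish by a direct structural argument: $G\cong PGL(3,q)$ contains $PSL(3,q)$, which by Result \ref{resmitchell} needs at least $q^2+q+1$ points to act on nontrivially, and by the Mitchell--Hartley list (Result \ref{mitchellclassification}) the only groups properly containing $PSL(3,q)$ in the relevant range are $PSL(3,q^{p'})$-type overgroups, $PGL$/index-$3$ extensions, or the exceptional small groups — none of which can arise here because $q$ ranges over all $p^h\ge3$ while the exceptional groups exist only for special small $q$, and a genuinely larger field version $PSL(3,q')$ with $q'>q$ would force (via Result \ref{resmitchell} again) an $A$-orbit smaller than $q^4-q$, contradicting that $\Omega,\Delta$ are the only short orbits and $|\Omega|=q^4-q$, $|\Delta|\ge q^6-q^5-q^4+q^3$ wait — actually the cleaner contradiction is on genus: $PGL(3,q')$ as an automorphism group would force $\gg$ to match the DGZ-genus for parameter $q'$, impossible. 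Thus $A$ cannot strictly contain $G$, so $\aut(\cC)=G\cong PGL(3,q)$. I expect the \textbf{main obstacle} to be the bookkeeping in the Hurwitz computation for $A$: one must rule out the possibility that the extra factor $[A:G]$ hides inside a \emph{tame} enlargement of the point stabilisers on $\Omega$ or $\Delta$ without creating any new short orbit and without changing $\gg$ — i.e. showing the genus equation has no ``parasitic'' solution — and this requires carefully tracking the ramification filtration $A_P^{(i)}$ at $P\in\Omega$ (which by Results \ref{res74}, \ref{resnaka}, and \ref{resnaka} wait, Result \ref{resnaka} the $e\mid d-1$ condition) to show it coincides with that of $G_P$, forcing $A_P=G_P$ and hence $[A:G]=1$.
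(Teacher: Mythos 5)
Your core mechanism --- applying the Hurwitz genus formula to $A=\aut(\cC)$ with the two short orbits of Proposition \ref{Orbite21Feb}, controlling the ramification filtration at the wild orbit via Results \ref{res74} and \ref{resnaka}, and using the Singer subgroup inside the stabiliser of a point of the tame orbit --- is exactly the paper's route. But you leave the decisive computation undone, and the scaffolding around it contains genuine errors. First, the $p$-part of $A_P$ for $P\in\Omega$ is not $Q$: since no element of $Q\setminus T$ fixes a point of $\cC$, the Sylow $p$-subgroup of $A_P$ is a conjugate of $T$, of order $q^2$; this is precisely what makes the wild contribution computable, because $|A_P^{(1)}|=q^2$ together with Result \ref{resnaka} (here $e=|A_P/A_P^{(1)}|\ge q^2-1$, $d=|A_P^{(1)}/A_P^{(2)}|\le q^2$ and $e\mid d-1$) forces $|A_P|=q^2(q^2-1)$ and $A_P^{(2)}=1$. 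Second, your finishing arguments do not work: Result \ref{mitchellclassification} classifies \emph{subgroups} of $PSL(3,q)$, not overgroups; there is no a priori reason that $A$ or $A_P$ acts by projectivities on $PG(2,\mathbb{F}_{q^3})$ (that the automorphisms are linear is in effect what is being proven); and an abstract copy of $PGL(3,q')$ inside $A$ does not force $\gg(\cC)$ to equal the DGZ genus for parameter $q'$. Third, even granting $A_P=G_P$, your conclusion $[A:G]=1$ does not follow: the $A$-orbit $\Omega$ is only known to \emph{contain} the $q^4-q$ points of $PG(2,\FF)\setminus PG(2,\F)$ and could a priori be larger, so the orbit theorem alone gives no upper bound on $|A|$.

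What actually closes the argument in the paper is purely numerical and avoids all of this: $|A|>84(\gg(\cC)-1)$ gives $\gg(\cX/A)=0$ by Result \ref{res56.116}; one writes the Hurwitz formula with $|\Omega|=|A|/|A_P|$, $|\Delta|=|A|/|A_R|$, $d_P=\bigl(q^2(q^2-1)-1\bigr)+(q^2-1)$ and $d_R=|A_R|-1$; since $A_R$ contains a Singer subgroup, $|A_R|=\lambda(q^2+q+1)$, and the lower bound $|A|\ge q^3(q^3-1)(q^2-1)$ rules out $\lambda>1$; with $\lambda=1$ the genus equation then forces $|A|=q^3(q^3-1)(q^2-1)=|PGL(3,q)|$, hence $A=G$. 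These are exactly the ``parasitic solution'' and bookkeeping issues you flag as the main obstacle, so your proposal stops short of a proof precisely where the real work lies.
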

\begin{proof}
Take a point $P\in\Omega$ and a point $R\in \Delta$.
Proposition \ref{Orbite21Feb} together with the Hurwitz genus formula applied to $\Gamma=\aut(\cC)$ give
\begin{equation}
\label{eq8mar2018}
2\gg(\cC)-2=\vert\Gamma \vert (2\bar{\gg}-2)+\vert \Omega \vert d_P + \vert \Delta \vert d_R
\end{equation}
 where $\bar{\gg}=\gg(\mathcal{X}/\Gamma)$.
Since $\vert \Gamma\vert >84(\gg(\cC)-1)$ for $q>2$, Result \ref{res56.116} yields $\bar{\gg}=0$. Furthermore, $\vert \Gamma_P\vert\geq q^2(q^2-1)$ and $\vert \Gamma_P^{(1)}\vert=q^2$. Actually, $\vert \Gamma_P\vert= q^2(q^2-1)$ and $\Gamma_P^{(2)}$ is trivial by Result \ref{resnaka}. Then
(\ref{eq8mar2018}) reads
\begin{equation} \label{eq18Feb}
 2\gg(\cC)-2=-2\vert\Gamma \vert+\frac{\vert \Gamma \vert}{\vert \Gamma_P \vert} (\vert \Gamma_P \vert-1+\vert \Gamma_P^{(1)}\vert-1) + \frac{\vert \Gamma \vert}{\vert \Gamma_R\vert} d_R
\end{equation}

As $\Gamma_R$ does not contain $p$-elements, $d_R=(\vert \Gamma_R\vert-1)$ and (\ref{eq18Feb}) reads
\begin{equation*}
2\gg(\cC)-2=\frac{\vert \Gamma \vert}{\vert \Gamma_P\vert}\left( q^2\left(1-\frac{q^2-1}{\vert \Gamma_R \vert}\right)-2\right).
\end{equation*}
Furthermore, $\vert \Gamma_R \vert = \lambda(q^2+q+1)$ with $\lambda \geq 1$ as $R$ is a fixed point of a Singer subgroup of $PGL(3,q)$. Therefore,
\begin{equation*}
1-\frac{q^2-1}{\vert \Gamma_R \vert}=\frac{(\lambda-1)q^2+\lambda q+\lambda+1}{\lambda(q^2+q+1)}>\frac{\lambda-1}{\lambda}.
\end{equation*}
Suppose $\lambda>1$. From (\ref{eq18Feb}),
\begin{equation*}
2\gg(\cC)-2=\frac{\vert \Gamma \vert}{\vert \Gamma_P\vert}\left( q^2\left(1-\frac{q^2-1}{\vert \Gamma_R \vert}\right)-2\right)>\frac{\vert \Gamma \vert}{\vert \Gamma_P\vert}\left(\frac{q^2}{2}-2\right),
\end{equation*}
that is impossible since $\vert \Gamma \vert \geq q^3(q^3-1)(q^2-1)$. Hence $\lambda=1$ and $\vert \Gamma_R\vert=q^2+q+1$.
Finally, (\ref{eq18Feb}) yields $$\vert \Gamma \vert=q^3(q^3-1)(q^2-1)=\vert PGL(3,q)\vert.$$

\end{proof}

\section{The Geometry of the DGZ-curve}
\label{sectionnc}
We show that $\cC$ has exceptional geometric properties, as well.
\begin{prop} The DGZ curve is non-classical and $\mathbb{F}_q$-Frobenius non-classical.
\label{propA19mar2018}
\end{prop}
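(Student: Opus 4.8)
The plan is to exhibit explicitly the polynomials $G_0,G_1,G_2,H$ in the identity (\ref{eq19mar2018}) and then the polynomial $T$ in (\ref{eqA19mar2018}), using the determinantal shape of $D_1(x,y,z)$. The key observation is that $D_1(x,y,z)$, being a $3\times 3$ determinant whose columns are the Frobenius twists $(x,y,z)$, $(x^q,y^q,z^q)$, $(x^{q^3},y^{q^3},z^{q^3})$, can be expanded along the first column as $D_1 = x\,A_x + y\,A_y + z\,A_z$ where $A_x,A_y,A_z$ are the corresponding $2\times 2$ cofactors. Each cofactor is a $2\times 2$ determinant built from entries that are $q$-th and $q^3$-th powers, hence each cofactor is itself a $q$-th power: $A_x = (\Delta_x)^q$ for an explicit polynomial $\Delta_x$ (a $2\times 2$ determinant with rows $(y,z),(y^{q^2},z^{q^2})$), and similarly for $A_y,A_z$. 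Therefore
\begin{equation}
\label{eqP1}
D_1(x,y,z) = x\,(\Delta_x)^q + y\,(\Delta_y)^q + z\,(\Delta_z)^q,
\end{equation}
which is exactly (\ref{eq19mar2018}) with $p^m=q$, $H=D_2$, $G_1=\Delta_x$, $G_2=\Delta_y$, $G_0=\Delta_z$, since $HF = D_2 F = D_1$. This proves $\cC$ is non-classical with order sequence $(0,1,q)$.

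For $\mathbb{F}_q$-Frobenius non-classicality, since $p^m=q$ here, condition (\ref{eqA19mar2018}) with $q/p^m=1$ reads $TF = G_1 x + G_2 y + G_0 z$; but that is literally the content of (\ref{eqP1}) again with $T=D_2$. Concretely, $D_2 F = D_1 = x\Delta_x + y\Delta_y + z\Delta_z$ would be what is needed, and indeed one has the parallel expansion: expanding $D_1$ along the first column gives $D_1 = x B_x + y B_y + z B_z$ where $B_x,B_y,B_z$ are the literal $2\times 2$ cofactors (not raised to a power), so $TF = D_1$ with $T = D_2$ works directly. Thus $\cC$ is $\mathbb{F}_q$-Frobenius non-classical. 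The same computation with $q$ replaced by $q^3$ throughout (noting $D_1$ is also a determinant whose third column is the $q^3$-twist, and $x^{q^3}$ appears, so the $q^3$-power structure is available) gives $\mathbb{F}_{q^3}$-Frobenius non-classicality as well, which is the remark promised in the introduction.

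The only genuinely non-routine point is checking that the hypotheses attached to the criteria (\ref{eq19mar2018}) and (\ref{eqA19mar2018}) are met: namely that $1 < p^m \le q$ (here $p^m = q$ and $q \ge 3$, so fine) and, more importantly, that the $G_i$ in the Frobenius condition are the \emph{same} $G_i$ as in the non-classical condition — this forces a compatibility that I would verify by noting both identities come from a single expansion of $D_1$ along its first column, once read with the $q$-power absorbed into the cofactor and once without. I would also double-check that $\cC$ being singular does not obstruct applying \cite[Theorems 8.72]{HKT} and \cite[Section 7.8]{HKT}: these are stated for irreducible plane curves via their nonsingular model, and by Corollary \ref{cor31gen2018} each point of $\cC$ carries a unique branch, so the passage to $\cX$ is harmless. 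The main obstacle, such as it is, is purely bookkeeping: getting the indices and the $x\leftrightarrow z$ labelling in $G_0,G_1,G_2$ to match the convention of (\ref{eq19mar2018}).
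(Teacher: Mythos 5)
Your first half is fine and is exactly the paper's argument: expanding $D_1$ along its first column, the cofactors are $q$-th powers, namely $G_1^q,G_2^q,G_0^q$ with $G_1=yz^{q^2}-y^{q^2}z$, $G_2=x^{q^2}z-xz^{q^2}$, $G_0=xy^{q^2}-x^{q^2}y$, so $D_2F=D_1=G_1^qx+G_2^qy+G_0^qz$, which is (\ref{eq19mar2018}) with $H=D_2$ and $p^m=q$; hence $\cC$ is non-classical with orders $(0,1,q)$.

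The Frobenius half, however, has a genuine gap, and it sits precisely at the ``compatibility'' point you flagged. With $p^m=q$, criterion (\ref{eqA19mar2018}) demands $TF=G_1x+G_2y+G_0z$ with the \emph{same} $G_i$ as above, i.e.\ the $q$-th roots of the cofactors. Your claim that one can re-read the expansion of $D_1$ ``without absorbing the $q$-th power'', so that $D_2F=D_1=x\Delta_x+y\Delta_y+z\Delta_z$, is false: the literal cofactors \emph{are} $\Delta_x^q,\Delta_y^q,\Delta_z^q$, so that reading merely reproduces the non-classicality identity, and the asserted identity fails already on degree grounds ($\deg D_1=q^3+q+1$ while $x\Delta_x+y\Delta_y+z\Delta_z$ has degree $q^2+2$). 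So taking $T=D_2$ does not verify (\ref{eqA19mar2018}). What actually closes the argument — and is the paper's one-line observation — is that $G_1x+G_2y+G_0z$ is identically the zero polynomial (it is the Laplace expansion of a $3\times 3$ determinant whose first and second columns both equal $(x,y,z)^t$), so (\ref{eqA19mar2018}) holds trivially and $\cC$ is $\mathbb{F}_q$-Frobenius non-classical. For the same reason, your closing remark that $\mathbb{F}_{q^3}$-Frobenius non-classicality follows by ``replacing $q$ by $q^3$ throughout'' is not the right mechanism (that would change the $G_i$ and the order sequence); the paper instead checks, with the same $G_i$, that $G_1x^{q^2}+G_2y^{q^2}+G_0z^{q^2}$ vanishes identically.
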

\begin{proof} Let $$G_1(x,y,z)=yz^{q^2}-y^{q^2}z,\, G_2(x,y,z)=x^{q^2}z-xz^{q^2},\, G_0(x,y,z)=xy^{q^2}-x^{q^2}y.$$
A straightforward computation shows that
Equation (\ref{eqA30gen2018}) can also be written as $D_2F=G_1^qx+G_2^qy+G_0^qz.$
By (\ref{eq19mar2018}) with $H=D_2$ and $q=p^m$, $\cC$ is non-classical. Furthermore, $G_1x+G_2y+G_0z$ is the zero polynomial. This shows that $\cC$ is $q$-Frobenius non-classical.
\end{proof}
Obviously, $\cC$ may be regarded as a curve defined over $\mathbb{F}_{q^i}$ with $i\geq 1$. For $q=2$, $\cC$ has equation (\ref{eqC19mar2018}). Then \cite[Section 3]{jt} yields that $\cC$ is $\mathbb{F}_8$-Frobenius non-classical. We prove that Top's result holds for any $q$.
\begin{prop} The DGZ curve is $\mathbb{F}_{q^3}$-Frobenius non-classical.
\label{propB19mar2018}
\end{prop}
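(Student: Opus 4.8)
The plan is to exhibit the curve as $\mathbb{F}_{q^3}$-Frobenius non-classical by producing the polynomial identity required by \cite[Theorem 8.72]{HKT}, i.e.\ a relation of the form (\ref{eqA19mar2018}) with $q$ replaced by $q^3$. Recall from the proof of Proposition \ref{propA19mar2018} that with
$$G_1=yz^{q^2}-y^{q^2}z,\quad G_2=x^{q^2}z-xz^{q^2},\quad G_0=xy^{q^2}-x^{q^2}y$$
we have $D_2F=G_1^qx+G_2^qy+G_0^qz$, so the $\mathcal{L}$-order sequence is $(0,1,p^m)$ with $p^m=q$, and these $G_i$ are exactly the homogeneous polynomials occurring in (\ref{eq19mar2018}). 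For the $\mathbb{F}_{q^3}$-Frobenius non-classicality with this same $p^m=q$, the condition (\ref{eqA19mar2018}) becomes the existence of a homogeneous $T$ with
$$TF=G_1 x^{q^3/q}+G_2 y^{q^3/q}+G_0 z^{q^3/q}=G_1 x^{q^2}+G_2 y^{q^2}+G_0 z^{q^2}.$$
So the heart of the matter is to verify the single polynomial identity $G_1 x^{q^2}+G_2 y^{q^2}+G_0 z^{q^2}=TF$ for a suitable $T$, and the natural candidate is again $T=D_2$ (or $T$ a scalar multiple of $D_2$), since $F=D_1/D_2$.

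First I would expand $G_1 x^{q^2}+G_2 y^{q^2}+G_0 z^{q^2}$ directly: substituting the expressions for $G_0,G_1,G_2$, this sum equals
$$x^{q^2}(yz^{q^2}-y^{q^2}z)+y^{q^2}(x^{q^2}z-xz^{q^2})+z^{q^2}(xy^{q^2}-x^{q^2}y),$$
which I expect collapses (after cancellation of the $x^{q^2}y^{q^2}$-type terms) to a $3\times 3$ determinant in the variables $x,y,z$ and their $q^2$-th powers. Concretely, one should recognise
$$G_1 x^{q^2}+G_2 y^{q^2}+G_0 z^{q^2}=\begin{vmatrix} x & x^{q^2} & x^{q^2}\\ y & y^{q^2} & y^{q^2}\\ z & z^{q^2} & z^{q^2}\end{vmatrix}+\cdots$$
— more precisely it should reproduce, up to sign, a determinant of the shape $\det\!\big(v\ \ v^{(q^2)}\ \ w\big)$; the cleanest target is $D_1(x,y,z)$ itself, because the cofactor expansion of $D_1$ along a suitable row, combined with the relations already used in the proof of Lemma \ref{le14mar2018} (where $\gamma^{q^3}=\gamma$ on $\mathbb{F}_{q^3}$-points makes the third column collapse), strongly suggests $G_1 x^{q^2}+G_2 y^{q^2}+G_0 z^{q^2}=D_1(x,y,z)$ identically as polynomials. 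Granting that identity, we get $G_1 x^{q^2}+G_2 y^{q^2}+G_0 z^{q^2}=D_1=D_2F$, so (\ref{eqA19mar2018}) holds with $T=D_2$ and $q$ there replaced by $q^3$, and \cite[Theorem 8.72]{HKT} yields that $\cC$ is $\mathbb{F}_{q^3}$-Frobenius non-classical. One should also note $1<p^m=q\le q^3$, so the hypothesis $1<p^m\le q^3$ of that theorem is satisfied.

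The main obstacle, and the only genuinely computational step, is confirming the determinant/polynomial identity $G_1 x^{q^2}+G_2 y^{q^2}+G_0 z^{q^2}=D_1(x,y,z)$; this is a finite symbolic check expanding both sides and matching the six monomials $x^{q^2}y z,\ x y^{q^2} z,\ x y z^{q^2}$ and their conjugates, but one has to be careful about signs in the cofactor expansion of $D_1$ and about the fact that the relevant determinant has two equal-looking columns only on $\mathbb{F}_{q^3}$-rational points whereas as polynomials the $q^2$-Frobenius substitution is what makes it work. If, contrary to expectation, the naive identity fails to be exactly $D_1$ but only a nonzero $\mathbb{F}_q$-multiple of it, the argument is unaffected: $T$ can absorb the constant. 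A minor secondary point to record is that the order sequence $(0,1,q)$ established in Proposition \ref{propA19mar2018} is the same sequence relevant here, so no re-derivation of the $\mathcal{L}$-order sequence or of the polynomials $G_i$ is needed; we are simply feeding the already-known $G_i$ into the $\mathbb{F}_{q^3}$-version of the criterion.
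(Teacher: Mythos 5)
Your overall strategy is the paper's: feed the polynomials $G_0,G_1,G_2$ from Proposition \ref{propA19mar2018} into criterion (\ref{eqA19mar2018}) with $q$ replaced by $q^3$ and $p^m=q$, so that the relevant exponent is $q^2$. However, the identity you single out as the crux, $G_1x^{q^2}+G_2y^{q^2}+G_0z^{q^2}=D_1(x,y,z)$ (or a nonzero scalar multiple, with $T=D_2$), is false, so the ``finite symbolic check'' you plan would not confirm it. A degree count already rules it out: $\deg G_i=q^2+1$, so the left-hand side has degree $2q^2+1$, whereas $\deg D_1=\deg(D_2F)=q^3+q+1$, and these differ for every $q\ge 2$. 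The slip comes from conflating the two different roles of the $G_i$: in the non-classicality relation (\ref{eq19mar2018}) they occur raised to the $q$-th power, and indeed $G_1^qx+G_2^qy+G_0^qz=D_1$ by cofactor expansion; in the Frobenius condition (\ref{eqA19mar2018}) they occur to the first power, and then
$$G_1x^{q^2}+G_2y^{q^2}+G_0z^{q^2}=\begin{vmatrix} x^{q^2} & x & x^{q^2}\\ y^{q^2} & y & y^{q^2}\\ z^{q^2} & z & z^{q^2}\end{vmatrix}=0$$
identically, since two columns coincide.

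The conclusion itself survives, and in fact more simply than you anticipate: the right-hand side of (\ref{eqA19mar2018}) being the zero polynomial means the condition holds trivially (equivalently, $F$ divides the sum), which is precisely the paper's one-line proof and mirrors the $\mathbb{F}_q$-case in Proposition \ref{propA19mar2018}, where $G_1x+G_2y+G_0z=0$ was the identity used. But your hedge only covers the possibility that the identity holds ``up to a nonzero constant'', so as written the proposal hinges on an identity that cannot hold, and the zero outcome --- the actual mechanism --- is not contemplated; that is a genuine gap, albeit one with an immediate fix.
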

\begin{proof} From Proposition \ref{propA19mar2018}, $\cC$ is non-classical. A straightforward computation shows that $G_1x^{q^2}+G_2y^{q^2}+G_0z^{q^2}$ is the zero polynomial so the assertion follows from equation (\ref{eqA19mar2018}).
\end{proof}

Let $\ell$ be a line of $PG(2,\mathbb{F}_{q})$. From Lemma \ref{lem31jan2018}, the intersection divisor of $\cC$ cut out by $\ell$ is $$V= \sum_{i=1}^{q^2-q} qP_i$$
where $P_1,\ldots,P_{q^2-q}$ are the common points of $\cC$ and $\ell$.
Therefore, the ramification divisor of $\mathcal{L}$ is
$$R={\rm{div}}(W_R)+(q+1){\rm{div}}(dx)+3V,$$
and $\deg(R)=(q+1)(2\gg(\cC)-2)+3(q^3-q^2)=q(q-1)(q^4+q^3-2q^2-q-2)$. Furthermore, Proposition \ref{propA19mar2018} yields $(\varepsilon_0,\varepsilon_1,\varepsilon_2)=(0,1,q)$.

In terms of $(\mathcal{L},P)$-orders, Lemma \ref{lem31jan2018}
is stated in the following lemma.
\begin{lem}
\label{lemA20mar2018}
For $P\in \cC(\mathbb{F}_{q^2})$, the $({\mathcal{L}},P)$-order sequence is $(0,q-1,q)$, and $v_P(R)=q-2$.
\end{lem}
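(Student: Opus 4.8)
The statement to prove is Lemma \ref{lemA20mar2018}: for $P \in \cC(\mathbb{F}_{q^2})$ (equivalently, for $P$ one of the $(q-1)$-fold points of $\cC$ lying in $PG(2,\mathbb{F}_{q^2})\setminus PG(2,\F)$), the $(\mathcal{L},P)$-order sequence is $(0,q-1,q)$ and $v_P(R)=q-2$. My plan is to read off the $(\mathcal{L},P)$-orders directly from the local branch description in Lemma \ref{lem31jan2018}, and then compute $v_P(R)$ from the formula $R = \mathrm{div}(W_R) + (q+1)\mathrm{div}(dx) + 3V$ recalled just before the lemma.

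\textbf{Step 1: the $(\mathcal{L},P)$-order sequence.} By Lemma \ref{lem31jan2018} and Corollary \ref{cor31gen2018}, $P$ is the centre of a unique branch $\gamma$ of $\cC$, with primitive representation $x(t) = \alpha + c_1 t^{q-1} + \cdots$, $y(t) = d_1 t^q + \cdots$ (after moving $P$ to the origin via a suitable $PGL(3,\F)$-element, as in the proof of Proposition \ref{propB30gen}); the branch has order $q-1$ and intersection number $q$ with its tangent line. The $(\mathcal{L},P)$-orders $(j_0,j_1,j_2)$ are the distinct intersection multiplicities $I(P,\cC\cap r)$ as $r$ ranges over lines through $P$. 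Here $j_0 = 0$ trivially (take a line not through $P$ — wait, rather, for $\mathcal{L}$ the order $j_0$ is always $0$). A generic line through $P$ meets $\cC$ at $P$ with multiplicity equal to the branch order, namely $q-1$, so $j_1 = q-1$. The tangent line meets the branch with multiplicity $q$ by Lemma \ref{lem31jan2018}, so $j_2 = q$. Hence $(j_0,j_1,j_2) = (0, q-1, q)$. I should double-check that no line through $P$ gives an intermediate value: since $\cC$ has a single branch at $P$ of order $q-1$, every line through $P$ other than the tangent has intersection multiplicity exactly $q-1$ with that branch, and $P$ contributes nothing else; so indeed only the two values $q-1$ and $q$ occur.

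\textbf{Step 2: computing $v_P(R)$.} Using $R = \mathrm{div}(W_R) + (q+1)\mathrm{div}(dx) + 3V$, I need the local contributions at $P$ of each term. Since $P \notin PG(2,\F)$, $P$ is not one of the points $P_i$ appearing in $V = \sum q P_i$ (those lie on a line of $PG(2,\F)$ and are in $\cC(\mathbb{F}_{q^2})$, but $V$ is cut by an $\mathbb{F}_q$-rational line and its support is the $\F$-line's intersection with $\cC$ — I should check $P$ can be chosen off it, or rather: $v_P(V)$ is nonzero only for the finitely many $P$ on the chosen line; since $R$ as a divisor does not depend on the choice of line one must be slightly careful, but the standard convention is that $v_P(R)$ is computed generically, i.e. $v_P(V)=0$ for the relevant $P$). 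So the $3V$ term contributes $0$ at $P$. For $\mathrm{div}(dx)$: from Case (a) in the genus computation, $\mathrm{ord}_\gamma(dx) = q-2$ at such a point. For $W_R = D^{(q)}(y)$ (the Wronskian, since the order sequence is $(0,1,q)$): here I need $v_P(D^{(q)}(y))$. The general theory of the ramification divisor gives $v_P(R) = \sum_{i=0}^{2}(j_i - \varepsilon_i)$ plus a correction only when $P$ is a point where $dx$ vanishes or the line divisor is supported — more precisely, the clean statement is $v_P(R) \geq \sum_i (j_i - \varepsilon_i) = (0-0)+((q-1)-1)+(q-q) = q-2$, with equality generically. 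So I expect $v_P(R) = q-2$, and the cleanest route is to invoke $v_P(R) = \sum_{i}(j_i(P) - \varepsilon_i)$, valid because $P$ lies off the chosen line $\ell$ and off $\mathrm{div}(dx)$'s "extra" support. Alternatively, one computes directly: from $x(t) = \alpha + c_1 t^{q-1}+\cdots$, $y(t) = d_1 t^q + \cdots$, one finds $dx = -c_1 t^{q-2}(\text{unit})\,dt$ so $\mathrm{ord}_\gamma(dx) = q-2$, and a Hasse-derivative computation gives $\mathrm{ord}_\gamma(W_R) + (q+1)(q-2) = $ the right value; but this is the routine calculation I would rather avoid and replace by the order-sequence formula.

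\textbf{Main obstacle.} The only genuinely delicate point is the bookkeeping in Step 2: making sure the contributions of $3V$ (zero at $P$, by choosing $\ell$ off $P$, legitimate since $v_P(R)$ as defined in the excerpt is $\mathrm{ord}_P(W)$, a well-defined local quantity) and of the relation between $v_P(W_R)$, $\mathrm{ord}_\gamma(dx)$, and the $(\mathcal{L},P)$-orders are correctly combined. The cleanest and safest formulation is to cite the standard fact (from \cite[Section 7.6]{HKT} or the Stöhr--Voloch framework already set up in the excerpt) that for a point $P$ off the support of $V$, $v_P(R) = \sum_{i=0}^{2} \bigl(j_i(P) - \varepsilon_i\bigr)$, and then simply substitute $(j_0,j_1,j_2) = (0,q-1,q)$ and $(\varepsilon_0,\varepsilon_1,\varepsilon_2) = (0,1,q)$ to get $v_P(R) = q-2$. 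Everything else is immediate from Lemma \ref{lem31jan2018} and the preamble.
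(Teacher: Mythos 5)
Your Step 1 coincides with the paper's argument: by Lemma \ref{lem31jan2018} the unique branch of $\cC$ centred at $P$ has order $q-1$ and meets its tangent with intersection multiplicity $q$, so the $(\mathcal{L},P)$-orders are $(0,q-1,q)$. The gap is in Step 2, at the justification of $v_P(R)=\sum_i\bigl(j_i(P)-\varepsilon_i\bigr)$. This equality is \emph{not} a matter of choosing the line $\ell$ off $P$ or of $P$ avoiding some ``extra'' support of ${\rm div}(dx)$: the ramification divisor $R$ is intrinsic (independent of the choice of line and of separating variable), and the general theorem gives only the inequality $v_P(R)\ge\sum_i\bigl(j_i(P)-\varepsilon_i\bigr)$. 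In characteristic $p$ the inequality can be strict, and the correct criterion for equality is the nonvanishing mod $p$ of the determinant $\det\bigl(\binom{j_i(P)}{\varepsilon_k}\bigr)$; this is \cite[Theorem 7.55]{HKT}, which is exactly what the paper's proof checks. Since the points of $\cC(\mathbb{F}_{q^2})$ are precisely the special (wildly ramified, singular) points with $j_1=q-1$, $j_2=q$, appealing to genericity or to the position of $\ell$ does not settle the value of $v_P(R)$ there.

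The missing verification is short: with $(j_0,j_1,j_2)=(0,q-1,q)$ and $(\varepsilon_0,\varepsilon_1,\varepsilon_2)=(0,1,q)$ the matrix $\bigl(\binom{j_i}{\varepsilon_k}\bigr)$ is lower triangular, with determinant $\binom{0}{0}\binom{q-1}{1}\binom{q}{q}=q-1\equiv-1\not\equiv0\pmod p$, so equality holds and $v_P(R)=(q-1-1)+(q-q)=q-2$. Your fallback route --- computing ${\rm ord}_\gamma(W_R)$ directly from the branch parametrization via Hasse derivatives, together with ${\rm ord}_\gamma(dx)=q-2$ --- would also prove the lemma, but you explicitly declined to carry it out; as written, the claim $v_P(R)=q-2$ is therefore not established.
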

\begin{proof}
Let $P\in \cC(\mathbb{F}_{q^2})$. Then the $({\mathcal{L}},P)$-order sequence is $(0,q-1,q)$ as a consequence of Lemma \ref{lem31jan2018}.
Furthermore, observe that the matrix $\left(\left(\begin{array}{c}
 j_i \\
\varepsilon_k
\end{array} \right)\right)$
has determinant $q-1 \not\equiv 0 \pmod{p}$. Therefore $v_P(R)=q-2$ from \cite[Theorem 7.55]{HKT}.
\end{proof}
\begin{lem}
\label{lemB20mar2018}
For a point $P\not\in \cC(\mathbb{F}_{q^2})\cup  \cC(\mathbb{F}_{q^3})$, the $({\mathcal{L}},P)$-order sequence is $(0,1,q)$, and $v_P(R)=0$.
\end{lem}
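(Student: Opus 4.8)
The plan is to proceed exactly as in the proof of Lemma \ref{lemA20mar2018}, namely by computing the $(\mathcal{L},P)$-order sequence at the relevant points and then applying the formula for $v_P(R)$ in terms of that sequence and the $\mathcal{L}$-order sequence $(\varepsilon_0,\varepsilon_1,\varepsilon_2)=(0,1,q)$ obtained from Proposition \ref{propA19mar2018}. First I would observe that a point $P\notin \cC(\mathbb{F}_{q^2})\cup\cC(\mathbb{F}_{q^3})$ is, by Lemma \ref{lem2Feb}, a nonsingular point of $\cC$ (the singular points all lie in $PG(2,\mathbb{F}_{q^2})\setminus PG(2,\mathbb{F}_q)$), so the unique branch centred at $P$ is smooth and the possible $(\mathcal{L},P)$-order sequences are $(0,1,j_2)$ with $j_2\geq 2$, where $j_2=I(P,\cC\cap r)$ for $r$ the tangent line to $\cC$ at $P$. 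Since $\cC$ is non-classical with $\mathcal{L}$-order sequence $(0,1,q)$, the generic value of $j_2$ is $q$, and a nonsingular point fails to be ordinary (i.e. has $j_2>q$) only at a finite set; the claim to establish is that $P$ is one of the generic points, i.e. $j_2=q$ with no contribution to $R$.

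The key step is to show $j_2=q$ at every such $P$. I would argue this by ruling out $j_2>q$: a nonsingular point $P$ with $j_2>q$ would be a flex of order strictly greater than the generic one, and these are precisely the points appearing with positive multiplicity in the ramification divisor $R$. The natural approach is to identify where $R$ is supported. We already know from Lemma \ref{lemA20mar2018} that every point of $\cC(\mathbb{F}_{q^2})$ contributes $q-2$ to $\deg(R)$, and there are $q^4-q$ such points; a similar analysis (carried out presumably in a subsequent lemma for points of $\cC(\mathbb{F}_{q^3})$) will pin down the remaining contributions, and a degree count using $\deg(R)=(q+1)(2\gg(\cC)-2)+3(q^3-q^2)=q(q-1)(q^4+q^3-2q^2-q-2)$ will show that $\cC(\mathbb{F}_{q^2})$ together with $\cC(\mathbb{F}_{q^3})$ already account for all of $\deg(R)$, forcing $v_P(R)=0$ — hence $j_2=q$ — for every $P$ outside these two sets. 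Alternatively, and more directly, one can compute $D^{(q)}(y)$ (the Wronskian $W_R$ up to the divisor-of-$dx$ correction) using the explicit relation $D_2F=G_1^qx+G_2^qy+G_0^qz$ from Proposition \ref{propA19mar2018}, and check that the function $W_R$ has no zero at a point $P$ with $a=x(P)\notin\mathbb{F}_{q^2}$ and $P\notin\cC(\mathbb{F}_{q^3})$; here $dx$ has order $0$ at $P$ (Case (c) of the genus computation shows $\mathrm{ord}_\gamma(dx)=0$ unless the tangent is vertical, which forces $a\in\mathbb{F}_{q^2}$) and $V$ has no support at $P$ since $P$ lies on no line of $PG(2,\mathbb{F}_q)$ through... in fact one must be slightly careful, but $P\notin\Lambda_1\cup\Lambda_2$ is not assumed — rather one uses that $3V$ is supported on $\cC\cap PG(2,\mathbb{F}_q)$-lines, contributing $0$ at generic $P$.

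Once $j_2=q$ is known, the computation of $v_P(R)$ is immediate: with $(j_0,j_1,j_2)=(0,1,q)=(\varepsilon_0,\varepsilon_1,\varepsilon_2)$ the matrix $\left(\binom{j_i}{\varepsilon_k}\right)$ is the identity, so its determinant is $1\not\equiv 0\pmod p$, and \cite[Theorem 7.55]{HKT} gives $v_P(R)=\sum_{i=0}^{2}(j_i-\varepsilon_i)=0$. The main obstacle is the first part: cleanly justifying that a nonsingular point with $x$-coordinate outside $\mathbb{F}_{q^2}$ and not in $\cC(\mathbb{F}_{q^3})$ is an ordinary point of $\cC$. I expect the cleanest route is the degree count, deferring the precise $R$-contribution of the $\cC(\mathbb{F}_{q^3})$-points to the companion lemma, together with the remark that $R$ is an effective divisor so that once its degree is exhausted on $\cC(\mathbb{F}_{q^2})\cup\cC(\mathbb{F}_{q^3})$ nothing is left elsewhere; this also automatically handles the correction terms $\mathrm{div}(dx)$ and $V$, since at a point $P$ outside $\cC(\mathbb{F}_{q^2})$ with non-vertical tangent both have order $0$ there by the Case (c) analysis and by $P\notin\Lambda_1\cup\Lambda_2$ respectively, so that $v_P(R)=\mathrm{ord}_P(W_R)$ reduces to the order-sequence formula.
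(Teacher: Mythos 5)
There is a genuine gap in your main route. You propose to pin down $v_P(R)$ outside $\cC(\mathbb{F}_{q^2})\cup\cC(\mathbb{F}_{q^3})$ by a degree count in which the contribution of the $\cC(\mathbb{F}_{q^3})$-points is ``deferred to the companion lemma''. But in the paper's logical order that companion statement (Lemma \ref{lemD20mar2018}, giving $v_P(R)=1$ and $j_2=q+1$ at points of $\cC(\mathbb{F}_{q^3})$) is itself \emph{deduced from} the present lemma by exactly the degree count $\deg(R)=(q-2)(q^4-q)+m(q^6-q^5-q^4+q^3)$; so as written your argument is circular. The numerical exhaustion $(q-2)(q^4-q)+(q^6-q^5-q^4+q^3)=\deg(R)$ is indeed correct, but to use it you would need an independent proof that $v_P(R)\geq 1$ (equivalently $j_2\geq q+1$) at every point of $\cC(\mathbb{F}_{q^3})$, and you give none; Frobenius non-classicality over $\mathbb{F}_{q^3}$ alone does not hand you this, since $\Phi_{q^3}$ fixes such points. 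Your alternative suggestion (compute $W_R=D^{(q)}(y)$ explicitly and check it does not vanish at the points in question) is only a sketch; carrying it out is essentially equivalent to the statement itself and is not done.

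The paper avoids all of this with a one-line use of the automorphism group, which your proposal never invokes: by Proposition \ref{Orbite21Feb} the only short orbits of $\aut(\cC)$ are $\cC(\mathbb{F}_{q^2})$ and $\cC(\mathbb{F}_{q^3})$, so the orbit of any other point $P$ is long, of size $|PGL(3,q)|=q^3(q^3-1)(q^2-1)$. Since $R$ is invariant under $\aut(\cC)$, $v_P(R)=m$ is constant along the orbit, and $m>0$ would force $\deg(R)\geq m\,q^3(q^3-1)(q^2-1)$, which exceeds $\deg(R)=q(q-1)(q^4+q^3-2q^2-q-2)$; hence $m=0$, and the order sequence $(0,1,q)$ follows. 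Your concluding step (with $(j_0,j_1,j_2)=(0,1,q)$ the determinant of $\bigl(\binom{j_i}{\varepsilon_k}\bigr)$ is $1$, so $v_P(R)=0$) is fine, but the crux --- ruling out $j_2>q$ at these points --- is exactly what your write-up leaves unproved.
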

\begin{proof}
Assume on the contrary that $v_P(R)=m>0$ for some point $P\in \Gamma$ with $\Gamma=\cC\setminus(\cC(\mathbb{F}_{q^2})\cup  \cC(\mathbb{F}_{q^3}))$.
Since $\cC(\mathbb{F}_{q})=\emptyset$ by Proposition \ref{propA30gen}, the orbit of $P$ in $\aut(\cC)$ is long by Proposition \ref{Orbite21Feb}. Therefore
$$\sum_{P\in \Gamma} v_P(R)=m|PGL(3,q)|=mq^3(q^3-1)(q^2-1).$$
But this contradicts $\deg(R)=q(q-1)(q^4+q^3-2q^2-q-2)$. Then $v_P(R)=0$ for any point $P\not\in \cC(\mathbb{F}_{q^2})\cup  \cC(\mathbb{F}_{q^3})$ and the $({\mathcal{L}},P)$-order sequence is $(0,1,q)$.
\end{proof}
\begin{lem}
\label{lemD20mar2018}
For $P\in \cC(\mathbb{F}_{q^3})$, the $({\mathcal{L}},P)$-order sequence is $(0,1,q+1)$, and $v_P(R)=1$.
\end{lem}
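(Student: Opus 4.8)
The plan is to determine the $(\mathcal{L},P)$-order sequence at a point $P\in\cC(\mathbb{F}_{q^3})$ by working out a primitive branch representation at such a point, exactly as was done in Case (a)/(b) of the genus computation and in Lemma \ref{lem31jan2018}, and then to read off $v_P(R)$ from the global degree constraint on $R$ together with the values already computed in Lemmas \ref{lemA20mar2018} and \ref{lemB20mar2018}. First I would note that $\cC(\mathbb{F}_{q^3})=\Lambda_3$ consists of nonsingular points by Lemma \ref{lem2Feb}, so each such $P$ is the centre of a single smooth branch. Since the order sequence $(\varepsilon_0,\varepsilon_1,\varepsilon_2)=(0,1,q)$ is the generic one and $q+1>q$, the claim $(0,1,q+1)$ asserts that these points are ``extra'' Weierstrass points for $\mathcal{L}$, i.e. the tangent line meets $\cC$ at $P$ with multiplicity $q+1$ rather than the generic $q$. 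To establish this I would use that $\cC$ is $\mathbb{F}_{q^3}$-Frobenius non-classical (Proposition \ref{propB19mar2018}): at $P\in\cC(\mathbb{F}_{q^3})$ we have $P^{q^3}=P$, so the tangent line at $P$ passes through $P^{q^3}=P$ trivially, but the Frobenius non-classicality forces an extra incidence that raises the contact order. Concretely, using $D_1(x,y,z)=D_2(x,y,z)F(x,y,z)$ and the vanishing identity $G_1x^{q^2}+G_2y^{q^2}+G_0z^{q^2}\equiv 0$ from the proof of Proposition \ref{propB19mar2018}, I would expand $D_1$ along a branch $(x(t),y(t))$ centred at $P$ and compare leading terms to see that the $j_2$-order jumps from $q$ to $q+1$.

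The cleanest route is probably the local one: after applying a suitable element of $PGL(3,q)$ I may assume the tangent at $P$ is the line $y=0$ (so $j_1=I(P,\cC\cap r)$ for a generic $r$ is $1$ and $j_2=I(P,\cC\cap\{y=0\})$), and then I would compute a primitive representation $x(t)=a+\cdots$, $y(t)=c_kt^{j_2}+\cdots$ of the unique branch at $P$, using the equation $D_1(x(t),y(t),1)=0$ in $\K[[t]]$ and differentiating as in Case (b) of the genus proof. The intersection multiplicity with the tangent is $j_2=\mathrm{ord}_t\,y(t)$. The key point is that for $P\in\Lambda_3$ the relevant minor determinant which vanished to order $q$ at a generic point (governed by $D'(y)=(a-a^{q^2})^q$ being nonzero off $\mathbb{F}_{q^2}$) now picks up an additional factor because $a^{q^3}=a$; tracking this extra factor shows $j_2=q+1$. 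Once the order sequence $(0,1,q+1)$ is in hand, \cite[Theorem 7.55]{HKT} gives $v_P(R)\ge \sum_{i}(j_i-\varepsilon_i)=(0-0)+(1-1)+(q+1-q)=1$, with equality unless a further degeneracy occurs, which one checks is excluded here by a determinant-non-divisibility argument analogous to the one in Lemma \ref{lemA20mar2018}.

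Finally I would close the argument by the counting identity for $\deg(R)$. We have $\deg(R)=(q+1)(2\gg(\cC)-2)+3(q^3-q^2)=q(q-1)(q^4+q^3-2q^2-q-2)$. The points of $\cC(\mathbb{F}_{q^2})$ contribute $(q-2)$ each and there are $q^4-q$ of them (the orbit $\Omega$); the points of $\cC(\mathbb{F}_{q^3})=\Lambda_3$ contribute the claimed $1$ each, and $|\Lambda_3|=q^6-q^5-q^4+q^3$; all other points contribute $0$ by Lemma \ref{lemB20mar2018}. So it suffices to verify the arithmetic identity $(q-2)(q^4-q)+1\cdot(q^6-q^5-q^4+q^3)=q(q-1)(q^4+q^3-2q^2-q-2)$, which confirms both the value $v_P(R)=1$ on $\Lambda_3$ and that there are no further contributions; conversely, if one only has $v_P(R)\ge 1$ on $\Lambda_3$ and $\ge 0$ elsewhere from the order-sequence computation, this exact matching of degrees forces equality everywhere, completing the proof. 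The main obstacle I anticipate is the branch expansion in the first step: correctly identifying, from the $3\times 3$ determinant $D_1$, the precise order to which $y(t)$ vanishes, i.e. showing the contact order is exactly $q+1$ and not higher — this requires a careful bookkeeping of the Frobenius-type cancellations at a point of $\Lambda_3$, and is the place where the special arithmetic of $\mathbb{F}_{q^3}$ versus $\mathbb{F}_{q^2}$ enters decisively.
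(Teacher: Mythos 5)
Your overall architecture (establish the order sequence locally at points of $\Lambda_3$, then use the exact degree of $R$ together with Lemmas \ref{lemA20mar2018} and \ref{lemB20mar2018} to pin $v_P(R)=1$) is coherent, but its central step is missing: you never actually prove that $j_2\geq q+1$ at a point $P\in\cC(\mathbb{F}_{q^3})$. The heuristic you offer is, as you yourself half-concede, vacuous: at a rational point $\Phi_{q^3}(P)=P$ lies on every line through $P$, so Frobenius non-classicality gives no ``extra incidence'' by pure incidence geometry; and the proposed expansion of $D_1(x(t),y(t),1)$ with its ``Frobenius-type cancellations'' is exactly the bookkeeping you flag as the main obstacle and do not carry out. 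Without that lower bound (or some substitute), your final degree count only verifies consistency of the value $1$; it cannot force $v_P(R)=1$ at each point, since a priori some points of $\Lambda_3$ could contribute $0$ and others more. If you want to keep your local-first route, the standard way to close it is not incidence but the function-theoretic form of Frobenius non-classicality: on the curve one has the identity $y^{q^3}-y=y'(x^{q^3}-x)$ with $y'=dy/dx$; at a nonsingular point with $x_0,y_0\in\mathbb{F}_{q^3}$ and tangent $y=y_0$, comparing orders gives $\mathrm{ord}_P(y')=j_2-1$, hence $p\nmid j_2$, and since always $j_2\geq\varepsilon_2=q$ this yields $j_2\geq q+1$; then $v_P(R)\geq 1$ by \cite[Theorem 7.55]{HKT} and your degree count finishes.

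The paper avoids any local computation by running the argument in the opposite order: $\cC(\mathbb{F}_{q^3})$ is a single orbit of $\aut(\cC)$ (the stabiliser being a Singer subgroup, as in Proposition \ref{Orbite21Feb}), so $v_P(R)=m$ is \emph{constant} on $\Lambda_3$; the degree identity $\deg(R)=(q-2)(q^4-q)+m(q^6-q^5-q^4+q^3)$ then forces $m=1$ with no lower bound needed, and the order sequence is deduced afterwards: $v_P(R)=1>0$ means the $(\mathcal{L},P)$-orders are non-generic, so $j_2\geq q+1$, while $j_2>q+1$ would give $v_P(R)\geq\sum_i(j_i-\varepsilon_i)\geq 2$, a contradiction. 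You never invoke this transitivity, which is precisely what makes the unproven local step dispensable; either supply the orbit argument or the order-of-vanishing argument above, but as written the proof has a genuine gap at its key claim.
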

\begin{proof} Let $v_P(R)=m$ for a point $P\in \cC(\mathbb{F}_{q^3})$. Since $\cC(\mathbb{F}_{q^3})$ is an orbit of
$\aut(\cC)$, we have $v_P(R)=m$ for every $P\in \cC(\mathbb{F}_{q^3})$. From Lemmas \ref{lemA20mar2018} and \ref{lemB20mar2018}, $q(q-1)(q^4+q^3-2q^2-q-2)=\deg(R)=(q-2)(q^4-q)+m(q^6-q^5-q^4+q^3),$ whence $m=1$. In particular, $j_2\geq q+1$. On the other hand, if $j_2>q+1$ then $v_P(R)\ge \sum_{i=0}^2 (j_i-\varepsilon_i)>1$ by \cite[Theorem 7.55]{HKT}. Therefore, $j_2=q+1$.
\end{proof}
As a corollary we have the following result.
\begin{prop}
\label{lemA21mar2018}
$$
v_P(R)=
\begin{cases}
q-2, & \mbox{for } P\in \cC(\mathbb{F}_{q^2}),\\
1, & \mbox{for } P\in \cC(\mathbb{F}_{q^3}),\\
 0, & \mbox{otherwise }.
\end{cases}
$$
\end{prop}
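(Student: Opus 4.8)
The statement is merely a summary of the three preceding lemmas, so the plan is to assemble Lemmas \ref{lemA20mar2018}, \ref{lemB20mar2018}, and \ref{lemD20mar2018} into a single case distinction indexed by the location of the point $P\in\cC$. First I would recall that by Lemma \ref{lem2Feb} and Proposition \ref{propA30gen} the point set of $\cC$ is partitioned into $\cC(\mathbb{F}_{q^2})$ (the singular points, all lying in $PG(2,\mathbb{F}_{q^2})\setminus PG(2,\F)$ together with the $q^2-q$ points on $\ell_\infty$), the set $\cC(\mathbb{F}_{q^3})$ (which by Lemma \ref{le14mar2018} is exactly $\Lambda_3$), and the remaining points $P\notin\cC(\mathbb{F}_{q^2})\cup\cC(\mathbb{F}_{q^3})$. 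These three pieces are disjoint since $\cC(\mathbb{F}_{q^2})\cap\cC(\mathbb{F}_{q^3})\subseteq\cC(\mathbb{F}_q)=\emptyset$.

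The three cases then read off directly: for $P\in\cC(\mathbb{F}_{q^2})$, Lemma \ref{lemA20mar2018} gives $v_P(R)=q-2$; for $P\in\cC(\mathbb{F}_{q^3})$, Lemma \ref{lemD20mar2018} gives $v_P(R)=1$; and for every other point of $\cC$, Lemma \ref{lemB20mar2018} gives $v_P(R)=0$. Since these three subsets exhaust $\cC$, the piecewise formula for $v_P(R)$ follows.

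There is essentially no obstacle here — the real work was already carried out in Lemmas \ref{lemA20mar2018}--\ref{lemD20mar2018}, whose proofs combine the local branch analysis of Lemma \ref{lem31jan2018}, the orbit structure from Proposition \ref{Orbite21Feb} (so that $\sum_P v_P(R)$ over the long orbit is a multiple of $|PGL(3,q)|$ and is forced to vanish by the degree count), and the global identity $\deg(R)=(q+1)(2\gg(\cC)-2)+3(q^3-q^2)=q(q-1)(q^4+q^3-2q^2-q-2)$. The only thing to check in the present corollary is the consistency of the total degree: one verifies that $(q-2)|\cC(\mathbb{F}_{q^2})|+1\cdot|\cC(\mathbb{F}_{q^3})|=(q-2)(q^4-q)+(q^6-q^5-q^4+q^3)$ equals $\deg(R)$, which is precisely the equation used in the proof of Lemma \ref{lemD20mar2018} to solve for $m$; since that check was already made, nothing new is needed and the proof is a one-line appeal to the three lemmas.
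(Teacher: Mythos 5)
Your proposal is correct and matches the paper exactly: the paper states this proposition without proof, introducing it as an immediate corollary of Lemmas \ref{lemA20mar2018}, \ref{lemB20mar2018} and \ref{lemD20mar2018}, which is precisely the assembly you carry out (your added disjointness remark $\cC(\mathbb{F}_{q^2})\cap\cC(\mathbb{F}_{q^3})\subseteq\cC(\mathbb{F}_q)=\emptyset$ and the degree consistency check are harmless extras). Nothing further is needed.
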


Since $\mathcal{L}$ is defined over $\mathbb{F}_q$, $\cC$ also has its $\mathbb{F}_q$-Frobenius order sequence $(\nu_0,\nu_1)$. In our case $\nu_0=0$ and $\nu_1=q$ by Proposition \ref{propA19mar2018}, and the St\"ohr-Voloch divisor of $\mathcal{L}$ over $\mathbb{F}_q$ is
 $$S={\rm{div}}(W_S)+q{\rm{div}}(dx)+(q+2)V.$$
Thus $\deg(S)=q(2\gg(\cC)-2)+(q+2)(q^3-q^2)=q^6-q^5-q^4+q^3$.
\begin{prop}
\label{prop21mar2018}
$$
v_P(S)=
\begin{cases}
1, & \mbox{for } P\in \cC(\mathbb{F}_{q^3}),\\
 0, & \mbox{otherwise. }
\end{cases}
$$
\end{prop}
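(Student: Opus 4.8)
The plan is to compute $v_P(S)$ for every $P\in\cC$ by playing the degree formula $\deg(S)=q^6-q^5-q^4+q^3$ against orbit-size information, exactly in the style of the proof of Proposition~\ref{lemA21mar2018}. First I would recall that since $\cC$ is $\mathbb{F}_q$-Frobenius non-classical (Proposition~\ref{propA19mar2018}), the $\mathbb{F}_q$-Frobenius order sequence is $(\nu_0,\nu_1)=(0,q)$, so by \cite[Theorem~8.51]{HKT} one has $v_P(S)\ge 0$ everywhere, and $v_P(S)>0$ precisely when $P$ is an $\mathbb{F}_q$-Frobenius point with a non-generic local behaviour — and in any case every $\mathbb{F}_q$-rational point contributes, but here $\cC(\mathbb{F}_q)=\emptyset$ by Proposition~\ref{propA30gen}. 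The key structural input is that $\aut(\cC)=PGL(3,q)$ acts on $\cC$ with the three orbit types identified earlier: the non-tame short orbit $\Omega=\cC(\mathbb{F}_{q^2})$ of size $q^4-q$, the tame short orbit $\Delta=\cC(\mathbb{F}_{q^3})\cap\Lambda_3$ of size $q^6-q^5-q^4+q^3$, and all remaining points lying on long orbits (Proposition~\ref{Orbite21Feb} together with Lemma~\ref{le14mar2018}). Since $S$ is $PGL(3,q)$-invariant, $v_P(S)$ is constant on each orbit; call the common values $a$ on $\cC(\mathbb{F}_{q^2})$, $b$ on $\cC(\mathbb{F}_{q^3})$, and $c$ on a generic long orbit.

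Next I would pin down $a$ and $c$. For a generic point $P$, the local analysis already done for the ramification divisor (Lemmas~\ref{lemB20mar2018}) shows the $(\mathcal{L},P)$-order sequence is $(0,1,q)$; combined with $(\nu_0,\nu_1)=(0,q)$ and \cite[Proposition~8.42, Theorem~8.51]{HKT}, a generic point is a non-$\mathbb{F}_q$-Frobenius point with $v_P(S)=0$, so $c=0$. For $P\in\cC(\mathbb{F}_{q^2})$ I would use the explicit branch expansion \eqref{branchP}: with $x=x(t)=\alpha+c_1t^{q-1}+\dots$, $y=y(t)=d_1t^q+\dots$, one computes $W_S=(x-x^q)D^{(q)}(y)$ and ${\rm div}(dx)$ locally. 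Here $x-x^q$ has a zero of order depending on whether $\alpha\in\mathbb{F}_{q^2}$: since $\alpha\notin\mathbb{F}_q$, $x(t)-x(t)^q$ is a unit times $(\alpha-\alpha^q)+\dots$, hence has order $0$ in $t$; meanwhile $\ord_\gamma(dx)=q-2$ (from the genus computation, Case (a)) and $v_P(V)=q$. Assembling $v_P(S)={\rm ord}_P(W_S)+q\,{\rm ord}_P(dx)+(q+2)\,{\rm ord}_P(V)$ and comparing with the contribution forced by the degree balance is the cleanest route; I expect this to yield $a=0$ as well, consistent with the claimed answer. The one delicate bookkeeping point is that ${\rm ord}_P$ here means ${\rm ord}_\gamma$ along the unique branch (Corollary~\ref{cor31gen2018}), so the factor $q$ from the branch multiplicity must be tracked carefully — this is the main obstacle, since a sign or a missing factor of $q$ would corrupt the final count.

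Finally, with $a=c=0$ established, the degree identity collapses to
\begin{equation*}
q^6-q^5-q^4+q^3=\deg(S)=\sum_{P\in\cC}v_P(S)=b\cdot|\cC(\mathbb{F}_{q^3})|=b\,(q^6-q^5-q^4+q^3),
\end{equation*}
using \eqref{leA14mar2018}, forcing $b=1$. Alternatively — and this is the cross-check I would include — one knows a priori from \cite[Theorem~8.51]{HKT} that $v_P(S)\ge 1$ for every $\mathbb{F}_q$-rational point and more generally that $\deg(S)\ge \sum_{P\in\cC(\mathbb{F}_q)}(\dots)$ is replaced, in the Frobenius-nonclassical setting over $\mathbb{F}_{q^3}$ (Proposition~\ref{propB19mar2018}), by the fact that every point of $\cC(\mathbb{F}_{q^3})$ lies on its own tangent's Frobenius image, so $v_P(S)\ge 1$ there; combined with the exact value $|\cC(\mathbb{F}_{q^3})|=q^6-q^5-q^4+q^3=\deg(S)$ from Lemma~\ref{le14mar2018}, equality is forced, giving simultaneously $v_P(S)=1$ on $\cC(\mathbb{F}_{q^3})$ and $v_P(S)=0$ off it. I would present this second argument as the main line since it avoids the fragile local expansion at $\cC(\mathbb{F}_{q^2})$, using the explicit branch computation only as a sanity check.
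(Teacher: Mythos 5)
Your overall skeleton (invariance of $S$ under $\aut(\cC)\cong PGL(3,q)$, hence constancy of $v_P(S)$ on orbits, played against $\deg(S)=q^6-q^5-q^4+q^3$) is exactly the paper's approach, but the argument you promote to the main line has a genuine gap. The a priori bound $v_P(S)\ge 1$ that you claim for every $P\in\cC(\mathbb{F}_{q^3})$ is not justified: $S$ is the St\"ohr--Voloch divisor of $\mathcal{L}$ over $\mathbb{F}_q$, and the lower bounds of the type you cite from \cite[Section 8.6]{HKT} apply to $\mathbb{F}_q$-rational points, of which there are none here (Proposition \ref{propA30gen}); $\mathbb{F}_{q^3}$-Frobenius non-classicality (Proposition \ref{propB19mar2018}) concerns the divisor $S'$ built from the $q^3$-Frobenius map, and says nothing about $v_P(S)$. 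Moreover the geometric fact you invoke is empty for this purpose: for $P\in\cC(\mathbb{F}_{q^3})$ one has $P^{q^3}=P$, which lies on every line through $P$, and for the $q$-Frobenius the tangent contains $P^q$ at \emph{every} nonsingular point by $\mathbb{F}_q$-Frobenius non-classicality, so neither property can single out $\cC(\mathbb{F}_{q^3})$ as lying in the support of $S$. Without $v_P(S)\ge 1$ on $\cC(\mathbb{F}_{q^3})$, the ``equality is forced'' step collapses.

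Your backup line is essentially the paper's proof but is left incomplete at two points, both fixable. First, ``generic points have $v_P(S)=0$'' only says that all but finitely many points lie outside the support of $S$; to get $v_P(S)=0$ on every long orbit you must argue, as in the proof of Lemma \ref{lemB20mar2018} with $R$ replaced by $S$ (using Proposition \ref{Orbite21Feb}), that a positive value on a long orbit would contribute at least $|PGL(3,q)|=q^3(q^3-1)(q^2-1)>\deg(S)$. Second, the delicate local expansion at $\cC(\mathbb{F}_{q^2})$, which you only sketch and ``expect'' to give $a=0$, is unnecessary: once the long orbits are disposed of, the identity $q^6-q^5-q^4+q^3=\deg(S)=m_1(q^6-q^5-q^4+q^3)+m_2(q^4-q)$ in non-negative integers already forces $m_1=1$ and $m_2=0$, since $m_1\ge 2$ is too large and $m_1=0$ would require $q^4-q$ to divide $q^6-q^5-q^4+q^3$, which fails for $q\ge 3$ (the quotient reduces to $(q^2+q+1)\mid(q^4-q^2)$, impossible as $q^4-q^2\equiv 2q+1 \pmod{q^2+q+1}$). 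This is how the paper concludes, and it replaces both the unproved bound in your main line and the unfinished computation of the value on $\cC(\mathbb{F}_{q^2})$.
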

\begin{proof}
Replacing $R$ with $S$ in the proof of Lemma \ref{lemB20mar2018} we see that $v_P(S)=0$ for $P\in \cC\setminus(\cC(\mathbb{F}_{q^2})\cup  \cC(\mathbb{F}_{q^3}))$.
Let $m_1=v_P(S)$ for $P\in \cC(\mathbb{F}_{q^3})$ and $m_2=v_P(S)$ for $P\in \cC(\mathbb{F}_{q^2})$.
Then $q^6-q^5-q^4+q^3=\deg(S)=m_1(q^6-q^5-q^4+q^3)+m_2(q^4-q)$, whence  $m_1=1$ and $m_2=0$ follow.
\end{proof}
\begin{lem}
\label{le22mar2018} Let $\cD$ be a plane absolutely irreducible curve defined over $\mathbb{F}_{q^3}$ which is
non-classical with order sequence $(0,1,p^\alpha q)$, $\alpha\geq 0$. Then $\alpha=0$ if $\cD$ has the following properties:
\begin{itemize}
\item[\rm(i)] $\deg(\cD)=\deg(\cC)=q^3-q^2$,
\item[\rm(ii)] $|\mathbb{F}_{q^3}(\cU)|\geq |\mathbb{F}_{q^3}(\cC)|=q^6-q^5-q^4+q^3$, where $\cU$ is a nonsingular model of $\cD$ defined over $\mathbb{F}_{q^3}$.
\end{itemize}
 \end{lem}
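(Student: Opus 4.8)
The plan is to bring the St\"ohr-Voloch machinery of Section \ref{sectionnc} to bear on $\cD$, regarded over $\mathbb{F}_{q^3}$. Let $\mathcal{L}$ be the linear series cut out on the nonsingular model $\cU$ by lines; by hypothesis its order sequence is $(0,1,p^{\alpha}q)$. Write $g=\gg(\cU)$, $d=q^{3}-q^{2}$ and $N=|\mathbb{F}_{q^{3}}(\cU)|$, so that (ii) reads $N\ge q^{6}-q^{5}-q^{4}+q^{3}$, and let $(0,\nu_{1})$ be the $\mathbb{F}_{q^{3}}$-Frobenius order sequence of $\mathcal{L}$. Since the Frobenius orders form a subsequence of the orders, $\nu_{1}\in\{1,\,p^{\alpha}q\}$; the argument consists in first forcing $\nu_{1}=p^{\alpha}q$ and then excluding $\alpha\ge 1$.

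To rule out $\nu_{1}=1$: as $\cD$ is non-classical, $j_{2}(P)\ge\varepsilon_{2}=p^{\alpha}q$ at every point $P$ of $\cU$, so the St\"ohr-Voloch estimate for the order at a rational point of the St\"ohr-Voloch divisor $S$ gives $v_{P}(S)\ge j_{1}(P)+j_{2}(P)-1\ge p^{\alpha}q$ for all $P\in\cU(\mathbb{F}_{q^{3}})$. Hence $N\,p^{\alpha}q\le\deg S=(2g-2)+(q^{3}+2)d$. Using the crude bound $g\le\binom{d-1}{2}$ and $d=q^{3}-q^{2}$, the right-hand side is at most $2q^{6}-3q^{5}+q^{4}-q^{3}+q^{2}$, and since $p^{\alpha}q\ge q$ this yields $N\le 2q^{5}-3q^{4}+q^{3}-q^{2}+q$, which for $q\ge 3$ is strictly smaller than $q^{6}-q^{5}-q^{4}+q^{3}$ --- contradicting (ii). Therefore $\nu_{1}=p^{\alpha}q$, i.e.\ $\cD$ is $\mathbb{F}_{q^{3}}$-Frobenius non-classical.

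Now $\nu_{1}=\varepsilon_{2}=p^{\alpha}q$, and I would argue as in Lemmas \ref{lemA20mar2018}--\ref{lemD20mar2018}. Since $D^{(p^{\alpha}q)}(x)=0$, the St\"ohr-Voloch Wronskian factors as $W_{S}=(x-x^{q^{3}})\,W_{R}$ with $W_{R}=D^{(p^{\alpha}q)}(y)$; at a rational point $P$ with $x$ unramified and $P$ off the support of $V$ one has $v_{P}(x-x^{q^{3}})=1$, hence $v_{P}(S)=1+v_{P}(W_{R})=1+v_{P}(R)$. As the St\"ohr-Voloch theorem gives $v_{P}(S)\ge 2$ at every rational point (this is what underlies $N\le\frac12\deg S$), we obtain $v_{P}(W_{R})\ge 1$, equivalently $j_{2}(P)\ge p^{\alpha}q+1$, so $v_{P}(R)\ge 1$, for all but finitely many $P\in\cU(\mathbb{F}_{q^{3}})$: each $\mathbb{F}_{q^{3}}$-rational point of $\cU$ is a flex of contact order at least $p^{\alpha}q+1$, the analogue for $\cD$ of Lemma \ref{lemD20mar2018}.

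The last step, which I expect to be the real obstacle, is to exclude $\alpha\ge 1$. Unwinding the estimates, $N\le\frac12\deg S=\frac12\bigl(p^{\alpha}q(2g-2)+(q^{3}+2)d\bigr)$ together with (ii) forces $p^{\alpha}q(2g-2)\ge q^{6}-q^{5}-2q^{4}+2q^{2}=q\,(2\gg(\cC)-2)$, that is
\[
2g-2\ \ge\ \tfrac{1}{p^{\alpha}}\,(2\gg(\cC)-2).
\]
So it suffices to establish the \emph{upper} bound $g<\tfrac{1}{2p^{\alpha}}q(q-1)(q^{3}-2q-2)+1$ whenever $\alpha\ge 1$: a plane curve of degree exactly $q^{3}-q^{2}$ that is non-classical with $\varepsilon_{2}=p^{\alpha}q$, $\alpha\ge 1$, cannot carry so large a genus. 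This is the delicate point; the way I would attack it is to play the two divisor identities $\deg R=(1+p^{\alpha}q)(2g-2)+3d$ and $\deg S=p^{\alpha}q(2g-2)+(q^{3}+2)d$ against the contributions to $R$ at the $N$ rational flexes found above and against the constraint $\deg\cD=q^{3}-q^{2}$, which through B\'ezout caps $\varepsilon_{2}$ and controls the branch orders at the singular points of $\cD$. Granting this genus estimate, the displayed inequality fails for $\alpha\ge 1$, whence $\alpha=0$.
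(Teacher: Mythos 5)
Your opening step is fine: with $\nu_1=1$ the St\"ohr--Voloch lower bound $v_P(S)\ge j_1+j_2-1\ge p^\alpha q$ at rational points, together with $g\le\binom{q^3-q^2-1}{2}$, does contradict (ii), so $\nu_1=p^\alpha q$. But the proof then stops exactly where the content of the lemma lies: you never exclude $\alpha\ge 1$. All that (ii) and $N\le\frac12\deg S$ give you is $2g-2\ge\frac{1}{p^\alpha}\bigl(2\gg(\cC)-2\bigr)$, and to turn this into a contradiction you would need the upper bound $g<\frac{1}{2p^\alpha}q(q-1)(q^3-2q-2)+1$ for an \emph{arbitrary} plane curve of degree $q^3-q^2$ that is non-classical with $\varepsilon_2=p^\alpha q$. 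You explicitly defer this (``granting this genus estimate''), and it is not a known or easy fact: the lemma carries no hypothesis on $\gg(\cD)$ (that hypothesis only appears later, in Proposition \ref{pro22mar2018}, whose proof invokes this lemma), and the only bound available from (i) alone is the Pl\"ucker-type bound $g\le\binom{q^3-q^2-1}{2}$, which is far too weak. The plan of ``playing $\deg R$ against $\deg S$'' does not obviously produce such a bound either: both identities contain the unknown $g$ with a positive coefficient, and the flex orders you extract at rational points give lower bounds for $\deg R$, hence lower bounds for $g$, not the upper bound you need. So, as written, the decisive step is a genuine gap.

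For comparison, the paper's proof avoids the genus (and the St\"ohr--Voloch divisor of $\cD$) entirely. It double counts incident pairs $(\gamma,\ell)$, where $\gamma$ runs over the branches of $\cD$ centred at the $\mathbb{F}_{q^3}$-rational points of $\cU$ and $\ell$ over the lines of $PG(2,\mathbb{F}_{q^3})$: each of the $|\Omega|$ branches lies on $q^3+1$ lines, while on a line tangent to $r_\ell$ such branches B\'ezout caps the number of centres by $q^3-q^2-r_\ell\,p^\alpha q$, because each tangency absorbs at least $\varepsilon_2=p^\alpha q$ of the intersection with $\ell$. Since every branch has a unique tangent, $\sum_\ell r_\ell=|\Omega|$, and the count yields $|\Omega|(q^3+1+p^\alpha q)\le(q^6+q^3+1)(q^3-q^2)$; hypothesis (ii) then forces $\alpha=0$ by direct arithmetic. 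If you want to keep your Frobenius-theoretic route, you must first supply an independent genus bound of the strength indicated above; otherwise the incidence count is the argument that actually closes the case $\alpha\ge1$.
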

\begin{proof} Let $\Omega$ denote the set of all branches of $\cD$ which correspond to the $\mathbb{F}_{q^3}$-rational points of $\cU$. Each branch  $\gamma \in\Omega$ is centered at a point in $PG(2,\mathbb{F}_{q^3})$. For any line $\ell$ of $PG(2,\mathbb{F}_{q^3})$, a pair $(\gamma,\ell)$ is called {\emph{incident}} if the center of $\gamma$ lies on $\ell$.

Obviously, we have as many as $|\Omega|(q^3+1)$ incident branch-line pairs $(\gamma,\ell)$.
 On the other hand, for any line $\ell$ of  $PG(2,\mathbb{F}_{q^3})$, let $\lambda(\ell)$ denote the number of branches in $\Omega$ whose center lies on $\ell$.
The double counting of such incident branch-line pairs gives
\begin{equation}
\label{eq13mar2018}
|\Omega|(q^3+1)=\sum_{\ell\in PG(2,\mathbb{F}_{q^3})} \lambda(\ell).
\end{equation}
It is useful to divide the lines of $PG(2,\mathbb{F}_{q^3})$ into two families: $\Sigma_1$ comprises all lines $\ell$ which are tangent to some branches in $\Omega$, while $\Sigma_2$ consists of the remaining lines.
 For a line $\ell\in \Sigma_1$ which is tangent to $\gamma\in \Omega$, the intersection number $I(\gamma,\cD\cap \ell)\ge qp^\alpha$. Therefore, if $\gamma_1,\ldots,\gamma_{r_\ell}\in \Omega$ are the branches in $\Omega$ tangent to $\ell$ then B\'ezout's theorem yields $${\mbox{$\lambda(\ell)=|\cD\cap\ell|\le (q^3-q^2)-r_\ell qp^{\alpha}$, for $\ell\in \Sigma_1$.}}$$
Since each $\gamma$ has a unique tangent, we have $\sum_{\ell\in \Sigma_1}r_{\ell}=|\Omega|$. Furthermore, if $\ell\in \Sigma_2$ then the obvious upper bound on $\lambda(\ell)$ is $q^3-q^2$.
From (\ref{eq13mar2018}),
$$|\Omega|(q^3+1)+|\Omega|p^\alpha q \le (q^6+q^3+1)(q^3-q^2).$$
This together with (ii) give
$$(q^6-q^5-q^4+q^3)(q^3+1+p^\alpha q)\leq (q^6+q^3+1)(q^3-q^2),$$
whence $\alpha=0$ follows.
\end{proof}
\begin{prop}
\label{pro22mar2018} Let $\cD$ be a plane absolutely irreducible curve defined over $\mathbb{F}_{q^3}$ such that
\begin{itemize}
\item[\rm(I)] $\deg(\cD)=\deg(\cC)=q^3-q^2$,
\item[\rm(II)]$\gg(\cD)=\gg(\cC)=\frac{1}{2}q(q-1)(q^3-2q-2)+1$,
\end{itemize}
Then $|\mathbb{F}_{q^3}(\cU)|\leq |\mathbb{F}_{q^3}(\cC)|$, where $\cU$ is a nonsingular model of $\cD$ defined over $\mathbb{F}_{q^3}$.
 \end{prop}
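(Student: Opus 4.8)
The plan is to obtain the inequality from the St\"ohr--Voloch theorem applied over $\mathbb{F}_{q^3}$ to the linear series $\mathcal{L}$ cut out on $\cU$ by the lines of $PG(2,\mathbb{F}_{q^3})$. Since $\cD$ is absolutely irreducible of degree $q^3-q^2>1$, its nonsingular model $\cU$ carries $\mathcal{L}$ as a base-point-free, simple, $\mathbb{F}_{q^3}$-rational $g^2_{q^3-q^2}$, so the theorem applies. Denoting by $(0,\nu_1)$ the $\mathbb{F}_{q^3}$-Frobenius order sequence of $\mathcal{L}$, it gives
\[
|\mathbb{F}_{q^3}(\cU)|\ \le\ \tfrac12\bigl(\nu_1(2\gg(\cD)-2)+(q^3+2)(q^3-q^2)\bigr).
\]
Using (I) and (II), namely $2\gg(\cD)-2=q(q-1)(q^3-2q-2)$ and $q^3-q^2=q^2(q-1)$, I would rewrite the right-hand side as $\tfrac12\,q(q-1)\bigl(\nu_1(q^3-2q-2)+q(q^3+2)\bigr)$.

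The case $\nu_1\le q$ is then immediate: since $q^3-2q-2>0$ for $q\ge 3$, replacing $\nu_1$ by $q$ only enlarges the bound, and a short simplification collapses it to $q^3(q-1)(q^2-1)=q^6-q^5-q^4+q^3$, which by Lemma \ref{le14mar2018} is exactly $|\mathbb{F}_{q^3}(\cC)|$. Hence $|\mathbb{F}_{q^3}(\cU)|\le|\mathbb{F}_{q^3}(\cC)|$; note that equality in the St\"ohr--Voloch bound forces $\nu_1=q$, which is consistent with $\cC$ itself being $\mathbb{F}_{q^3}$-Frobenius non-classical (Proposition \ref{propB19mar2018}).

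It remains to exclude $\nu_1>q$. For a plane curve, $\nu_1\in\{1,\varepsilon_2\}$, where $(0,1,\varepsilon_2)$ is the $\mathcal{L}$-order sequence of $\cD$; since $\nu_1>q\ge 3$, necessarily $\nu_1=\varepsilon_2>2$, so $\cD$ is non-classical and $\varepsilon_2$ is a power of $p$. Writing $q=p^h$ and $\varepsilon_2=p^k$ with $k>h$, one gets $\varepsilon_2=p^\alpha q$ with $\alpha=k-h\ge 1$, so $\cD$ is a non-classical plane curve over $\mathbb{F}_{q^3}$ of order sequence $(0,1,p^\alpha q)$ with $\alpha\ge1$, which together with (I) fulfils hypothesis (i) of Lemma \ref{le22mar2018}. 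If, moreover, $|\mathbb{F}_{q^3}(\cU)|\ge q^6-q^5-q^4+q^3=|\mathbb{F}_{q^3}(\cC)|$, then hypothesis (ii) of Lemma \ref{le22mar2018} holds as well, and the lemma forces $\alpha=0$, a contradiction. Thus when $\nu_1>q$ one necessarily has $|\mathbb{F}_{q^3}(\cU)|<|\mathbb{F}_{q^3}(\cC)|$, and combining the two cases proves $|\mathbb{F}_{q^3}(\cU)|\le|\mathbb{F}_{q^3}(\cC)|$ unconditionally.

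The step I expect to be the main obstacle is the order-sequence bookkeeping needed to enter Lemma \ref{le22mar2018}: one has to invoke the exact relation between the $\mathbb{F}_{q^3}$-Frobenius orders $(0,\nu_1)$ and the $\mathcal{L}$-orders $(0,1,\varepsilon_2)$ of the plane curve $\cD$, together with the fact that $\varepsilon_2$ is either $2$ or a power of $p$, so as to land precisely in the form $(0,1,p^\alpha q)$ with $\alpha\ge1$. One must also be careful that the St\"ohr--Voloch hypotheses are genuinely met (base-point-freeness and simplicity of $\mathcal{L}$ on $\cU$) and that the multiplicative constant $\tfrac12$ is the one appropriate to a plane ($r=2$) series; the arithmetic in the case $\nu_1\le q$ is routine.
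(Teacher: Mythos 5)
Your argument is correct, and it rests on the same two pillars as the paper's proof: Stöhr--Voloch theory over $\mathbb{F}_{q^3}$ applied to the series of lines on $\cD$, and Lemma \ref{le22mar2018} to rule out a Frobenius order larger than $q$ (your reduction for $\nu_1>q$ --- that then $\nu_1=\varepsilon_2$ is a power of $p$ exceeding $q$, hence of the form $p^{\alpha}q$ with $\alpha\ge 1$, so the lemma applies --- is exactly the bookkeeping the paper leaves implicit). Where you genuinely diverge is the final quantitative step. The paper, after securing $1\le\nu'\le q$ under the assumption $|\mathbb{F}_{q^3}(\cU)|\ge|\mathbb{F}_{q^3}(\cC)|$, compares $\deg(S')$ with the divisor $S$ of $\cC$ and then uses the pointwise values $v_P(S)=1$ from Proposition \ref{prop21mar2018}; you instead invoke the classical Stöhr--Voloch bound $|\mathbb{F}_{q^3}(\cU)|\le\tfrac{1}{2}\deg(S')$ (each $\mathbb{F}_{q^3}$-rational point contributes at least $r=2$ to $S'$) and verify that for $\nu_1\le q$ the bound collapses to $q^6-q^5-q^4+q^3$, which equals $|\cC(\mathbb{F}_{q^3})|$ by Lemma \ref{le14mar2018}. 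This buys you two things: the step is self-contained (no appeal to Proposition \ref{prop21mar2018} or to any property of $\cC$ beyond its point count), and it sidesteps a factor-of-two subtlety in the paper's literal comparison: the paper's $S$ is the $\mathbb{F}_q$-Frobenius divisor of $\cC$, of degree $q^6-q^5-q^4+q^3$, whereas for $\nu'=q$ one computes $\deg(S')=2(q^6-q^5-q^4+q^3)$, so the asserted inequality $\deg(S')\le\deg(S)$ fails as stated (already for $\cD=\cC$ viewed over $\mathbb{F}_{q^3}$); the compensating fact is precisely that rational points satisfy $v_{P'}(S')\ge 2$, i.e.\ the $\tfrac12$ in your bound. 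What the paper's variant buys in exchange is the slightly stronger conclusion that, under the hypothesis $|\mathbb{F}_{q^3}(\cU)|\ge|\mathbb{F}_{q^3}(\cC)|$, equality must hold. Your side remarks (base-point-freeness, $\nu_1\in\{1,\varepsilon_2\}$, equality forcing $\nu_1=q$, consistency with Proposition \ref{propB19mar2018}) are all accurate.
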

\begin{proof} Let $\mathcal{L}'$ be the $2$-dimensional linear series cut out on $\cD$ by lines. Then the St\"ohr-Voloch divisor $S'$ of $\mathcal{L}$ over $\mathbb{F}_{q^3}$ has degree
$$\deg(S')=\nu'(2\gg(\cD)-2)+(q^3+2)(q^3-q^2)$$
where $\nu'$ is the $\mathbb{F}_{q^3}$-Frobenius order of $\cD$. Assume that $|\mathbb{F}_{q^3}(\cU)|\geq |\mathbb{F}_{q^3}(\cC)|$. Then either $\cD$ is $\mathbb{F}_{q^3}$-Frobenius classical, or Lemma \ref{le22mar2018} yields $\nu'\le q$. In both cases $1\leq \nu'\leq q$. This together with (I) and (II) yield $\deg(S')\leq \deg(S)$.
Therefore,
\begin{equation}
\label{eqB23mar2018}
\sum_{P'\in \mathbb{F}_{q^3}(\cU)}v_P(S')\leq \sum_{P\in \mathbb{F}_{q^3}(\cC)}v_P(S).
\end{equation}
Since $v_{P'}(S')\geq 1$ for any $P'\in \mathbb{F}_{q^3}(\cU)$ while $v_{P}(S)=1$ for any $P\in \mathbb{F}_{q^3}(\cC)$ by Proposition \ref{prop21mar2018}, (\ref{eqB23mar2018}) yields $|\mathbb{F}_{q^3}(\cU)|=|\mathbb{F}_{q^3}(\cC)|$.
\end{proof}
Proposition \ref{pro22mar2018} has the following corollary.
\begin{cor}
\label{cor24mar2018}
The DGZ curve is a $(q^3-q^2,\frac{1}{2}q(q-1)(q^3-2q-2)+1,3)$ optimal curve over $\mathbb{F}_{q^3}$.
\end{cor}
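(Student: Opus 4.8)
The plan is to derive Corollary \ref{cor24mar2018} as an immediate consequence of Proposition \ref{pro22mar2018}, after observing that the DGZ curve $\cC$ is itself a member of the comparison family, so that the main work has already been done. First I would recall that $\cC$ is absolutely irreducible by Proposition \ref{propA31gen2018}, has degree $q^3-q^2$ by Lemma \ref{lemB0gen2018}, and has genus $\frac{1}{2}q(q-1)(q^3-2q-2)+1$; moreover $\cC$ is defined over $\F$, hence a fortiori over $\mathbb{F}_{q^3}$. Thus $\cC$ lies in the family of all absolutely irreducible plane curves of degree $q^3-q^2$ and genus $\frac{1}{2}q(q-1)(q^3-2q-2)+1$ defined over $\mathbb{F}_{q^3}$, which is exactly the family appearing in the definition of a $(q^3-q^2,\frac{1}{2}q(q-1)(q^3-2q-2)+1,3)$-optimal curve.

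Next I would invoke the point count already obtained for $\cC$: by Corollary \ref{cor31gen2018} the points of $\cC$ are in bijection with the points of its nonsingular model $\cX$, and by Lemma \ref{le14mar2018} together with \eqref{leA14mar2018} one has $|\cX(\mathbb{F}_{q^3})|=|\cC(\mathbb{F}_{q^3})|=|\Lambda_3|=q^6-q^5-q^4+q^3$. On the other hand, Proposition \ref{pro22mar2018} asserts that for every curve $\cD$ of the family, with nonsingular model $\cU$ over $\mathbb{F}_{q^3}$, one has $|\cU(\mathbb{F}_{q^3})|\le |\cC(\mathbb{F}_{q^3})|$. Hence $\cX$ attains the largest possible number of $\mathbb{F}_{q^3}$-rational points within the family, and this is precisely the assertion that $\cC$ is $(q^3-q^2,\frac{1}{2}q(q-1)(q^3-2q-2)+1,3)$-optimal over $\mathbb{F}_{q^3}$.

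Since all the substantive work has already been carried out — in Proposition \ref{pro22mar2018}, which itself rests on the St\"ohr--Voloch divisor of $\cC$ computed in Proposition \ref{prop21mar2018} and on the non-classicality estimate of Lemma \ref{le22mar2018}, and in the explicit count of Lemma \ref{le14mar2018} — there is essentially no obstacle left in the corollary itself; the only point requiring attention is that the optimality being claimed is with respect to exactly the quantity bounded in Proposition \ref{pro22mar2018}, namely the number of $\mathbb{F}_{q^3}$-rational points of a nonsingular model. I would close with a brief remark that the combinatorial analogue announced in the Introduction, optimality with respect to the number of points of the curve lying in $PG(2,\mathbb{F}_{q^3})$, can be obtained by the same scheme, using in addition Corollary \ref{cor31gen2018}, which guarantees that for $\cC$ this number equals $|\cC(\mathbb{F}_{q^3})|$, together with an adaptation to points rather than to $\mathbb{F}_{q^3}$-rational branches of the branch--line double counting employed in Lemma \ref{le22mar2018}.
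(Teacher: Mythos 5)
Your proposal is correct and takes essentially the same route as the paper, which states Corollary \ref{cor24mar2018} as an immediate consequence of Proposition \ref{pro22mar2018}, exactly as you do, using $\cC(\mathbb{F}_{q^3})=\cX(\mathbb{F}_{q^3})=\Lambda_3$ with $|\Lambda_3|=q^6-q^5-q^4+q^3$ from Corollary \ref{cor31gen2018} and Lemma \ref{le14mar2018}. Your explicit check that $\cC$ itself belongs to the comparison family (irreducible, degree $q^3-q^2$, genus $\frac{1}{2}q(q-1)(q^3-2q-2)+1$, defined over $\mathbb{F}_{q^3}$) is a harmless elaboration of what the paper leaves implicit.
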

\begin{remark}
{\emph{Proposition \ref{pro22mar2018} remains valid if $\mathbb{F}_{q^3}(\cU)$ is replaced by the set $\mathbb{F}_{q^3}(\cD)$ of
all points of $\cD$ lying on $PG(2,\mathbb{F}_{q^3})$. In fact, the proof of Proposition \ref{pro22mar2018} still works whenever $\Omega$ stands for $\mathbb{F}_{q^3}(\cD)$ and $\sum_{\ell\in \Sigma_1}r_{\ell}=|\Omega|$ is replaced by $\sum_{\ell\in \Sigma_1}r_{\ell}\ge |\Omega|$.}}
\end{remark}
Finally we point out a combinatorial property of $\cC(\mathbb{F}_{q^3})$. For this purpose, recall that a $(k,n)$-arc $\mathcal{K}$ in the projective plane $\Pi$ consists of $k$ points in $\Pi$ such that some line in $\Pi$ meets $\mathcal{K}$ in exactly $n$ pairwise distinct points but no line in $\Pi$ meets $\mathcal{K}$ in more than $n+1$ points. Furthermore, $\mathcal{K}$ is called complete, that is, it is maximal, if no point $P\in\Pi$ other than those in $\mathcal{K}$ exists such that $\mathcal{K}\cup \{P\}$ is a $(k+1,n)$-arc. Complete $(k,n)$-arcs, especially $(k,2)$-arcs, have  intensively been investigated in Finite geometry, and they have relevant applications in Coding theory; see \cite{HJ} and \cite[Chapter 13]{HKT}. In that context, an interesting problem is to find plane curves $\cD$ defined over a finite field $\mathbb{F}$ whose set of points in $PG(2,\mathbb{F})$ is a complete $(k,n)$-arc. It seems plausible that only a few curves with such combinatorial property may exist, see \cite{Giulietti}. Our contribution in this direction is the following result. 
\begin{prop}
The set $\cC(\mathbb{F}_{q^3})$ is a complete $(q^6-q^5-q^4+q^3,q^3-q^2)$-arc.
\end{prop}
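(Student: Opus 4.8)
The plan is to reduce the whole statement to an elementary counting problem in $PG(2,\mathbb{F}_{q^3})$ relative to the subplane $PG(2,\F)$, because Lemma \ref{le14mar2018} already identifies the point set $\cC(\mathbb{F}_{q^3})$ with $\Lambda_3$, the set of points of $PG(2,\mathbb{F}_{q^3})$ lying on no line of $PG(2,\F)$; the geometry of the curve plays no further role. Write $\mathcal{K}=\cC(\mathbb{F}_{q^3})=\Lambda_3$, so that $|\mathcal{K}|=q^6-q^5-q^4+q^3$ is exactly \eqref{leA14mar2018}.

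First I would compute $|\ell\cap\mathcal{K}|$ for every line $\ell$ of $PG(2,\mathbb{F}_{q^3})$. If $\ell$ is defined over $\F$, then each of its points lies on an $\F$-line (namely $\ell$), so $\ell\cap\mathcal{K}=\emptyset$. If $\ell$ is not defined over $\F$, I would count the incident pairs $(P,L)$ with $P\in\ell$ and $L$ a line of $PG(2,\F)$ through $P$: each of the $q^2+q+1$ lines of $PG(2,\F)$ meets $\ell$ in exactly one point, a point of $\Lambda_1$ lies on $q+1$ of these lines, a point of $\Lambda_2$ on exactly one, a point of $\Lambda_3$ on none, and $\ell$ contains at most one point of $\Lambda_1$ since two $\F$-points span an $\F$-line. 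Hence $q^2+q+1=(q+1)|\ell\cap\Lambda_1|+|\ell\cap\Lambda_2|$ with $|\ell\cap\Lambda_1|\in\{0,1\}$, which gives $|\ell\cap\mathcal{K}|=|\ell\cap\Lambda_3|=q^3-q^2$ when $\ell$ passes through an $\F$-point and $|\ell\cap\mathcal{K}|=q^3-q^2-q$ otherwise. In particular no line meets $\mathcal{K}$ in more than $q^3-q^2$ points, and the $(q^3-q^2)$-secants are exactly the lines not defined over $\F$ that pass through an $\F$-point; such lines exist, because through any of the $q^2+q+1$ points of $PG(2,\F)$ there pass $q^3+1$ lines of $PG(2,\mathbb{F}_{q^3})$ of which only $q+1$ are $\F$-lines. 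Thus $\mathcal{K}$ is a $(q^6-q^5-q^4+q^3,\,q^3-q^2)$-arc.

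It then remains to prove completeness, i.e. that every $P_0\in PG(2,\mathbb{F}_{q^3})\setminus\mathcal{K}=\Lambda_1\cup\Lambda_2$ lies on a $(q^3-q^2)$-secant of $\mathcal{K}$. If $P_0\in\Lambda_1$, each of the $q^3-q>0$ lines through $P_0$ not defined over $\F$ already contains the $\F$-point $P_0$, hence is a $(q^3-q^2)$-secant. If $P_0\in\Lambda_2$, then $P_0$ lies on a unique $\F$-line $L$; picking any of the $q^2>0$ points $A\in PG(2,\F)\setminus L$, the line $P_0A$ is not defined over $\F$ (the only $\F$-line through $P_0$ is $L$, and $A\notin L$) yet contains the $\F$-point $A$, so it is again a $(q^3-q^2)$-secant through $P_0$. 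In either case $\mathcal{K}\cup\{P_0\}$ contains $q^3-q^2+1$ collinear points, so it is not a $(|\mathcal{K}|+1,\,q^3-q^2)$-arc, and $\mathcal{K}$ is complete. I do not anticipate a genuine obstacle here: the only points requiring a moment's care are those of $\Lambda_2$, the exterior points sitting on an $\F$-line (whose $\F$-line is a $0$-secant of $\mathcal{K}$), and the short argument above already handles them; everything else is incidence counting between $PG(2,\mathbb{F}_{q^3})$ and its subplane $PG(2,\F)$.
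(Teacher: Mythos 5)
Your proof is correct, and it follows the paper's overall strategy (identify $\cC(\mathbb{F}_{q^3})$ with $\Lambda_3$ via Lemma \ref{le14mar2018}, show that the lines not defined over $\F$ but passing through a point of $PG(2,\F)$ are exactly $(q^3-q^2)$-secants, and use such lines to block every exterior point), but it handles the maximal-secant bound by a genuinely different and purely combinatorial route. The paper bounds $|\ell\cap\cC(\mathbb{F}_{q^3})|\le q^3-q^2$ by invoking $\deg(\cC)=q^3-q^2$ and B\'ezout, so its argument still leans on the algebraic geometry of the curve; you instead compute the full intersection spectrum of every line of $PG(2,\mathbb{F}_{q^3})$ with $\Lambda_3$ (namely $0$, $q^3-q^2-q$, or $q^3-q^2$, according as the line is defined over $\F$, skew to $PG(2,\F)$, or meets it in one point) by a double count of incidences with the $q^2+q+1$ lines of $PG(2,\F)$, using that a point of $\Lambda_2$ lies on a unique $\F$-line and that a non-$\F$-line carries at most one $\F$-point. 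This buys two things: the proof becomes self-contained incidence counting once $\cC(\mathbb{F}_{q^3})=\Lambda_3$ is known, and it yields strictly more information (the complete secant distribution, including the $(q^3-q^2-q)$-secants the paper never mentions). You are also more careful on completeness: the paper's phrase ``the unique line through $Q$ defined over $\mathbb{F}_q$'' tacitly assumes $Q\in\Lambda_2$, whereas you treat $Q\in\Lambda_1$ separately (any of the $q^3-q$ non-$\F$-lines through $Q$ works), which tidies up a small gap in the paper's wording.
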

\begin{proof}
From a combinatorial point of view, $\cC(\mathbb{F}_{q^3})$ consists of all points in $PG(2,\mathbb{F}_{q^3})$ which 
are uncovered by lines defined over $\mathbb{F}_{q}$. Through a point $P$ in $PG(2,\mathbb{F}_{q})$, there are as many as $q^3+1$ lines defined over $\mathbb{F}_{q^3}$. Those of them which are also defined over $\mathbb{F}_q$ are $q+1$, whereas the remaining $q^3-q$ lines defined over $\mathbb{F}_{q^3}$ meet $PG(2,\mathbb{F}_q)$ only in $P$.
Choose one of these $q^3-q$ lines, say $\ell$. Then $\ell$ meets a line $r$ defined over $\mathbb{F}_{q}$ in a point distinct from $P$ if and only if $P\not \in r$. Furthermore, any two lines $r$ and $s$ both defined over $\mathbb{F}_{q}$ meet $\ell$ in two different points whenever $P\not \in r$ and $P\not\in s$. Since the number of lines defined over $\mathbb{F}_{q}$ equals $q^2+q+1$ and $q+1$ of them contain $P$, a counting argument shows that $\ell\cap\cC(\mathbb{F}_{q^3})=q^3-q^2$. On the other hand, since $\deg(\cC)=q^3-q^2$, no line meets $\cC(\mathbb{F}_{q^3})$ in more than $q^3-q^2$ points. Therefore, $\cC(\mathbb{F}_{q^3})$ is a $(k,n)$-arc in $PG(2,\mathbb{F}_{q^3})$ with $k=q^6-q^5-q^4+q^3$ and $n=q^3-q^2$. To show that such a $(k,n)$-arc is complete,
take any point $Q\in PG(2,\mathbb{F}_{q^3})\setminus \cC(\mathbb{F}_{q^3})$. Choose a point in $P\in PG(2,\mathbb{F}_{q})$ not lying on the unique line through $Q$ which is defined over $\mathbb{F}_{q}$. Then the line $\ell$ through $P$ and $Q$ is one of the $q^3-q$ lines defined over $\mathbb{F}_{q^3}$ which meet $PG(2,\mathbb{F}_q)$ only in $P$. As we have seen, $\ell$ meets $\cC(\mathbb{F}_{q^3})$ in exactly $n=q^3-q^2$ points. Since
$\ell$ also contains $Q$, this yields that $Q$ cannot be added to $\cC(\mathbb{F}_{q^3})$ in such a way that the resulting point-set $\cC(\mathbb{F}_{q^3})\cup \{Q\}$ is a $(k+1,n)$-arc. In other words, $\cC(\mathbb{F}_{q^3})$ is complete. 
\end{proof}

\end{document}